
\documentclass[final,leqno]{siamltex}
%\documentclass[1pt,leqno,onefignum,onetabnum]{siamltex1213}
% definitions used by included articles, reproduced here for
% educational benefit, and to minimize alterations needed to be made
% in developing this sample file.

\usepackage[leqno]{amsmath}
\usepackage{graphicx}% Figure
\usepackage{graphics}
\usepackage{epstopdf}
\usepackage{subfigure}% Subfigure
\usepackage{threeparttable}

%==================================================================
% Added by Xiaoli
\usepackage{bm}
\usepackage{amssymb}
\usepackage{leftidx}
\usepackage{empheq}
\usepackage{color}
\usepackage{makecell}
\usepackage{enumerate}
\allowdisplaybreaks

\numberwithin{equation}{section}
\hoffset=-0.cm \voffset=-0.4cm
\textwidth=6.5in\textheight=8.5in

\newtheorem{remark}{Remark}[section]
\newtheorem{example}{Example}[section]
\title{A novel energy-optimal scalar auxiliary variable (EOP-SAV) approach for gradient flows.
        \thanks{
We would like to acknowledge the assistance of volunteers in putting together this example manuscript and supplement. This work is supported by National Natural Science Foundation of China (Grant Nos: 12001336, 12271302, 12131014).}}

      \author{Zhengguang Liu
             \thanks{School of Mathematics and Statistics, Shandong Normal University, Jinan, China. Email: liuzhg@sdnu.edu.cn.}
             \and
         	Yanrong Zhang
         	\thanks{Department of Applied Mathematics, The Hong Kong Polytechnic University, Hung Hom, Hong Kong. Email: yanrongzhang\_math@163.com.}
             \and
             Xiaoli Li
             \thanks{Corresponding author. School of Mathematics, Shandong University, Jinan, Shandong, 250100, China. Email: xiaomath@sdu.edu.cn.}}
\begin{document}
%\UseRawInputEncoding
\maketitle

\begin{abstract}
In recent years, the scalar auxiliary variable (SAV) approach has become very popular and hot in the design of linear, high-order and unconditional energy stable schemes of gradient flow models. However, the nature of SAV-based numerical schemes preserving modified energy dissipation limits its wider application. A relaxation technique to correct the modified energy for the baseline SAV method (RSAV) was proposed by Zhao et al. in \cite{jiang2022improving} and Shen et al. in \cite{zhang2022generalized}. The RSAV approach is unconditionally energy stable with respect to a modified energy that is closer to the original free energy, and provides a much improved accuracy when compared with the SAV approach. In this paper, inspired by the RSAV approach, we propose a novel technique to correct the modified energy of the SAV approach, which can be proved to be an optimal energy approximation.
We construct new high-order implicit-explicit schemes based on the proposed energy-optimal SAV (EOP-SAV) approach. The constructed EOP-SAV schemes not only provide an improved accuracy but also simplify calculation, and can be viewed as the optimal relaxation. We also prove that the numerical schemes based on the EOP-SAV approach are unconditionally energy stable. Compared with the RSAV approach, the proposed EOP-SAV approach does not need introduce any relaxed factors and can share the similar procedure for error estimates.  Several interesting numerical examples have been presented to demonstrate the accuracy and effectiveness of the proposed methods.
\end{abstract}

\begin{keywords}
Scalar auxiliary variable, Gradient flow, Relaxation, Optimal, Error analysis.
\end{keywords}

    \begin{AMS}
         65M12; 35K20; 35K35; 35K55; 65Z05
    \end{AMS}

\pagestyle{myheadings}
\thispagestyle{plain}
\markboth{ZHENGGUANG LIU, YANRONG ZHANG AND XIAOLI LI} {EOP-SAV APPROACH FOR GRADIENT FLOWS}
   %==================================================================
  \section{Introduction}
Gradient flow models are generally derived from the functional variation of free energy. In general, the free energy $E(\phi)$ contains the sum of an integral phase of a nonlinear functional and a quadratic term:
\begin{equation}\label{intro-e1}
E(\phi)=\frac12(\phi,\mathcal{L}\phi)+E_1(\phi)=\frac12(\phi,\mathcal{L}\phi)+\int_\Omega F(\phi)d\textbf{x},
\end{equation}
where $\mathcal{L}$ is a symmetric non-negative linear operator, and $E_1(\phi)=\int_\Omega F(\phi)d\textbf{x}$ is nonlinear free energy. $F(\phi)$ is the energy density function. The gradient flow from the energetic variation of the above energy functional $E(\phi)$ in \eqref{intro-e1} can be obtained as follows:
\begin{equation}\label{intro-e2}
\displaystyle\frac{\partial \phi}{\partial t}=-\mathcal{G}\mu,\quad\mu=\displaystyle\mathcal{L}\phi+F'(\phi),
\end{equation}
where $\mu=\frac{\delta E}{\delta \phi}$ is the chemical potential. $\mathcal{G}$ is a positive operator. For example, $\mathcal{G}=I$ for the $L^2$ gradient flow and $\mathcal{G}=-\Delta$ for the $H^{-1}$ gradient flow.

It is not difficult to find that the above phase field system satisfies the following energy dissipation law:
\begin{equation*}
\frac{d}{dt}E=(\frac{\delta E}{\delta \phi},\frac{\partial\phi}{\partial t})=-(\mathcal{G}\mu,\mu)\leq0,
\end{equation*}
which is a very important property for gradient flows in physics and mathematics.

Many experts and scholars have considered a series of effective numerical calculation methods to maintain the energy stability of the scheme for different gradient flows. In general, a fully explicit format does not preserve the original structure of the system. Completely implicit methods can guarantee the structure of the model, but such methods may require harsh time step limit to ensure the unique solver, and need to solve nonlinear equations at each step, so they are not efficient in practice. In recent years, the widely used methods mainly include convex splitting method \cite{baskaran2013convergence,eyre1998unconditionally}, stabilized method \cite{chen1998applications,shen2010numerical,xu2006stability}, exponential time difference (ETD) method \cite{du2019maximum,du2021maximum,ju2018energy}, invariant energy quadratization (IEQ) method \cite{yang2018linear,yang2017numerical,zhao2017numerical}, Lagrange multiplier method \cite{cheng2018multiple} and
scalar auxiliary variable (SAV) method \cite{cheng2018multiple,shen2018scalar,shen2019new}, including relaxed SAV method (RSAV) \cite{jiang2022improving,zhang2022generalized} et al.. The SAV approach is a new linear algorithm proposed by Shen et al. \cite{shen2018scalar} to construct the unconditional energy stable schemes of gradient flow models. Nowadays, it caused a great deal of heat in the numerical simulation of nonlinear systems. Many experts and scholars have applied the SAV method to various gradient flow problems and obtained satisfactory simulation results. Later, with the in-depth study of scholars, this method was successfully applied to solve various complex nonlinear problems, such as Navier-Stokes equation \cite{lin2019numerical}, Schr$\ddot{o}$dinger equation \cite{antoine2021scalar}, magneto-hydrodynamics (MHD) model \cite{li2022stability} and so on.

Recently, a series of improved SAV algorithms have been proposed on the basis of baseline SAV methods, known as SAV-type methods or SAV-based methods, by changing the definition of auxiliary variables, adding relaxation factors, introducing Lagrange multipliers and so on. These SAV-type algorithms optimize and enrich the SAV method from different angles and have been successfully applied to solve various gradient flow models. For example, Hou and Xu proposed an extended SAV method in \cite{hou2021robust}, which extends the restriction that the free energy has a lower bound by changing the definition of introducing variable $r(t)$. Yang and Dong \cite{yang2020roadmap} proposed a class of generalized constant positive auxiliary variable method, which completely eliminated the hypothetical condition that free energy has a lower bound and expanded the choice of auxiliary variables. We considered an exponential scalar auxiliary variable method (E-SAV) \cite{liu2020exponential} by taking advantage of the non-negative feature of the exponential function. Qiao et al. \cite{ju2022stabilized} proposed a stabilized E-SAV (sESAV) method to simultaneously preserve the energy dissipation law and maximum bound principle (MBP) in discrete settings. Shen et al. \cite{cheng2018multiple} constructed a Lagrange multiplier method to keep the original energy dissipative law. Jiang et al. \cite{jiang2022improving} present a relaxation technique to construct a relaxed SAV (RSAV) approach to improve the accuracy and consistency noticeably.

In this paper, inspired by the RSAV approach in \cite{jiang2022improving} and R-GSAV approach in \cite{zhang2022generalized}, we propose a novel technique to correct the modified energy of the SAV approach, which can be proved to be an optimal energy approximation. Based on this novel technique, we construct second-order Crank-Nicolson and high-order BDF$k$ unconditionally energy stable EOP-SAV numerical schemes. The constructed EOP-SAV schemes can improve the accuracy while maintaining less computational complexity, and can be viewed as the optimal relaxation. We also prove that the numerical schemes based on the EOP-SAV approach are unconditionally energy stable. Compared with the RSAV approach, the proposed EOP-SAV approach does not need introduce any relaxed factors and can share exactly the similar procedure for error estimates.  Several interesting numerical examples have been presented to demonstrate the accuracy and effectiveness of the proposed methods.

The paper is organized as follows. In Sect.2, we first review the SAV-type method including the traditional SAV and relaxed SAV formulations. In Sect.3, we consider a novel EOP-SAV approach based on second-order Crank-Nicloson scheme. We also consider the high-order BDF$k$ unconditionally energy stable EOP-GSAV numerical schemes based on general SAV scheme in Sect.4.  Finally, in Sect.5, some numerical simulations are demonstrated to verify the accuracy and efficiency of our proposed schemes.
\section{A brief review of the SAV-type approach}

In this section, to give our energy-optimal scalar auxiliary variable (EOP-SAV) approach, we first review the traditional SAV method which considered by Shen et.al. in \cite{shen2018scalar} and the relaxed SAV approach proposed in \cite{jiang2022improving} by Jiang et.al..
\subsection{The traditional SAV approach}
Firstly, we need to assume that the nonlinear free energy $E_1(\phi)$ is bounded from below which means $E_1(\phi)=(F(\phi),1)>-C$ for a positive constant $C$. Introduce a scalar variable $R(t)=\sqrt{E_1(\phi)+C}$ and rewrite the gradient flows \eqref{intro-e2} as the following equivalent system:
\begin{equation}\label{SAV-e1}
   \begin{array}{l}
\displaystyle\frac{\partial \phi}{\partial t}=-\mathcal{G}\mu,\\
\displaystyle\mu=\mathcal{L}\phi+\frac{R(t)}{\sqrt{E_1(\phi)+C}}F'(\phi),\\
\displaystyle\frac{dR}{dt}=\frac{1}{2\sqrt{E_1(\phi)+C}}({F'}(\phi),\phi_t).
   \end{array}
\end{equation}
It is not difficult to obtain the following modified energy dissipative law for above equivalent system:
\begin{equation*}
\aligned
\frac{d}{dt}\widetilde{E}(\phi)=-(\mathcal{G}\mu,\mu)\leq0,
\endaligned
\end{equation*}
where the energy $\widetilde{E}(\phi)=\frac12(\phi,\mathcal{L}\phi)+R^2-C$.

Before giving a semi-discrete formulation, we let $N>0$ be a positive integer and set
\begin{equation*}
\Delta t=T/N,\quad t^n=n\Delta t,\quad \text{for}\quad n\leq N.
\end{equation*}
We discretisize the state variable $\phi$ and the introducing variable $R$ implicitly and discretisize the
energy density function $F'(\phi)$ explicitly to obtain the following $k$th-order implicit-explicit (IMEX) schemes:
\begin{equation}\label{SAV-e2}
   \begin{array}{l}
\displaystyle\frac{\alpha_k\phi^{n+1}-A_k(\phi^n)}{\Delta t}=-\mathcal{G}\mu^{n+1},\\
\displaystyle\mu^{n+1}=\mathcal{L}\phi^{n+1}+\frac{R^{n+1}}{\sqrt{E_1(\widehat{\phi}^{n+1})+C}}F'(\widehat{\phi}^{n+1}),\\
\displaystyle\frac{\alpha_kR^{n+1}-A_k(R^n)}{\Delta t}=\frac{1}{2\sqrt{E_1(\widehat{\phi}^{n+1})+C}}({F'}(\widehat{\phi}^{n+1}),\frac{\alpha_k\phi^{n+1}-A_k(\phi^n)}{\Delta t}).
   \end{array}
\end{equation}
Here $\alpha_k$, $A_k$ and $\widehat{\phi}^{n+1}$ are different for $k$th-order schemes. For example, they can be defined as follows:

First-order:
\begin{equation*}
\aligned
\alpha_k=1,\quad A_k(\phi^n)=\phi^n,\quad \widehat{\phi}^{n+1}=\phi^n,
\endaligned
\end{equation*}

Second-order:
\begin{equation*}
\aligned
\alpha_k=\frac32,\quad A_k(\phi^n)=2\phi^n-\frac12\phi^{n-1},\quad \widehat{\phi}^{n+1}=2\phi^n-\phi^{n-1}.
\endaligned
\end{equation*}
For more details, please see \cite{zhang2022generalized}.

The above IMEX numerical schemes \eqref{SAV-e2} is unconditional energy stable with a modified energy $\mathcal{E}^{n}=\frac12(\mathcal{L}\phi^{n},\phi^{n})+|R^n|^2$ for $k=1$ and $\mathcal{E}^{n}=\frac12\left[(\mathcal{L}\phi^{n},\phi^{n})+(2\mathcal{L}\phi^{n}-\mathcal{L}\phi^{n-1},\phi^{n}-\phi^{n-1})\right]+|2R^{n}-R^{n-1}|^2
+|R^{n}|^2$ for $k=2$.
\subsection{The relaxed SAV (RSAV) approach}
The numerical schemes based on the traditional SAV approach preserve a modified energy dissipative law according to the auxiliary variables instead of the original variables. To overcome this issue, Jiang et.al. \cite{jiang2022improving} consider a relaxed technique to update the numerical variable $R^{n+1}$. The numerical schemes resulting from the RSAV method preserve a quite close original energy dissipative law.

Now, we consider the following second-order Crank-Nicolson scheme based on the relaxed SAV (RSAV) approach (RSAV-CN) proposed in \cite{jiang2022improving}: set $R^0=\sqrt{E_1(\phi_0)+C}$, and compute $\phi^{n+1}$, $R^{n+1}$ via the following two steps:

\textbf{Step I}: Compute $\phi^{n+1}$ and $\widetilde{R}^{n+1}$ by the following semi-implicit Crank-Nicolson scheme:
\begin{equation}\label{RSAV-CN-e1}
   \begin{array}{l}
\displaystyle\frac{\phi^{n+1}-\phi^n}{\Delta t}=-\mathcal{G}\mu^{n+\frac12},\\
\displaystyle\mu^{n+\frac12}=\frac12\mathcal{L}\phi^{n+1}+\frac12\mathcal{L}\phi^{n}+\frac{\widetilde{R}^{n+1}+R^n}{2\sqrt{E_1(\widehat{\phi}^{n+1})+C}}F'(\widehat{\phi}^{n+\frac12}),\\
\displaystyle \frac{\widetilde{R}^{n+1}-R^n}{\Delta t}=\displaystyle\frac{1}{2\sqrt{E_1(\widehat{\phi}^{n+1})+C}}\left(F'(\widehat{\phi}^{n+\frac12}),\frac{\phi^{n+1}-\phi^n}{\Delta t}\right).
   \end{array}
\end{equation}
where $\widehat{\phi}^{n+\frac12}=\frac32\phi^n-\frac12\phi^{n-1}$.

\textbf{Step II}: Update the scalar auxiliary variable $R^{n+1}$ via a relaxation step as
\begin{equation}\label{RSAV-CN-e2}
R^{n+1}=\lambda_0\widetilde{R}^{n+1}+(1-\lambda_0)\sqrt{E_1(\phi^{n+1})+C},\quad \lambda_0\in\mathcal{V}.
\end{equation}
Here is $\mathcal{V}$ a set defined by
\begin{equation}\label{RSAV-CN-e3}
\mathcal{V}=\left\{\lambda|\lambda\in[0,1]~s.t.~|R^{n+1}|^2-|\widetilde{R}^{n+1}|^2\leq\Delta t\eta\left(\mathcal{G}\mu^{n+\frac12},\mu^{n+\frac12}\right),\ R^{n+1}=\lambda\widetilde{R}^{n+1}+(1-\lambda)\sqrt{E_1(\phi^{n+1})+C}\right\}.
\end{equation}

We give the following remark to elaborate the optimal choice for the relaxation parameter $\lambda_0$:
\begin{remark}\label{RSAV-rm1}
The optimal choice for the relaxation parameter $\lambda_0$ can be chosen as the solution of the following optimization problem
\begin{equation}\label{RSAV-CN-e4}
\lambda_0=\min\limits_{\lambda\in[0,1]}\lambda~s.t.\quad a\lambda^2+b\lambda+c\leq0,
\end{equation}
where the coefficients are
\begin{eqnarray*}
 a &=&\left(\widetilde{R}^{n+1}-\sqrt{E_1(\phi^{n+1})+C}\right)^2,\\
 b &=&2\left(\widetilde{R}^{n+1}-\sqrt{E_1(\phi^{n+1})+C}\right)\sqrt{E_1(\phi^{n+1})+C},\\
 c &=&E_1(\phi^{n+1})+C-(\widetilde{R}^{n+1})^2-\Delta t\eta\left(\mathcal{G}\mu^{n+\frac12},\mu^{n+\frac12}\right).
\end{eqnarray*}
If $a=0$, then we set $\lambda_0=0$ to make $R^{n+1}=\widetilde{R}^{n+1}=\sqrt{E_1(\phi^{n+1})+C}$. If $a\neq0$, the solution to \eqref{RSAV-CN-e4} is given as
\begin{equation*}
\lambda_0=\max\{0,\frac{-b-\sqrt{b^2-4ac}}{2a}\}.
\end{equation*}
\end{remark}
\section{The energy-optimal SAV (EOP-SAV) approach}
In this section, we will consider a novel modified SAV method, named EOP-SAV approach which is unconditionally energy stable with respect to a modified energy that is closer to the original free energy than the baseline SAV and RSAV approaches, and provides an improved accuracy. We can also prove the considered method is an optimal technique to modify the dissipative law of the SAV method. The core idea of the RSAV approach is to find a relaxed technique to modify $R^{n+1}$ by the weighted sum of $\widetilde{R}^{n+1}$ and $\sqrt{E_1(\phi^{n+1})+C}$. Actually, in order to make the modified energy of the SAV method as close as possible to the original energy, we only need to modify $(R^{n+1})^2$ to be as close to $\sqrt{E_1(\phi^{n+1})+C}$ as possible. Meanwhile, we also need the modified $(R^{n+1})^2$ to satisfy the dissipative law.
\subsection{The second-order EOP-SAV/CN scheme}
Firstly, we consider the following second-order Crank-Nicolson scheme based on the new described EOP-SAV approach: set $R^0=\sqrt{E_1(\phi_0)+C}$, $\mathcal{E}_1(\phi^n)=E_1(\phi^{n})+C$ and compute $\phi^{n+1}$, $R^{n+1}$ via the following two steps:

\textbf{Step I}: Compute $\phi^{n+1}$ and $\widetilde{R}^{n+1}$ by the following semi-implicit Crank-Nicolson scheme:
\begin{equation}\label{emSAV-CN-e1}
   \begin{array}{l}
\displaystyle\frac{\phi^{n+1}-\phi^n}{\Delta t}=-\mathcal{G}\mu^{n+\frac12},\\
\displaystyle\mu^{n+\frac12}=\frac12\mathcal{L}\phi^{n+1}+\frac12\mathcal{L}\phi^{n}+\frac{\widetilde{R}^{n+1}+R^n}{2\sqrt{\mathcal{E}_1(\widehat{\phi}^{n+\frac12})}}F'(\widehat{\phi}^{n+\frac12}),\\
\displaystyle \frac{\widetilde{R}^{n+1}-R^n}{\Delta t}=\displaystyle\frac{1}{2\sqrt{\mathcal{E}_1(\widehat{\phi}^{n+\frac12})}}\left(F'(\widehat{\phi}^{n+\frac12}),\frac{\phi^{n+1}-\phi^n}{\Delta t}\right).
   \end{array}
\end{equation}
where $\widehat{\phi}^{n+\frac12}=\frac32\phi^n-\frac12\phi^{n-1}$.

\textbf{Step II}: Update the scalar auxiliary variable $R^{n+1}$ as
\begin{equation}\label{emSAV-CN-e2}
R^{n+1}=\min\left\{s^{n+1},\sqrt{\mathcal{E}_1(\phi^{n+1})}\right\},
\end{equation}
where $s^{n+1}=\sqrt{\frac12(\mathcal{L}\phi^{n},\phi^{n})-\frac12(\mathcal{L}\phi^{n+1},\phi^{n+1})+|R^{n}|^2}$ can be obtained easily.

\begin{theorem}\label{emsav-th1}
The update technique of $R^{n+1}$ in \textbf{Step II} \eqref{emSAV-CN-e2} is the optimal choice to  modify the energy dissipative law of the SAV method.
\end{theorem}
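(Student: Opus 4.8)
The plan is to make the word ``optimal'' precise and then reduce the claim to an elementary one-dimensional projection. By ``optimal'' I mean the following: among all updates $R^{n+1}$ for which the modified discrete energy $\mathcal{E}^{n+1}:=\frac12(\mathcal{L}\phi^{n+1},\phi^{n+1})+|R^{n+1}|^2$ still obeys the dissipation law $\mathcal{E}^{n+1}\le\mathcal{E}^{n}$, the choice \eqref{emSAV-CN-e2} is the one that minimizes the deviation of $\mathcal{E}^{n+1}$ from the shifted original energy $E(\phi^{n+1})+C=\frac12(\mathcal{L}\phi^{n+1},\phi^{n+1})+\mathcal{E}_1(\phi^{n+1})$; equivalently, it minimizes $\bigl||R^{n+1}|^2-\mathcal{E}_1(\phi^{n+1})\bigr|$.

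First I would record the energy identity produced by \textbf{Step I}. Pairing the first equation of \eqref{emSAV-CN-e1} with $\Delta t\,\mu^{n+\frac12}$, using the symmetry of $\mathcal{L}$ to rewrite $\frac12(\phi^{n+1}-\phi^{n},\mathcal{L}\phi^{n+1}+\mathcal{L}\phi^{n})$ as $\frac12(\mathcal{L}\phi^{n+1},\phi^{n+1})-\frac12(\mathcal{L}\phi^{n},\phi^{n})$, and inserting the third equation of \eqref{emSAV-CN-e1} so that the nonlinear term collapses to $(\widetilde R^{n+1}+R^{n})(\widetilde R^{n+1}-R^{n})=|\widetilde R^{n+1}|^2-|R^{n}|^2$, one obtains
\[
\frac12(\mathcal{L}\phi^{n+1},\phi^{n+1})+|\widetilde R^{n+1}|^2=\frac12(\mathcal{L}\phi^{n},\phi^{n})+|R^{n}|^2-\Delta t\,(\mathcal{G}\mu^{n+\frac12},\mu^{n+\frac12}).
\]
Comparing with the definition $|s^{n+1}|^2=\frac12(\mathcal{L}\phi^{n},\phi^{n})-\frac12(\mathcal{L}\phi^{n+1},\phi^{n+1})+|R^{n}|^2$ then gives $|s^{n+1}|^2=|\widetilde R^{n+1}|^2+\Delta t\,(\mathcal{G}\mu^{n+\frac12},\mu^{n+\frac12})\ge0$, so $s^{n+1}$ is well defined, real and nonnegative.

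Next I would convert the dissipation requirement into a constraint on the single scalar unknown $x:=|R^{n+1}|^2$. Since $\mathcal{E}^{n}=\frac12(\mathcal{L}\phi^{n},\phi^{n})+|R^{n}|^2$, the inequality $\mathcal{E}^{n+1}\le\mathcal{E}^{n}$ is equivalent to $x\le\frac12(\mathcal{L}\phi^{n},\phi^{n})-\frac12(\mathcal{L}\phi^{n+1},\phi^{n+1})+|R^{n}|^2=|s^{n+1}|^2$, so the admissible values of $x$ form exactly the interval $[0,|s^{n+1}|^2]$ (nonempty by the previous step). The optimality problem then reads: minimize $\bigl|x-\mathcal{E}_1(\phi^{n+1})\bigr|$ over $x\in[0,|s^{n+1}|^2]$. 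Since $\mathcal{E}_1(\phi^{n+1})=E_1(\phi^{n+1})+C>0$ by the lower-bound assumption, the unconstrained minimizer $\mathcal{E}_1(\phi^{n+1})$ is already nonnegative, and its metric projection onto $[0,|s^{n+1}|^2]$ is $x^{\star}=\min\{|s^{n+1}|^2,\mathcal{E}_1(\phi^{n+1})\}$. As $t\mapsto\sqrt{t}$ is increasing, this yields $R^{n+1}=\sqrt{x^{\star}}=\min\{s^{n+1},\sqrt{\mathcal{E}_1(\phi^{n+1})}\}$, i.e.\ exactly \eqref{emSAV-CN-e2}; and because $x^{\star}\le|s^{n+1}|^2$, this update indeed keeps $\mathcal{E}^{n+1}\le\mathcal{E}^{n}$, with $R^{n+1}=\sqrt{\mathcal{E}_1(\phi^{n+1})}$ (so the modified energy coincides with the shifted original energy) whenever $\mathcal{E}_1(\phi^{n+1})\le|s^{n+1}|^2$.

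The algebra behind the energy identity and the projection step are routine; the delicate point is phrasing the optimality criterion so that the statement is both meaningful and sharp. In the write-up I would stress that no admissible update can push $|R^{n+1}|^2$ above $|s^{n+1}|^2$ without violating dissipation, so that when $\mathcal{E}_1(\phi^{n+1})>|s^{n+1}|^2$ the residual gap $\mathcal{E}_1(\phi^{n+1})-|s^{n+1}|^2$ cannot be removed — this is what makes \eqref{emSAV-CN-e2} optimal rather than merely feasible, and it is exactly where the RSAV update, whose relaxation parameter $\eta\in(0,1)$ shrinks the admissible interval to $[0,|\widetilde R^{n+1}|^2+\Delta t\,\eta(\mathcal{G}\mu^{n+\frac12},\mu^{n+\frac12})]$, can leave a strictly larger gap.
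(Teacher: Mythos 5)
Your proposal is correct and follows essentially the same route as the paper's proof: the same energy identity from \textbf{Step I}, the same observation that dissipation forces $|R^{n+1}|^2\le (s^{n+1})^2$ so that $(s^{n+1})^2$ is the least admissible upper bound, and the same conclusion that $\min\{s^{n+1},\sqrt{\mathcal{E}_1(\phi^{n+1})}\}$ is the closest admissible value to $\sqrt{\mathcal{E}_1(\phi^{n+1})}$. Your version is somewhat tighter in that it states the optimality criterion explicitly as a metric projection onto $[0,(s^{n+1})^2]$ and verifies nonnegativity of $(s^{n+1})^2$ directly from the identity rather than from the a posteriori inequality, but these are refinements of the paper's argument, not a different one.
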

\begin{proof}
Taking the inner products of \eqref{emSAV-CN-e1} with $\mu^{n+\frac12}$, $-\frac{\phi^{n+1}-\phi^n}{\Delta t}$ and $\frac{\widetilde{R}^{n+1}+R^n}{2}$ respectively,
we obtain immediately
\begin{equation}\label{emSAV-CN-e3}
\aligned
\displaystyle \left[\frac12(\mathcal{L}\phi^{n+1},\phi^{n+1})+|\widetilde{R}^{n+1}|^2\right]-\left[\frac12(\mathcal{L}\phi^{n},\phi^{n})+|R^{n}|^2\right]=-\Delta t(\mathcal{G}\mu^{n+\frac12},\mu^{n+\frac12})\leq0.
\endaligned
\end{equation}
Obviously, if we modify $R^{n+1}$, the following inequality will be hold:
\begin{equation}\label{emSAV-CN-e4}
\aligned
\displaystyle \left[\frac12(\mathcal{L}\phi^{n+1},\phi^{n+1})+|R^{n+1}|^2\right]-\left[\frac12(\mathcal{L}\phi^{n},\phi^{n})+|R^{n}|^2\right]\leq0.
\endaligned
\end{equation}
We immediately obtain the following inequality
\begin{equation}\label{emSAV-CN-e5}
\aligned
0\leq|R^{n+1}|^2\leq\frac12(\mathcal{L}\phi^{n},\phi^{n})-\frac12(\mathcal{L}\phi^{n+1},\phi^{n+1})+|R^{n}|^2,
\endaligned
\end{equation}
It means that $\frac12(\mathcal{L}\phi^{n},\phi^{n})-\frac12(\mathcal{L}\phi^{n+1},\phi^{n+1})+|R^{n}|^2$ is the least upper bound on $|R^{n+1}|^2$ to satisfy the energy dissipative law.

From \eqref{emSAV-CN-e5}, we obtain $\frac12(\mathcal{L}\phi^{n},\phi^{n})-\frac12(\mathcal{L}\phi^{n+1},\phi^{n+1})+|R^{n}|^2\geq0$. Setting $$s^{n+1}=\sqrt{\frac12(\mathcal{L}\phi^{n},\phi^{n})-\frac12(\mathcal{L}\phi^{n+1},\phi^{n+1})+|R^{n}|^2},$$ we are easy to obtain :

(1). If $\displaystyle s^{n+1}\geq\sqrt{\mathcal{E}_1(\phi^{n+1})}$, we update $R^{n+1}$ by $R^{n+1}=\sqrt{\mathcal{E}_1(\phi^{n+1})}$. It means the modified energy is totally equal to the original energy. Obviously $R^{n+1}=\sqrt{\mathcal{E}_1(\phi^{n+1})}$ is an optimal choice.

(2). If $\displaystyle s^{n+1}<\sqrt{\mathcal{E}_1(\phi^{n+1})}$, we update $R^{n+1}$ by $\displaystyle R^{n+1}=s^{n+1}$.
Noting that $\frac12(\mathcal{L}\phi^{n},\phi^{n})-\frac12(\mathcal{L}\phi^{n+1},\phi^{n+1})+|R^{n}|^2=(s^{n+1})^2$ is the least upper bound on $|R^{n+1}|^2$, thus it is the closest real number to $\mathcal{E}_1(\phi^{n+1})$ which means it is an optimal choice to update $R^{n+1}$.
\end{proof}
\begin{theorem}\label{emSAV-CN-th1}
The second-order EOP-SAV/CN scheme \eqref{emSAV-CN-e1}-\eqref{emSAV-CN-e2} is unconditionally energy stable in the sense that
\begin{equation}\label{emSAV-CN-e6}
\mathcal{\widetilde{E}}(\phi^{n+1})-\mathcal{\widetilde{E}}(\phi^{n})\leq0,
\end{equation}
where $\mathcal{\widetilde{E}}(\phi^{n+1})=\frac12(\mathcal{L}\phi^{n+1},\phi^{n+1})+|R^{n+1}|^2-C$ and we further have the following original dissipative law:
\begin{equation*}
\aligned
\mathcal{E}(\phi^{n+1})\leq\mathcal{E}(\phi^{n}),
\endaligned
\end{equation*}
under the condition of $s^{n+1}\geq\sqrt{\mathcal{E}_1(\phi^{n+1})}$. Here $\mathcal{E}(\phi^{n+1})=\frac12(\mathcal{L}\phi^{n+1},\phi^{n+1})+E_1(\phi^{n+1})$ is the original energy.
\end{theorem}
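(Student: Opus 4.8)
The plan is to reduce everything to the energy identity \eqref{emSAV-CN-e3} already established in the proof of Theorem~\ref{emsav-th1}, namely
$\bigl[\tfrac12(\mathcal{L}\phi^{n+1},\phi^{n+1})+|\widetilde{R}^{n+1}|^2\bigr]-\bigl[\tfrac12(\mathcal{L}\phi^{n},\phi^{n})+|R^{n}|^2\bigr]=-\Delta t(\mathcal{G}\mu^{n+\frac12},\mu^{n+\frac12})\le 0$,
and then to control $|R^{n+1}|^2$ directly through the explicit formula in \textbf{Step II}. The point is that all the analytic content has already been extracted in Theorem~\ref{emsav-th1}; what remains is bookkeeping with the definition of $R^{n+1}$.

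First I would observe that, by the very definition \eqref{emSAV-CN-e2}, one has $R^{n+1}=\min\{s^{n+1},\sqrt{\mathcal{E}_1(\phi^{n+1})}\}\le s^{n+1}$ in both branches, so that
$|R^{n+1}|^2\le (s^{n+1})^2=\tfrac12(\mathcal{L}\phi^{n},\phi^{n})-\tfrac12(\mathcal{L}\phi^{n+1},\phi^{n+1})+|R^{n}|^2$;
here the nonnegativity of the right-hand side, needed for $s^{n+1}$ to be well defined, is exactly \eqref{emSAV-CN-e5}, which follows from \eqref{emSAV-CN-e3}. Rearranging this inequality and subtracting the constant $C$ yields
$\tfrac12(\mathcal{L}\phi^{n+1},\phi^{n+1})+|R^{n+1}|^2-C\le \tfrac12(\mathcal{L}\phi^{n},\phi^{n})+|R^{n}|^2-C$,
which is precisely the modified energy dissipation \eqref{emSAV-CN-e6}.

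For the original energy law I would use that the $\min$ in \eqref{emSAV-CN-e2} also forces $R^{m}\le\sqrt{\mathcal{E}_1(\phi^{m})}$ for every index $m$, hence $|R^m|^2\le\mathcal{E}_1(\phi^m)=E_1(\phi^m)+C$ and therefore $\mathcal{\widetilde{E}}(\phi^m)\le \mathcal{E}(\phi^m)$ unconditionally. Under the stated assumption $s^{n+1}\ge\sqrt{\mathcal{E}_1(\phi^{n+1})}$ the minimum is attained at its second argument, i.e. $R^{n+1}=\sqrt{\mathcal{E}_1(\phi^{n+1})}$, so $|R^{n+1}|^2=E_1(\phi^{n+1})+C$ and thus $\mathcal{\widetilde{E}}(\phi^{n+1})=\mathcal{E}(\phi^{n+1})$ with equality. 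Chaining these facts with the modified energy decay just proved gives
$\mathcal{E}(\phi^{n+1})=\mathcal{\widetilde{E}}(\phi^{n+1})\le\mathcal{\widetilde{E}}(\phi^{n})\le\mathcal{E}(\phi^{n})$,
which is the claimed original dissipative law.

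I do not anticipate any genuine obstacle. The only step meriting a little care is to notice that the comparison between the modified and original energies at the \emph{previous} level $n$ comes for free from $|R^n|^2\le\mathcal{E}_1(\phi^n)$, so no additional hypothesis at step $n$ is required for the chain of inequalities to close; and that, when verifying $s^{n+1}$ is real, one must invoke \eqref{emSAV-CN-e5} rather than assume it, tracing it back to the identity \eqref{emSAV-CN-e3}.
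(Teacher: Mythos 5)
Your proof is correct and follows essentially the same route as the paper's: both derive the modified energy decay from $|R^{n+1}|^2\le (s^{n+1})^2$ together with the identity \eqref{emSAV-CN-e3}, obtain $\mathcal{\widetilde{E}}(\phi^{m})\le\mathcal{E}(\phi^{m})$ from the second argument of the $\min$, and close the chain $\mathcal{E}(\phi^{n+1})=\mathcal{\widetilde{E}}(\phi^{n+1})\le\mathcal{\widetilde{E}}(\phi^{n})\le\mathcal{E}(\phi^{n})$ under the stated condition. Your explicit remark that the well-definedness of $s^{n+1}$ rests on \eqref{emSAV-CN-e5}, traced back to \eqref{emSAV-CN-e3}, is a small point the paper leaves implicit in this proof (it appears only in the proof of Theorem~\ref{emsav-th1}), but it is not a different argument.
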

\begin{proof}
From the equation in \textbf{Step II} of the EOP-SAV/CN scheme \eqref{emSAV-CN-e2}, we immediately obtain
\begin{equation}\label{emSAV-CN-e7}
\aligned
|R^{n+1}|^2\leq (s^{n+1})^2=\frac12(\mathcal{L}\phi^{n},\phi^{n})-\frac12(\mathcal{L}\phi^{n+1},\phi^{n+1})+|R^{n}|^2,
\endaligned
\end{equation}
which means $\mathcal{\widetilde{E}}(\phi^{n+1})-\mathcal{\widetilde{E}}(\phi^{n})\leq0$.

From equation \eqref{emSAV-CN-e2}, we also obtain $|R^{n+1}|^2\leq\mathcal{E}_1(\phi^{n+1})$, then the following inequality is satisfied:
\begin{equation}\label{emSAV-CN-e8}
|R^{n+1}|^2-C\leq E_1(\phi^{n+1}),
\end{equation}
which means $\mathcal{\widetilde{E}}(\phi^{n+1})\leq\mathcal{E}(\phi^{n+1}).$ Noticing that $\mathcal{\widetilde{E}}(\phi^{0})=\mathcal{E}(\phi^{0})$ and from above inequality, we can immediately obtain that
\begin{equation*}
\aligned
\mathcal{\widetilde{E}}(\phi^{n})\leq\mathcal{E}(\phi^{n}),\quad \forall n\geq0.
\endaligned
\end{equation*}
Specially, if $s^{n+1}\geq\sqrt{\mathcal{E}_1(\phi^{n+1})}$, we get $|R^{n+1}|^2=\mathcal{E}_1(\phi^{n+1})$. Then the following equation will hold:
\begin{equation*}
\aligned
\mathcal{E}(\phi^{n+1})=\mathcal{\widetilde{E}}(\phi^{n+1}).
\endaligned
\end{equation*}
Thus, we could have the following original dissipative law:
\begin{equation}\label{emSAV-CN-e9}
\aligned
\mathcal{E}(\phi^{n+1})=\mathcal{\widetilde{E}}(\phi^{n+1})\leq\mathcal{\widetilde{E}}(\phi^{n})\leq\mathcal{E}(\phi^{n}), \quad \text{for}\quad s^{n+1}\geq\sqrt{\mathcal{E}_1(\phi^{n+1})},
\endaligned
\end{equation}
which completes the proof.
\end{proof}
\begin{theorem}
The modified energy $\mathcal{\widetilde{E}}(\phi^{n+1})$ in the proposed EOP-SAV/CN scheme \eqref{emSAV-CN-e1}-\eqref{emSAV-CN-e2} is optimal from a relaxation point of view.
\end{theorem}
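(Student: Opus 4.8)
The plan is to deduce the statement from Theorem~\ref{emsav-th1} and Theorem~\ref{emSAV-CN-th1} (and to connect it to Remark~\ref{RSAV-rm1}) by checking that the EOP-SAV update \eqref{emSAV-CN-e2} is itself an admissible relaxation of the type \eqref{RSAV-CN-e2}, and then invoking the least-upper-bound property of $(s^{n+1})^2$ already established. First I would record the elementary bound $\widetilde{R}^{n+1}\le s^{n+1}$: taking the inner products as in the proof of Theorem~\ref{emsav-th1} gives identity \eqref{emSAV-CN-e3}, hence $|\widetilde{R}^{n+1}|^2=(s^{n+1})^2-\Delta t(\mathcal{G}\mu^{n+\frac12},\mu^{n+\frac12})\le (s^{n+1})^2$, and therefore $\widetilde{R}^{n+1}\le|\widetilde{R}^{n+1}|\le s^{n+1}$.

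Next I would show that $R^{n+1}_{\star}:=\min\{s^{n+1},\sqrt{\mathcal{E}_1(\phi^{n+1})}\}$ is of relaxation form, i.e. that there exists $\lambda^{\star}\in[0,1]$ with $R^{n+1}_{\star}=\lambda^{\star}\widetilde{R}^{n+1}+(1-\lambda^{\star})\sqrt{\mathcal{E}_1(\phi^{n+1})}$. If $s^{n+1}\ge\sqrt{\mathcal{E}_1(\phi^{n+1})}$ this holds with $\lambda^{\star}=0$, and the modified energy then coincides with the original energy, which is trivially optimal. If $s^{n+1}<\sqrt{\mathcal{E}_1(\phi^{n+1})}$, then the chain $\widetilde{R}^{n+1}\le s^{n+1}<\sqrt{\mathcal{E}_1(\phi^{n+1})}$ from the previous step shows that $s^{n+1}$ lies on the segment joining $\widetilde{R}^{n+1}$ and $\sqrt{\mathcal{E}_1(\phi^{n+1})}$, so
\[
\lambda^{\star}=\frac{\sqrt{\mathcal{E}_1(\phi^{n+1})}-s^{n+1}}{\sqrt{\mathcal{E}_1(\phi^{n+1})}-\widetilde{R}^{n+1}}\in(0,1],
\]
where the denominator is nonzero because $\widetilde{R}^{n+1}=\sqrt{\mathcal{E}_1(\phi^{n+1})}$ would force $s^{n+1}\ge\widetilde{R}^{n+1}=\sqrt{\mathcal{E}_1(\phi^{n+1})}$, contradicting the case assumption. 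Thus in all cases \eqref{emSAV-CN-e2} is a particular instance of the relaxation \eqref{RSAV-CN-e2}, and by Theorem~\ref{emSAV-CN-th1} it satisfies the modified-energy dissipation law.

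Finally I would compare $R^{n+1}_{\star}$ with an arbitrary admissible relaxation. For a relaxation-form update $R=\lambda\widetilde{R}^{n+1}+(1-\lambda)\sqrt{\mathcal{E}_1(\phi^{n+1})}$, the energy-stability requirement $\frac12(\mathcal{L}\phi^{n+1},\phi^{n+1})+|R|^2\le\frac12(\mathcal{L}\phi^{n},\phi^{n})+|R^n|^2$ is, by the definition of $s^{n+1}$, exactly $|R|^2\le(s^{n+1})^2$, while the deviation of the resulting modified energy from the original energy equals $\mathcal{E}(\phi^{n+1})-\mathcal{\widetilde{E}}(\phi^{n+1})=\mathcal{E}_1(\phi^{n+1})-|R|^2$. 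Hence making the modified energy as close as possible to the original one means bringing $|R|^2$ as close as possible to $\mathcal{E}_1(\phi^{n+1})$ under the constraint $|R|^2\le(s^{n+1})^2$, whose best attainable value is $\min\{(s^{n+1})^2,\mathcal{E}_1(\phi^{n+1})\}=|R^{n+1}_{\star}|^2$ by the least-upper-bound argument used in the proof of Theorem~\ref{emsav-th1}; since $R^{n+1}_{\star}$ is itself admissible by the previous paragraph, this optimum is actually attained by the EOP-SAV update. Equivalently, in the language of Remark~\ref{RSAV-rm1}, $R^{n+1}_{\star}$ solves $\min_{\lambda\in[0,1]}\lambda$ subject to $|R|^2\le(s^{n+1})^2$, i.e. it is the relaxation with the smallest admissible weight on $\widetilde{R}^{n+1}$, so $\mathcal{\widetilde{E}}(\phi^{n+1})$ is the relaxation-type modified energy closest to $\mathcal{E}(\phi^{n+1})$, which is the asserted optimality. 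The only delicate point is the realizability check of the second paragraph — namely that $R^{n+1}_{\star}$ genuinely lies on the relaxation segment $[\widetilde{R}^{n+1},\sqrt{\mathcal{E}_1(\phi^{n+1})}]$, which rests entirely on the inequality $\widetilde{R}^{n+1}\le s^{n+1}$ obtained from \eqref{emSAV-CN-e3}; once this is in hand, the optimality reduces to the least-upper-bound property of $(s^{n+1})^2$ proved earlier, and edge cases (the sign of $\widetilde{R}^{n+1}$ and degenerate denominators) are handled as above.
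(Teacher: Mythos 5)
Your proposal is correct and follows essentially the same route as the paper's own proof: the same case split on whether $s^{n+1}\geq\sqrt{\mathcal{E}_1(\phi^{n+1})}$, the same realizability of \eqref{emSAV-CN-e2} as a relaxation with $\lambda=0$ or $\lambda\in(0,1]$, and the same appeal to $(s^{n+1})^2$ being the least upper bound on $|R^{n+1}|^2$ compatible with dissipation. Your version is in fact slightly more careful than the paper's, since you explicitly derive $\widetilde{R}^{n+1}\leq s^{n+1}$ from \eqref{emSAV-CN-e3} and check the nondegeneracy of the denominator in the formula for $\lambda^{\star}$, both of which the paper only asserts implicitly.
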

\begin{proof}
If $\displaystyle\sqrt{\mathcal{E}_1(\phi^{n+1})}\leq s^{n+1}$,
we have
$$R^{n+1}=\sqrt{\mathcal{E}_1(\phi^{n+1})},$$
which means there is a relaxed factor $\lambda=0$ to let
$$R^{n+1}=\lambda s^{n+1}+(1-\lambda)\sqrt{\mathcal{E}_1(\phi^{n+1})}=\lambda \widetilde{R}^{n+1}+(1-\lambda)\sqrt{\mathcal{E}_1(\phi^{n+1})}.$$ In this case, we obtain
$$\mathcal{\widetilde{E}}(\phi^{n+1})=\frac12(\mathcal{L}\phi^{n+1},\phi^{n+1})+|R^{n+1}|^2-C=\frac12(\mathcal{L}\phi^{n+1},\phi^{n+1})+\mathcal{E}_1(\phi^{n+1})-C=\mathcal{E}(\phi^{n+1}).$$
If $\displaystyle\sqrt{\mathcal{E}_1(\phi^{n+1})}>s^{n+1}$,
we have
$$R^{n+1}=\min\left\{s^{n+1},\sqrt{\mathcal{E}_1(\phi^{n+1})}\right\}=s^{n+1}.$$
Noting that $s^{n+1}\geq\widetilde{R}^{n+1}$ which means
$$\widetilde{R}^{n+1}\leq R^{n+1}<\sqrt{\mathcal{E}_1(\phi^{n+1})},$$
then we immediately obtain that there is a constant $\lambda\in(0,1]$ to satisfy
$$R^{n+1}=\lambda\widetilde{R}^{n+1}+(1-\lambda)\sqrt{\mathcal{E}_1(\phi^{n+1})}.$$ In this case, we have
$$\mathcal{\widetilde{E}}(\phi^{n+1})=\frac12(\mathcal{L}\phi^{n+1},\phi^{n+1})+|R^{n+1}|^2-C<\mathcal{E}(\phi^{n+1}).$$
Notice that $|R^{n+1}|^2=(s^{n+1})^2=\frac12(\mathcal{L}\phi^{n},\phi^{n})-\frac12(\mathcal{L}\phi^{n+1},\phi^{n+1})+|R^{n}|^2$ is the maximum that satisfies the energy dissipative law, thus $\mathcal{\widetilde{E}}(\phi^{n+1})$ is the closest value to $\mathcal{E}(\phi^{n+1})$.
\end{proof}
\begin{theorem}\label{emsav-th2}
The proposed EOP-SAV/CN scheme \eqref{emSAV-CN-e1}-\eqref{emSAV-CN-e2} is second-order accurate in time. Specially, $R^{n+1}=\sqrt{\mathcal{E}_1(\phi(\textbf{x},t^{n+1}))}+O(\Delta t^2)$, it means the modified step in \eqref{emSAV-CN-e2} does not affect the order of accuracy in time.
\end{theorem}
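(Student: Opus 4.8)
The plan is to establish two facts: first, that the EOP-SAV/CN scheme reduces to a consistent discretization of the original gradient flow up to $O(\Delta t^2)$ (using standard truncation-error analysis for the Crank-Nicolson-type discretization in \eqref{emSAV-CN-e1}); and second, that the modified update \eqref{emSAV-CN-e2} does not spoil this, i.e. $R^{n+1}=\sqrt{\mathcal{E}_1(\phi(\textbf{x},t^{n+1}))}+O(\Delta t^2)$. The second claim is really the heart of the statement, since the first-stage scheme \eqref{emSAV-CN-e1} is by now a standard SAV/CN discretization whose second-order consistency follows from Taylor expansion around $t^{n+1/2}$ together with the fact that $\widehat{\phi}^{n+\frac12}=\frac32\phi^n-\frac12\phi^{n-1}$ is a second-order extrapolation of $\phi(t^{n+1/2})$.

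For the key estimate on $R^{n+1}$, I would argue as follows. From the third equation of \eqref{emSAV-CN-e1} and the continuous identity $\frac{dR}{dt}=\frac{1}{2\sqrt{\mathcal{E}_1(\phi)}}(F'(\phi),\phi_t)$, a truncation-error argument gives $\widetilde{R}^{n+1}=\sqrt{\mathcal{E}_1(\phi(t^{n+1}))}+O(\Delta t^2)$; equivalently $(\widetilde{R}^{n+1})^2=\mathcal{E}_1(\phi^{n+1})+O(\Delta t^2)$, since $\widetilde{R}^{n+1}$ and $\sqrt{\mathcal{E}_1(\phi^{n+1})}$ are both bounded away from zero (using $\mathcal{E}_1\ge 0$ and, if needed, the coercivity/regularity assumptions implicit in the SAV framework). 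Next, from \eqref{emSAV-CN-e3} we have $(\widetilde{R}^{n+1})^2=(s^{n+1})^2+\Delta t(\mathcal{G}\mu^{n+\frac12},\mu^{n+\frac12})$, and the dissipation term $\Delta t(\mathcal{G}\mu^{n+\frac12},\mu^{n+\frac12})$ is $O(\Delta t)$ times a bounded quantity — in fact, since $\mu^{n+1/2}=\mu(t^{n+1/2})+O(\Delta t^2)$ and the continuous solution satisfies $(\mathcal{G}\mu,\mu)=O(1)$, this term is $O(\Delta t)$. Hence $(s^{n+1})^2=\mathcal{E}_1(\phi^{n+1})+O(\Delta t)$. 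This only gives $O(\Delta t)$, which is not enough by itself, so the refinement is to use the fact that $\phi^{n+1}-\phi^n=O(\Delta t)$, which forces $(\mathcal{G}\mu^{n+\frac12},\mu^{n+\frac12})$ and hence $\Delta t(\mathcal{G}\mu^{n+\frac12},\mu^{n+\frac12})=O(\Delta t)$; combined with $\sqrt{\mathcal{E}_1(\phi^{n+1})}>0$ bounded below, taking square roots shows $s^{n+1}=\sqrt{\mathcal{E}_1(\phi^{n+1})}+O(\Delta t^{1/2})$ — still not sharp. The sharp route is instead: since $(\widetilde R^{n+1})^2 = \mathcal{E}_1(\phi^{n+1}) + O(\Delta t^2)$ and $(\widetilde R^{n+1})^2 \le (s^{n+1})^2$, while also $(s^{n+1})^2 = (\widetilde R^{n+1})^2 + \Delta t (\mathcal{G}\mu^{n+1/2},\mu^{n+1/2})$, one shows directly that $\mathcal{E}_1(\phi^{n+1}) - (s^{n+1})^2 \le \mathcal{E}_1(\phi^{n+1}) - (\widetilde R^{n+1})^2 = O(\Delta t^2)$, and on the other side that $(s^{n+1})^2 - \mathcal{E}_1(\phi^{n+1})$, when positive, is controlled because in that regime $R^{n+1}=\sqrt{\mathcal{E}_1(\phi^{n+1})}$ exactly. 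Thus $|R^{n+1}|^2 - \mathcal{E}_1(\phi^{n+1})| = O(\Delta t^2)$ in both branches of \eqref{emSAV-CN-e2}, and taking square roots (again using the lower bound on $\sqrt{\mathcal{E}_1}$) yields $R^{n+1}=\sqrt{\mathcal{E}_1(\phi(\textbf{x},t^{n+1}))}+O(\Delta t^2)$.

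Having established this, I would finish by substituting $R^{n+1}=\sqrt{\mathcal{E}_1(\phi^{n+1})}+O(\Delta t^2)$ (and likewise $\widetilde R^{n+1}, R^n$) back into the first two equations of \eqref{emSAV-CN-e1}: the factor $\frac{\widetilde R^{n+1}+R^n}{2\sqrt{\mathcal{E}_1(\widehat\phi^{n+1/2})}}$ then equals $1+O(\Delta t^2)$, so $\mu^{n+1/2}$ coincides with the Crank-Nicolson chemical potential $\frac12\mathcal{L}(\phi^{n+1}+\phi^n)+F'(\widehat\phi^{n+1/2})$ up to $O(\Delta t^2)$, and the scheme is a genuine second-order IMEX discretization of \eqref{intro-e2}. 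A standard consistency-plus-stability (or the error-analysis procedure alluded to in the introduction, shared with RSAV) argument then gives the global second-order convergence.

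The main obstacle is precisely the sharpness of the estimate $(s^{n+1})^2 = \mathcal{E}_1(\phi^{n+1}) + O(\Delta t^2)$ in the branch $s^{n+1} < \sqrt{\mathcal{E}_1(\phi^{n+1})}$: the naive bound on the dissipation term $\Delta t(\mathcal{G}\mu^{n+1/2},\mu^{n+1/2})$ only gives $O(\Delta t)$, and one must instead exploit the identity $(s^{n+1})^2 = (\widetilde R^{n+1})^2 + \Delta t(\mathcal{G}\mu^{n+1/2},\mu^{n+1/2})$ together with the already-known second-order accuracy of $\widetilde R^{n+1}$ to "sandwich" $(s^{n+1})^2$ between $(\widetilde R^{n+1})^2$ and $\mathcal{E}_1(\phi^{n+1})$ — which are $O(\Delta t^2)$-close — rather than trying to bound the dissipation term on its own. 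This also implicitly requires the a priori regularity of the exact solution (so that $\phi, \mu, R$ and their relevant time derivatives are bounded) and the non-degeneracy $\mathcal{E}_1(\phi)\ge\delta>0$ along the trajectory, which I would state as standing assumptions consistent with the SAV literature.
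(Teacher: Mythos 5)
Your proposal is correct and uses essentially the same argument as the paper: the key claim is proved by the squeeze $\widetilde{R}^{n+1}\le s^{n+1}=R^{n+1}<\sqrt{\mathcal{E}_1(\phi^{n+1})}$ in the branch where the min selects $s^{n+1}$ (the other branch being exact), combined with the second-order accuracy of both $\widetilde{R}^{n+1}$ and $\sqrt{\mathcal{E}_1(\phi^{n+1})}$. The paper performs the sandwich directly on the square roots rather than on the squares, and omits the surrounding consistency discussion, but the essential idea is identical.
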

\begin{proof}
If $\displaystyle\sqrt{\mathcal{E}_1(\phi^{n+1})}\leq s^{n+1}$,
we have
$$R^{n+1}=\min\left\{s^{n+1},\sqrt{\mathcal{E}_1(\phi^{n+1})}\right\}=\sqrt{\mathcal{E}_1(\phi^{n+1})}.$$
If $\displaystyle\sqrt{\mathcal{E}_1(\phi^{n+1})}>s^{n+1}$,
we have $R^{n+1}=\min\left\{s^{n+1},\sqrt{\mathcal{E}_1(\phi^{n+1})}\right\}=s^{n+1}$. It means that
$$\widetilde{R}^{n+1}\leq s^{n+1}=R^{n+1}<\sqrt{\mathcal{E}_1(\phi^{n+1})}.$$
Notice that $\widetilde{R}^{n+1}=\sqrt{\mathcal{E}_1(\phi(\textbf{x},t^{n+1}))}+O(\Delta t^2)$ and $\sqrt{\mathcal{E}_1(\phi^{n+1})}=\sqrt{\mathcal{E}_1(\phi(\textbf{x},t^{n+1}))}+O(\Delta t^2)$ we immediate obtain
$$R^{n+1}=\sqrt{\mathcal{E}_1(\phi(\textbf{x},t^{n+1}))}+O(\Delta t^2).$$
\end{proof}
\subsection{The energy-optimal generalized SAV (EOP-GSAV) scheme}
Inspired by the above EOP-SAV/CN scheme, we, in this subsection, construct an energy-optimal generalized SAV (EOP-GSAV) scheme, which not only inherits all the advantages of the GSAV approach, but can also modify the energy as close as the original energy. The detailed high-order EOP-GSAV/BDF$k$ scheme for the system \eqref{intro-e1} can be described as follows: given $R^0=\mathcal{E}(\phi_0)=E(\phi_0)+C>0$, $R^{n-1}$, $R^n$ $\phi^{n-1}$, $\phi^n$, we can update $\phi^{n+1}$ via the following two steps:

\textbf{Step I}: Compute $\phi^{n+1}$ and $\widetilde{R}^{n+1}$ by the following GSAV/BDF$k$ scheme:
\begin{equation}\label{emGSAV-e1}
   \begin{array}{l}
\displaystyle\frac{\alpha_k\overline{\phi}^{n+1}-A_k(\phi^n)}{\Delta t}=-\mathcal{G}\mu^{n+1},\\
\displaystyle\mu^{n+1}=\mathcal{L}\overline{\phi}^{n+1}+F'(\widehat{\phi}^{n+1}),\\
\displaystyle\frac{\widetilde{R}^{n+1}-R^n}{\Delta t}=\displaystyle-\frac{\widetilde{R}^{n+1}}{\mathcal{E}(\overline{\phi}^{n+1})}(\mathcal{G}\mu^{n+1},\mu^{n+1}),\\
\displaystyle\xi^{n+1}=\frac{\widetilde{R}^{n+1}}{\mathcal{E}(\overline{\phi}^{n+1})}\\
\phi^{n+1}=\left[1-(1-\xi^{n+1})^{k+1}\right]\overline{\phi}^{n+1}.
   \end{array}
\end{equation}
where $\alpha_k$, $\widehat{\phi}^{n+1}$ and the operator $A_k$ can be chosen as follows:

$k=1$:
\begin{equation*}
\aligned
\alpha_k=1,\quad A_k(\phi^n)=\phi^n,\quad \widehat{\phi}^{n+1}=\phi^n,
\endaligned
\end{equation*}

$k=2$:
\begin{equation*}
\aligned
\alpha_k=\frac32,\quad A_k(\phi^n)=2\phi^n-\frac12\phi^{n-1},\quad \widehat{\phi}^{n+1}=2\phi^n-\phi^{n-1}.
\endaligned
\end{equation*}

$k=3$:
\begin{equation*}
\aligned
\alpha_k=\frac{11}{6},\quad A_k(\phi^n)=3\phi^n-\frac32\phi^{n-1}+\frac13\phi^{n-2},\quad \widehat{\phi}^{n+1}=3\phi^n-3\phi^{n-1}+\phi^{n-2}.
\endaligned
\end{equation*}
For more details, please see [25].

\textbf{Step II}: Update the scalar auxiliary variable $R^{n+1}$ as
\begin{equation}\label{emGSAV-e2}
R^{n+1}=\min\left\{R^n,\mathcal{E}(\phi^{n+1})\right\}.
\end{equation}

\begin{theorem}
The update technique of $R^{n+1}$ in \textbf{Step II} \eqref{emGSAV-e2} is the optimal choice to  modify the energy dissipative law of the SAV method.
\end{theorem}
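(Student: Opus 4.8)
The plan is to mirror the argument used for Theorem~\ref{emsav-th1}, but now in the GSAV setting, where the role of the ``modified energy'' is played directly by the scalar $R^{n}$ (rather than by $\tfrac12(\mathcal{L}\phi^{n},\phi^{n})+|R^{n}|^2$). First I would extract the intrinsic dissipation satisfied by the auxiliary variable produced in \textbf{Step I}. Solving the third equation of \eqref{emGSAV-e1} for $\widetilde{R}^{n+1}$ gives
\begin{equation*}
\widetilde{R}^{n+1}=\frac{R^n}{1+\dfrac{\Delta t}{\mathcal{E}(\overline{\phi}^{n+1})}\bigl(\mathcal{G}\mu^{n+1},\mu^{n+1}\bigr)}.
\end{equation*}
Since $\mathcal{E}(\overline{\phi}^{n+1})>0$ and $(\mathcal{G}\mu^{n+1},\mu^{n+1})\ge0$, an easy induction starting from $R^0=\mathcal{E}(\phi_0)>0$ shows $0<\widetilde{R}^{n+1}\le R^n$. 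In other words, the GSAV step already enforces the monotone (modified) dissipation $R^{n+1}\le R^n$ together with the positivity $R^{n+1}>0$, the latter being needed so that $\xi^{n+2}$ in the subsequent step is well defined.

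Next I would formulate precisely what ``modifying the energy dissipative law'' permits: we are free to replace $\widetilde{R}^{n+1}$ by any value $R^{n+1}$ lying in the admissible set $\mathcal{S}=\{\,r:0<r\le R^n\,\}$, because this is exactly the set of values that both keeps the sequence $\{R^m\}$ non-increasing (i.e. preserves the GSAV modified-energy dissipation law) and keeps the scheme well posed. Among all such choices, the one that makes the modified energy $R^{n+1}$ as faithful as possible to the shifted original energy $\mathcal{E}(\phi^{n+1})$ is, by definition, the minimiser of $|r-\mathcal{E}(\phi^{n+1})|$ over $r\in\mathcal{S}$, i.e. the projection of $\mathcal{E}(\phi^{n+1})$ onto the interval $(0,R^n]$.

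Then I would solve this one-dimensional best-approximation problem explicitly, exactly as in cases (1)--(2) of the proof of Theorem~\ref{emsav-th1}. If $\mathcal{E}(\phi^{n+1})\le R^n$, then $\mathcal{E}(\phi^{n+1})\in\mathcal{S}$ (recall $\mathcal{E}(\phi^{n+1})=E(\phi^{n+1})+C>0$), so the minimal distance is $0$, attained at $R^{n+1}=\mathcal{E}(\phi^{n+1})$; in this case the modified energy coincides with the original one and the original dissipation law holds. If $\mathcal{E}(\phi^{n+1})>R^n$, then $R^n$ is the least upper bound of $\mathcal{S}$, hence the closest admissible value to $\mathcal{E}(\phi^{n+1})$, giving $R^{n+1}=R^n$. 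In both cases the optimal value is $R^{n+1}=\min\{R^n,\mathcal{E}(\phi^{n+1})\}$, which is precisely \eqref{emGSAV-e2}; moreover $\widetilde{R}^{n+1}\le R^n$ guarantees that this choice never falls below the value delivered by \textbf{Step I}, so it is genuinely the optimal relaxation.

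The main obstacle I anticipate is not a hard estimate but rather pinning down the correct admissible set and justifying its two defining inequalities: one must argue that $0<R^{n+1}\le R^n$ is both necessary (for monotone decay of the GSAV modified energy and for well-posedness of the next step) and sufficient, so that the claimed update is exactly the projection of $\mathcal{E}(\phi^{n+1})$ onto $(0,R^n]$. The positivity half of this is the delicate point and requires the inductive use of the explicit formula for $\widetilde{R}^{n+1}$ above together with the standing assumption $\mathcal{E}(\overline{\phi}^{n+1})>0$.
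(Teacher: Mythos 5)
Your proposal is correct and follows essentially the same route as the paper: deduce from \textbf{Step I} that $\widetilde{R}^{n+1}\le R^n$ (the paper states this as the inequality $\widetilde{R}^{n+1}-R^n\le -\Delta t(\mathcal{G}\mu^{n+1},\mu^{n+1})R^n/\bigl(\mathcal{E}(\overline{\phi}^{n+1})+\Delta t(\mathcal{G}\mu^{n+1},\mu^{n+1})\bigr)\le 0$, which is just your explicit formula rearranged), identify $R^n$ as the least upper bound compatible with the dissipative law, and then split into the two cases $\mathcal{E}(\phi^{n+1})\le R^n$ and $\mathcal{E}(\phi^{n+1})>R^n$ to conclude that $R^{n+1}=\min\{R^n,\mathcal{E}(\phi^{n+1})\}$ is the closest admissible value to the original energy. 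Your explicit framing as a projection onto $(0,R^n]$ and the inductive tracking of positivity are minor tidy additions the paper leaves implicit, not a different argument.
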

\begin{proof}
From the \textbf{Step I} in the EOP-GSAV/BDF$k$ scheme \eqref{emGSAV-e1}, we are easy to obtain the following energy inequality
\begin{equation}\label{emGSAV-e3}
\aligned
\widetilde{R}^{n+1}-R^n\leq\displaystyle -\frac{\Delta t(\mathcal{G}\mu^{n+1},\mu^{n+1})R^n}{\mathcal{E}(\overline{\phi}^{n+1})+\Delta t(\mathcal{G}\mu^{n+1},\mu^{n+1})}\leq0,
\endaligned
\end{equation}
It means that $R^{n}$ is the least upper bound on $\widetilde{R}^{n+1}$ to keep the energy dissipative law.

We explain below why $R^{n+1}$ in \textbf{Step II} \eqref{emGSAV-e2} is the optimal choice to modify the energy dissipative law.

(1). If $R^{n}\geq\mathcal{E}(\phi^{n+1})$, we update $R^{n+1}$ by $R^{n+1}=\mathcal{E}(\phi^{n+1})$. It means the modified energy is totally equal to the original energy. Obviously $R^{n+1}=\mathcal{E}(\phi^{n+1})$ is an optimal choice.

(2). If $R^{n}<\mathcal{E}(\phi^{n+1})$, we update $R^{n+1}$ by $R^{n+1}=R^{n}$.
Noting that $R^{n}$ is the least upper bound on $R^{n+1}$ and meanwhile the energy dissipative $R^{n+1}\leq R^n$ holds, thus it is the closest real number to $\mathcal{E}(\phi^{n+1})$ which means it is an optimal choice to update $R^{n+1}$.
\end{proof}
\begin{theorem}\label{emGSAV-th1}
The high-order EOP-GSAV/BDF$k$ scheme \eqref{emGSAV-e1}-\eqref{emGSAV-e2} is unconditionally energy stable in the sense that
\begin{equation}\label{emGSAV-e6}
\mathcal{\widetilde{E}}(\phi^{n+1})-\mathcal{\widetilde{E}}(\phi^{n})\leq0,
\end{equation}
where $\mathcal{\widetilde{E}}(\phi^{n+1})=R^{n+1}-C$ is a modified energy. We further have the following original dissipative law:
\begin{equation*}
\aligned
E(\phi^{n+1})\leq E(\phi^{n}),
\endaligned
\end{equation*}
under the condition of $\mathcal{E}(\phi^{n+1})\leq R^n$. Here $E(\phi^{n})$ is the original energy.
\end{theorem}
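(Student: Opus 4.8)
The plan is to read off the entire statement directly from the clipping formula in \textbf{Step II}, namely $R^{n+1}=\min\{R^n,\mathcal{E}(\phi^{n+1})\}$, supplemented only by a one-line induction guaranteeing positivity. Note that the Step~I energy identity \eqref{emGSAV-e3} is what \emph{justifies} the construction (it shows $\widetilde{R}^{n+1}\le R^n$, so clipping against $R^n$ is natural), but it is not actually needed for the stability of $\widetilde{E}$: that is already encoded in the minimum.

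First I would treat the unconditional part. Since $R^{n+1}\le R^{n}$ by the very definition of the minimum, I immediately get
\begin{equation*}
\widetilde{E}(\phi^{n+1})=R^{n+1}-C\le R^{n}-C=\widetilde{E}(\phi^{n}),
\end{equation*}
which is exactly \eqref{emGSAV-e6}; no restriction on $\Delta t$ enters, hence the stability is unconditional. Next I would record the one-sided comparison between the modified and the original energy: again from the minimum, $R^{n+1}\le\mathcal{E}(\phi^{n+1})=E(\phi^{n+1})+C$, so $\widetilde{E}(\phi^{n+1})=R^{n+1}-C\le E(\phi^{n+1})$ for every $n\ge0$; combined with the initialization $R^{0}=\mathcal{E}(\phi_{0})=E(\phi_{0})+C$, which gives $\widetilde{E}(\phi^{0})=E(\phi^{0})$, this shows $\widetilde{E}(\phi^{n})\le E(\phi^{n})$ for all $n\ge0$.

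Finally I would turn to the original dissipative law under the hypothesis $\mathcal{E}(\phi^{n+1})\le R^{n}$. In that case the minimum is attained at $\mathcal{E}(\phi^{n+1})$, so $R^{n+1}=\mathcal{E}(\phi^{n+1})$ and therefore $\widetilde{E}(\phi^{n+1})=\mathcal{E}(\phi^{n+1})-C=E(\phi^{n+1})$; i.e.\ at the level $n+1$ the modified energy coincides with the true one. Chaining this with the two facts established above yields
\begin{equation*}
E(\phi^{n+1})=\widetilde{E}(\phi^{n+1})\le\widetilde{E}(\phi^{n})\le E(\phi^{n}),
\end{equation*}
which is the asserted original energy dissipation. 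I expect no genuine obstacle: the result is a direct consequence of the min-clipping in \textbf{Step II}, and the only points deserving a line of care are the bookkeeping of the additive constant $C$, the base case $n=0$, and a trivial induction showing $R^{n}>0$ and $\mathcal{E}(\cdot)>0$ (the latter from the lower-bound assumption $E_{1}\ge-C$), so that the minimum in \textbf{Step II}—and the auxiliary ratio $\xi^{n+1}$ used in \textbf{Step I}—are well defined.
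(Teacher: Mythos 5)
Your proposal is correct and follows essentially the same route as the paper's own proof: both arguments read the modified-energy decay and the bound $\widetilde{E}(\phi^{n+1})\le E(\phi^{n+1})$ directly off the min-clipping in \textbf{Step II}, then chain the equalities under the hypothesis $\mathcal{E}(\phi^{n+1})\le R^{n}$ to obtain the original dissipative law. The extra remarks on positivity and the base case are harmless additions but not needed beyond what the paper already records.
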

\begin{proof}
The equation \eqref{emGSAV-e2} indicates that $R^{n+1}\leq R^n$, then we immediately obtain
\begin{equation}\label{emGSAV-e7}
\aligned
\mathcal{\widetilde{E}}(\phi^{n+1})-\mathcal{\widetilde{E}}(\phi^{n})\leq0,
\endaligned
\end{equation}

From equation \eqref{emGSAV-e2}, we can also obtain $R^{n+1}\leq\mathcal{E}(\phi^{n+1})=E(\phi^{n+1})+C$, then the following inequality is satisfied:
\begin{equation}\label{emGSAV-e8}
R^{n+1}-C\leq E(\phi^{n+1}),
\end{equation}
which means $\mathcal{\widetilde{E}}(\phi^{n+1})\leq E(\phi^{n+1})$ and $\mathcal{\widetilde{E}}(\phi^{n})\leq E(\phi^{n})$, $\forall n\geq0$.

If $\mathcal{E}(\phi^{n+1})\leq R^n$, we get $R^{n+1}=\mathcal{E}(\phi^{n+1})=E(\phi^{n+1})+C$, the following equation will hold:
\begin{equation*}
\aligned
\mathcal{\widetilde{E}}(\phi^{n+1})=E(\phi^{n+1}).
\endaligned
\end{equation*}
Obviously, the following original dissipative law under the condition $\mathcal{E}(\phi^{n+1})\leq R^n$  will satisfy:
\begin{equation}\label{emGSAV-e9}
\aligned
E(\phi^{n+1})=\mathcal{\widetilde{E}}(\phi^{n+1})\leq\mathcal{\widetilde{E}}(\phi^{n})\leq E(\phi^{n}),
\endaligned
\end{equation}
which completes the proof.
\end{proof}
\begin{theorem}
The \textbf{Step II} in the EOP-GSAV/BDF$k$ scheme \eqref{emGSAV-e1}-\eqref{emGSAV-e2} can be rewritten as an equation with a relaxed factor $R^{n+1}=\lambda\widetilde{R}^{n+1}+(1-\lambda)\mathcal{E}(\phi^{n+1})$. Furthermore, it can be viewed as the optimal choice for the relaxation. Naturally, we can derive the following error estimate:
\begin{equation}
\|\phi^{n+1}-\phi(\textbf{x},t^{n+1})\|_{H^2}\leq C(\Delta t)^k, \quad n+1\leq \frac{T}{\Delta t}.
\end{equation}
\end{theorem}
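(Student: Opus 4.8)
The plan is to establish the three assertions in order; the first two follow directly from the structure of \textbf{Step II} and the energy inequality \eqref{emGSAV-e3}, while the last is a reduction to the error analysis of the plain GSAV/BDF$k$ scheme together with a little extra bookkeeping for the relaxed variable. For the relaxation rewriting, I would first solve the third equation of \eqref{emGSAV-e1} explicitly, $\widetilde{R}^{n+1}=R^n/(1+\Delta t(\mathcal{G}\mu^{n+1},\mu^{n+1})/\mathcal{E}(\overline{\phi}^{n+1}))$, which already gives $0<\widetilde{R}^{n+1}\le R^n$ (this is \eqref{emGSAV-e3}). Then I distinguish the two branches of \eqref{emGSAV-e2}. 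If $R^n\ge\mathcal{E}(\phi^{n+1})$, then $R^{n+1}=\mathcal{E}(\phi^{n+1})$, i.e. $R^{n+1}=\lambda\widetilde{R}^{n+1}+(1-\lambda)\mathcal{E}(\phi^{n+1})$ with $\lambda=0$. If $R^n<\mathcal{E}(\phi^{n+1})$, then $R^{n+1}=R^n$, and since $\widetilde{R}^{n+1}\le R^n<\mathcal{E}(\phi^{n+1})$ the value $R^n$ lies between $\widetilde{R}^{n+1}$ and $\mathcal{E}(\phi^{n+1})$, so $R^{n+1}=\lambda\widetilde{R}^{n+1}+(1-\lambda)\mathcal{E}(\phi^{n+1})$ with $\lambda=(\mathcal{E}(\phi^{n+1})-R^n)/(\mathcal{E}(\phi^{n+1})-\widetilde{R}^{n+1})\in(0,1]$ (well defined and $\le 1$ precisely by \eqref{emGSAV-e3}). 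This yields the relaxed form in all cases.

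For optimality, consider the one-parameter family $R^{n+1}(\lambda)=\mathcal{E}(\phi^{n+1})+\lambda(\widetilde{R}^{n+1}-\mathcal{E}(\phi^{n+1}))$, $\lambda\in[0,1]$. The modified energy $\mathcal{\widetilde{E}}(\phi^{n+1})=R^{n+1}-C$ obeys the dissipation law \eqref{emGSAV-e6} exactly when $R^{n+1}(\lambda)\le R^n$, an admissible subset of $[0,1]$ analogous to $\mathcal{V}$ in \eqref{RSAV-CN-e3}; the distance to the original energy is $\mathcal{E}(\phi^{n+1})-R^{n+1}(\lambda)=\lambda(\mathcal{E}(\phi^{n+1})-\widetilde{R}^{n+1})$. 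In the branch $R^n<\mathcal{E}(\phi^{n+1})$ one has $\widetilde{R}^{n+1}<\mathcal{E}(\phi^{n+1})$, so this distance is nonnegative and increasing in $\lambda$, and the optimal relaxation is the smallest admissible $\lambda$, namely $\lambda=(\mathcal{E}(\phi^{n+1})-R^n)/(\mathcal{E}(\phi^{n+1})-\widetilde{R}^{n+1})$, which reproduces $R^{n+1}=R^n=\min\{R^n,\mathcal{E}(\phi^{n+1})\}$; in the complementary branch $\lambda=0$ is admissible and already attains the unconstrained optimum $R^{n+1}=\mathcal{E}(\phi^{n+1})=\min\{R^n,\mathcal{E}(\phi^{n+1})\}$. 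Hence \textbf{Step II} is exactly the optimal relaxation — the linear-in-$R$ analogue of Remark~\ref{RSAV-rm1}, requiring no quadratic solve.

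For the error estimate I would run the BDF$k$ error analysis of the generalized SAV scheme essentially verbatim, supplying only the induction control of the relaxed variable that the nonlinear \textbf{Step II} creates. The argument is a coupled induction on $\|\phi^j-\phi(\textbf{x},t^j)\|_{H^2}\le C\Delta t^k$ together with $0<R^j$ and $|R^j-\mathcal{E}(\phi(\textbf{x},t^j))|\le C\Delta t^k$ for $j\le n$. Since the equations for $\overline{\phi}^{n+1}$ and $\mu^{n+1}$ in \eqref{emGSAV-e1} involve only the corrected values $\phi^n,\phi^{n-1},\dots$ and not $R^n$, the standard energy-estimate plus consistency argument yields $\|\overline{\phi}^{n+1}-\phi(\textbf{x},t^{n+1})\|_{H^2}\le C\Delta t^k$ and uniform bounds on $\mu^{n+1}$, hence on $(\mathcal{G}\mu^{n+1},\mu^{n+1})$. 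Feeding this and the hypothesis $|R^n-\mathcal{E}(\phi(\textbf{x},t^n))|\le C\Delta t^k$ into $\widetilde{R}^{n+1}=R^n-\Delta t\,\xi^{n+1}(\mathcal{G}\mu^{n+1},\mu^{n+1})$, and using that $\Delta t(\mathcal{G}\mu^{n+1},\mu^{n+1})$ is a consistent approximation of $\mathcal{E}(\phi(\textbf{x},t^n))-\mathcal{E}(\phi(\textbf{x},t^{n+1}))$, gives $1-\xi^{n+1}=O(\Delta t^k)$ and $|\widetilde{R}^{n+1}-\mathcal{E}(\phi(\textbf{x},t^{n+1}))|\le C\Delta t^k$; consequently $\phi^{n+1}=[1-(1-\xi^{n+1})^{k+1}]\overline{\phi}^{n+1}$ differs from $\overline{\phi}^{n+1}$ only by the higher-order term $(1-\xi^{n+1})^{k+1}\|\overline{\phi}^{n+1}\|_{H^2}$, so $\|\phi^{n+1}-\phi(\textbf{x},t^{n+1})\|_{H^2}\le C\Delta t^k$. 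Finally, because $R^{n+1}$ is, by the first part, a convex combination of $\widetilde{R}^{n+1}$ and $\mathcal{E}(\phi^{n+1})$, and both are within $C\Delta t^k$ of $\mathcal{E}(\phi(\textbf{x},t^{n+1}))$ — the second by local Lipschitz continuity of $\mathcal{E}$ at $\phi(\textbf{x},t^{n+1})$ — one obtains $|R^{n+1}-\mathcal{E}(\phi(\textbf{x},t^{n+1}))|\le C\Delta t^k$, closing the induction; a discrete Gronwall inequality then keeps all constants independent of $n$ for $n+1\le T/\Delta t$.

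I expect the main obstacle to be exactly this last point: unlike the plain GSAV scheme, \textbf{Step II} can freeze $R^{n+1}=R^n$, so one must verify that this branch is triggered only by an $O(\Delta t^k)$ mismatch between the discrete and exact energies — the exact energy being dissipative gives $R^n-\mathcal{E}(\phi^{n+1})=[\mathcal{E}(\phi(\textbf{x},t^n))-\mathcal{E}(\phi(\textbf{x},t^{n+1}))]+O(\Delta t^k)\ge -C\Delta t^k$ — and that the freeze cannot let $R$ drift away from the exact energy over the $O(1/\Delta t)$ steps. The convex-combination bound above is what controls this, but writing the coupled Gronwall argument so that $C$ stays independent of $n$ and $\Delta t$, and tracking the regularity of $F$ and $\phi$ needed for the Lipschitz bound on $\mathcal{E}$ and for the $O(\Delta t^2)$ consistency of the dissipation term, is where the genuine work lies.
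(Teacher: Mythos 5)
Your proposal is correct and follows essentially the same route as the paper: the same two-branch case analysis to exhibit the relaxation factor $\lambda$ (you additionally write out its explicit formula), the same least-upper-bound argument for optimality, and the same reduction of the error estimate to the GSAV/BDF$k$ analysis of the reference. The only difference is that where the paper simply cites (4.27)--(4.40) of the R-GSAV paper for the error bound, you actually sketch the coupled induction controlling $R^{n}$ and $\phi^{n}$, which is a faithful (and more informative) elaboration of the same argument rather than a different approach.
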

\begin{proof}
Firstly, from the \textbf{Step II} \eqref{emGSAV-e2}, if $R^n\geq\mathcal{\widetilde{E}}(\phi^{n+1})$, we have $$R^{n+1}=\min\left\{R^n,\mathcal{E}(\phi^{n+1})\right\}=\mathcal{E}(\phi^{n+1}),$$
which means there is a relaxed factor $\lambda=0$ to make
$$R^{n+1}=\lambda\widetilde{R}^{n+1}+(1-\lambda)\mathcal{E}(\phi^{n+1}).$$
Secondly, if $R^n<\mathcal{\widetilde{E}}(\phi^{n+1})$, we obtain
$$R^{n+1}=\min\left\{R^n,\mathcal{E}(\phi^{n+1})\right\}=R^n.$$
Noting that $\widetilde{R}^{n+1}\leq R^n$, then we have $R^{n+1}\in[\widetilde{R}^{n+1},\mathcal{\widetilde{E}}(\phi^{n+1})).$ We immediately obtain there is a constant $\lambda\in(0,1]$ to satisfy
$$R^{n+1}=\lambda\widetilde{R}^{n+1}+(1-\lambda)\mathcal{E}(\phi^{n+1}).$$
Then, we will explain the modified energy $\mathcal{\widetilde{E}}(\phi^{n+1})$ is optimal from a relaxation point of view. If $R^n\geq\mathcal{\widetilde{E}}(\phi^{n+1})$, we have $R^{n+1}=\mathcal{E}(\phi^{n+1})$. It means the modified energy $\mathcal{\widetilde{E}}(\phi^{n+1})$ is equal to the original energy. If $R^n<\mathcal{\widetilde{E}}(\phi^{n+1})$, we have $R^{n+1}=R^n<\mathcal{\widetilde{E}}(\phi^{n+1})$. Considering that $R^n$ is the maximum that satisfies the energy dissipative law, thus the modified energy $\mathcal{\widetilde{E}}(\phi^{n+1})=R^{n+1}$ is the closest value to the original energy.

Next, we will give the error estimate of the EOP-GSAV/BDF$k$ scheme \eqref{emGSAV-e1}-\eqref{emGSAV-e2}. By using the same procedure as the error estimate in \cite{zhang2022generalized}, especially for (4.27)-(4.40), we can obtain a similar result
\begin{equation}
\|\phi^{n+1}-\phi(\textbf{x},t^{n+1})\|_{H^2}\leq C(\Delta t)^k, \quad n+1\leq \frac{T}{\Delta t}.
\end{equation}
\end{proof}
\section{Examples and discussion}
In this section, we consider some numerical examples to illustrate the simplicity and efficiency of our proposed method. In all considered examples, we consider the periodic boundary conditions and use a Fourier spectral method in space.

\begin{example}\label{ex:AC}
\rm
The following Allen-Cahn equation is under our consideration,
\begin{equation}\label{eq:Allen-Cahn}
	\frac{\partial \phi}{\partial t}=M\left(\alpha_{0} \Delta \phi+\left(1-\phi^{2}\right) \phi\right),
\end{equation}
subject to periodic boundary conditions.

{\em Case A.} We give the exact solution
\begin{equation} \label{eq:AC-CH-exact-solution-example}
	\phi(x, y, t)=\exp (\sin (\pi x) \sin (\pi y)) \sin (t),
\end{equation}
by introducing an external force $f$ into \eqref{eq:Allen-Cahn} in the domain $\Omega=(0, 2)^{2}$.
We set the values of the parameters $M$ and $\alpha_{0}$ to $1$ and $0.01^2$, respectively.
To ensure that the spatial discretization error is much smaller than the time discretization error, we adopt $64^2$ Fourier modes for space discretization.

In Fig.\,\ref{Fig:AC-order-test-CN-BDF}, we present the $L^2$-norm error convergence rates at $T = 0.5$ obtained using the Crank-Nicolson (CN) scheme and BDF$k$ ($k=1,2,3,4$) schemes. Our observations are as follows:
(\romannumeral1) The expected convergence rates are achieved for all cases;
(\romannumeral2) For BDF$1$ and BDF$2$ schemes, the errors of EOP-GSAV schemes are substantially smaller than those of GSAV schemes;
(\romannumeral3) For BDF$3$ and BDF$4$ schemes, the margin of improvement is not as significant as for lower-order schemes.

Meanwhile, Fig.\,\ref{Fig:AC-zeta-E1} displays the relaxation parameter $\lambda_0$ evolution using the R-SAV/CN scheme, as well as the evolution of the difference between the original energy of the nonlinear part and $s^{n+1} = \sqrt{\frac12(\mathcal{L}\phi^{n},\phi^{n})-\frac12(\mathcal{L}\phi^{n+1},\phi^{n+1})+|R^{n}|^2}$ obtained using the EOP-SAV/CN scheme with a time step of $\Delta t=0.01$.
It is observed that the value of $\mathcal{E}_1(\phi^{n+1})-s^{n+1}$ is always negative, indicating that $R^{n+1}=\mathcal{E}_1(\phi^{n+1})$ at each time step. Furthermore, the modified results obtained using the EOP-SAV/CN scheme are identical to those obtained using the R-SAV/CN scheme, with $\lambda_0$ remaining constantly equal to $0$.
Furthermore, the errors of EOP-SAV/CN scheme are marginally higher than that of SAV/CN scheme, which means that the closer the modified energy is to the original energy may not necessarily result in a smaller error in the solution.

Fig.\ref{Fig:AC-zeta-E} illustrates the evolution of the relaxation parameter $\lambda_0^{n+1}$ using the R-GSAV/BDF$2$ scheme and the difference between the original energy and the modified energy using the EOP-GSAV/BDF$2$ scheme with a time step of $\Delta t=0.01$.
We observe that, except for an initial time interval, $\lambda_0^{n+1}$ remains zero, and the value of $\mathcal{E}(\phi^{n+1})-R^{n}$ is consistently negative. This indicates that the modified energy of the EOP-GSAV/BDF$2$ scheme is closer to that of the R-GSAV/BDF$2$ scheme in this case. The third energy contrast diagram in Fig.\ref{Fig:AC-zeta-E} further confirms this point.
Furthermore, we report the $L^2$-norm errors for different schemes: GSAV: $5.5896E-05$, R-GSAV: $4.3074E-05$, and EOP-GSAV: $4.3071E-05$. These results show that the EOP-GSAV scheme achieves a slightly lower error compared to the GSAV and R-GSAV schemes.

\begin{figure}[htbp]
	\centering
\subfigure[]
{
\includegraphics[width=7cm,height=6cm]{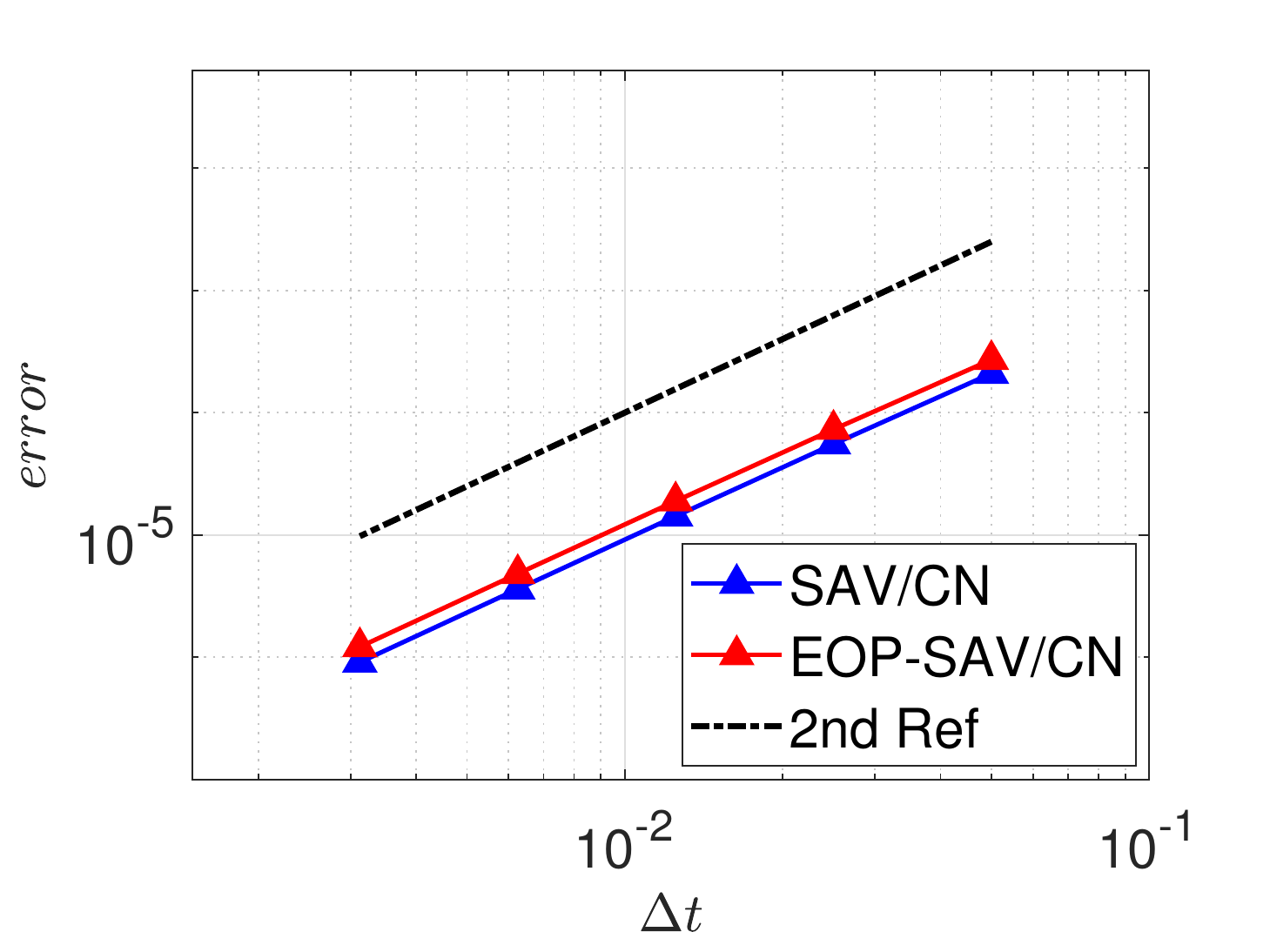}
}
\subfigure[]
{
\includegraphics[width=7cm,height=6cm]{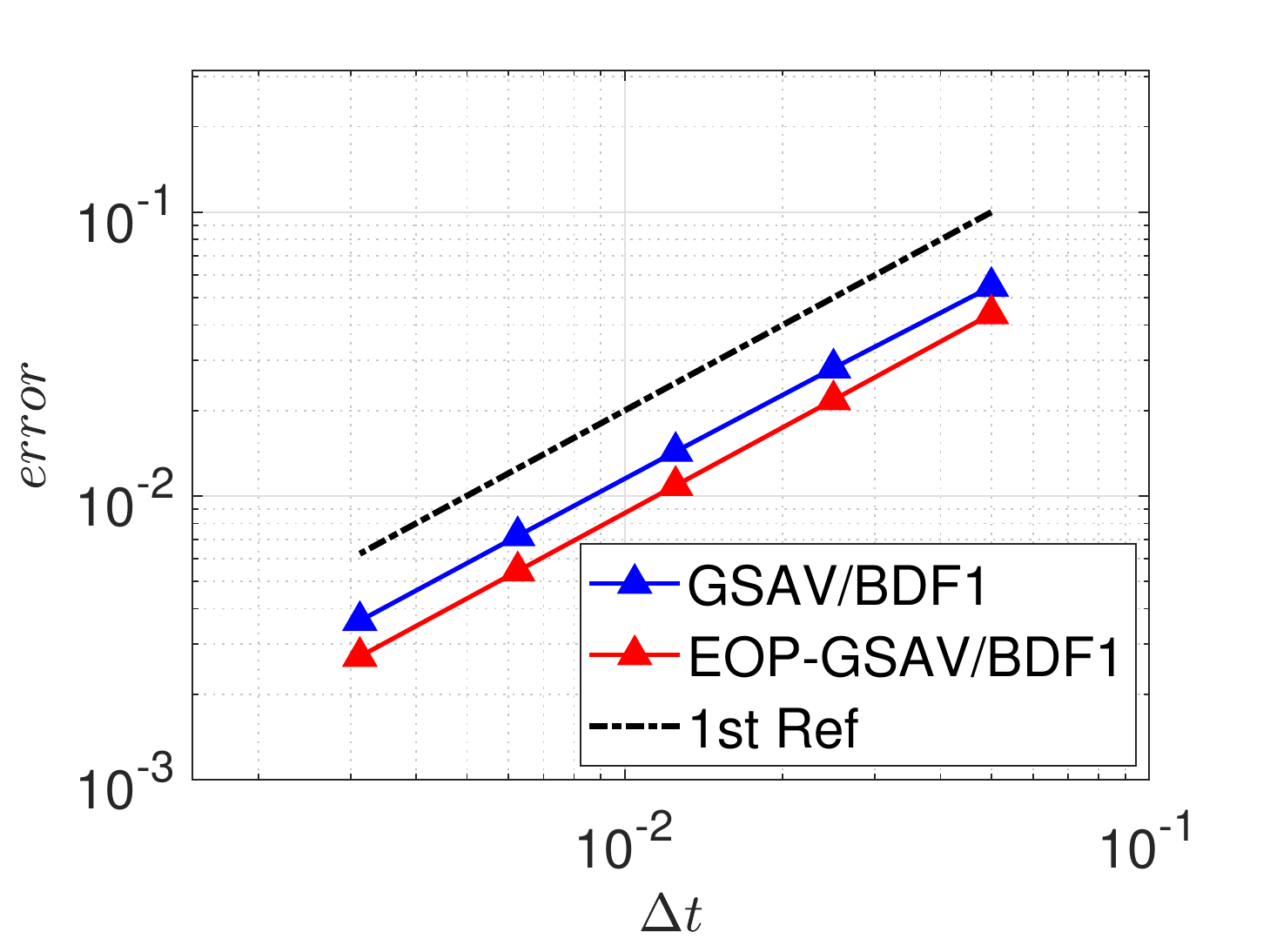}
}
\quad
\subfigure[]
{
\includegraphics[width=7cm,height=6cm]{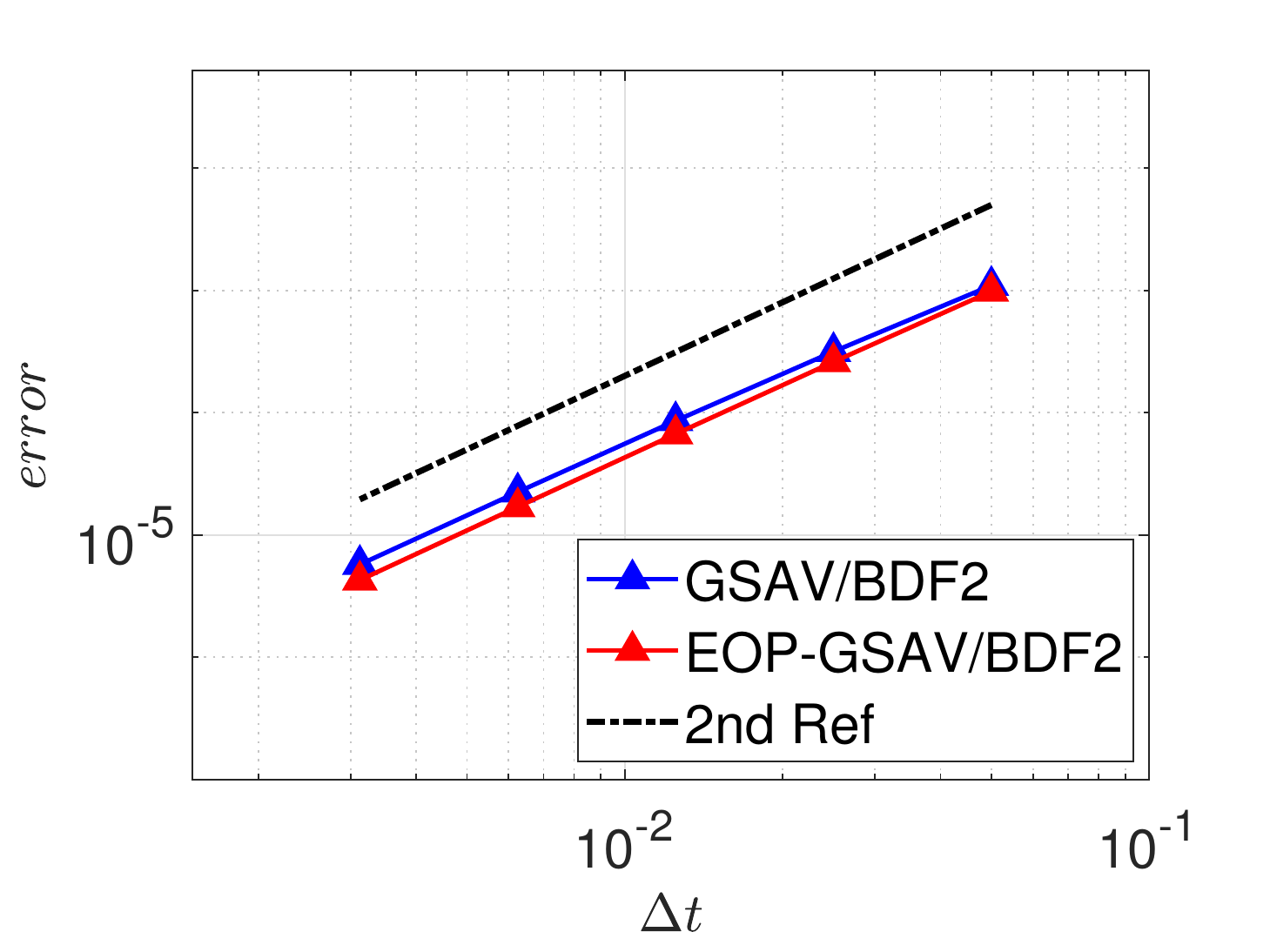}
}
\subfigure[]{
\includegraphics[width=7cm,height=6cm]{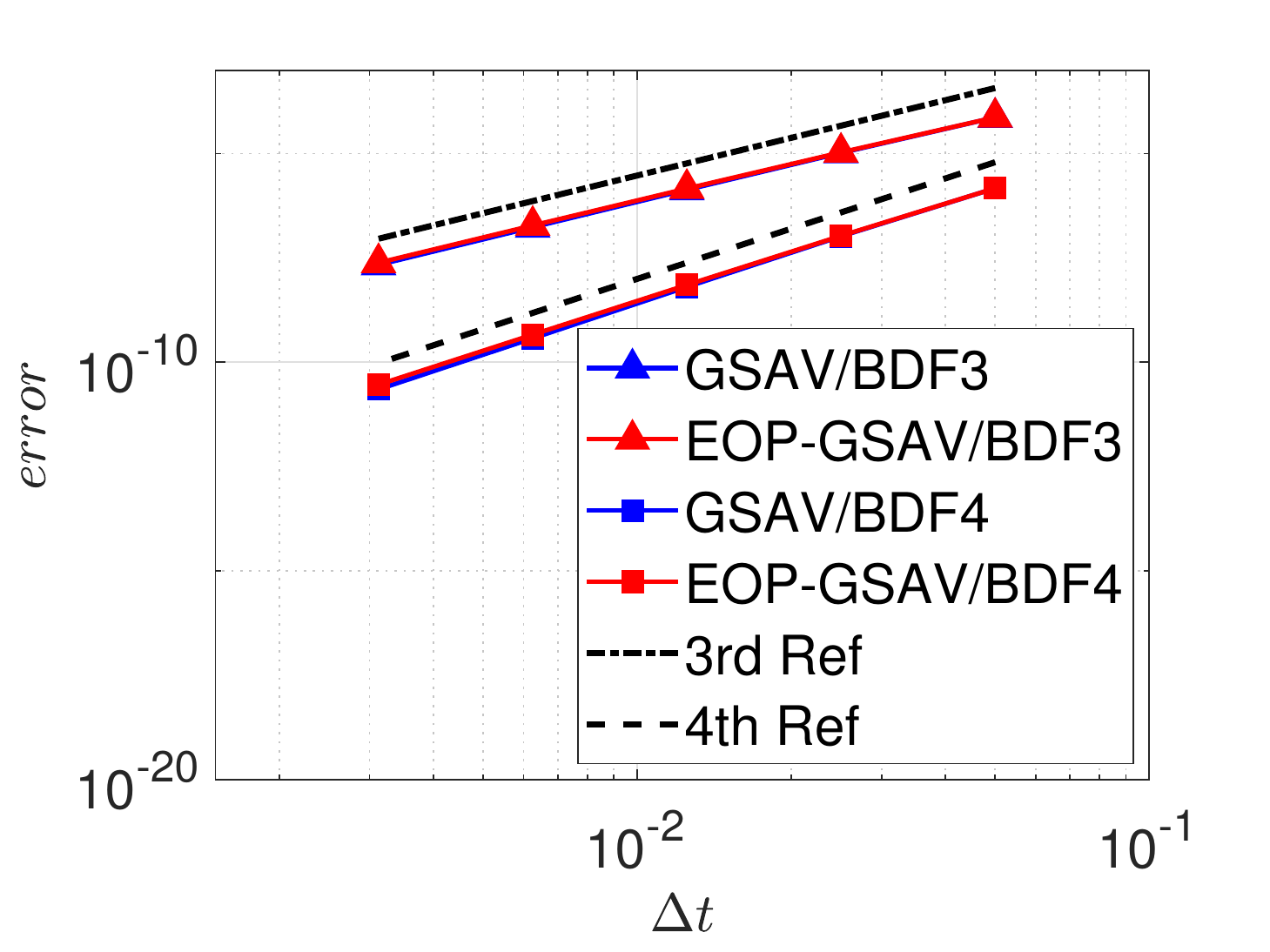}
}
	\caption{Example\,\ref{ex:AC}(Case A). Convergence rates for Allen-Cahn equation using various schemes. (a): CN; (b): BDF$1$; (c): BDF$2$; (d): BDF$k$, $(k=3,4)$.}
	\label{Fig:AC-order-test-CN-BDF}
\end{figure}

\begin{figure}[htbp]
	\centering
	\begin{minipage}{0.4\textwidth}
		\centering
		\includegraphics[width=5.3cm]{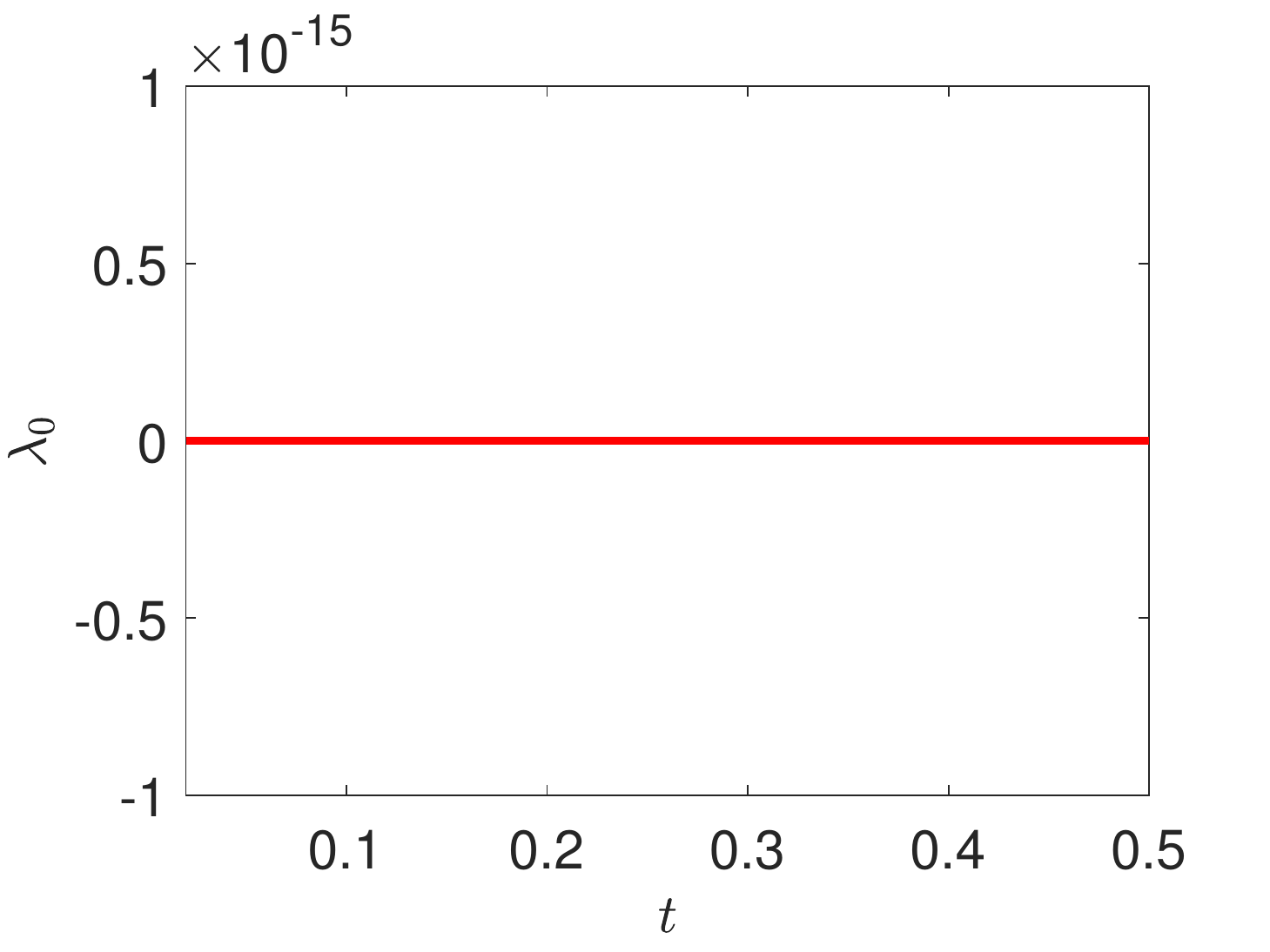}
	\end{minipage}
	\begin{minipage}{0.4\textwidth}
		\centering
		\includegraphics[width=5.3cm]{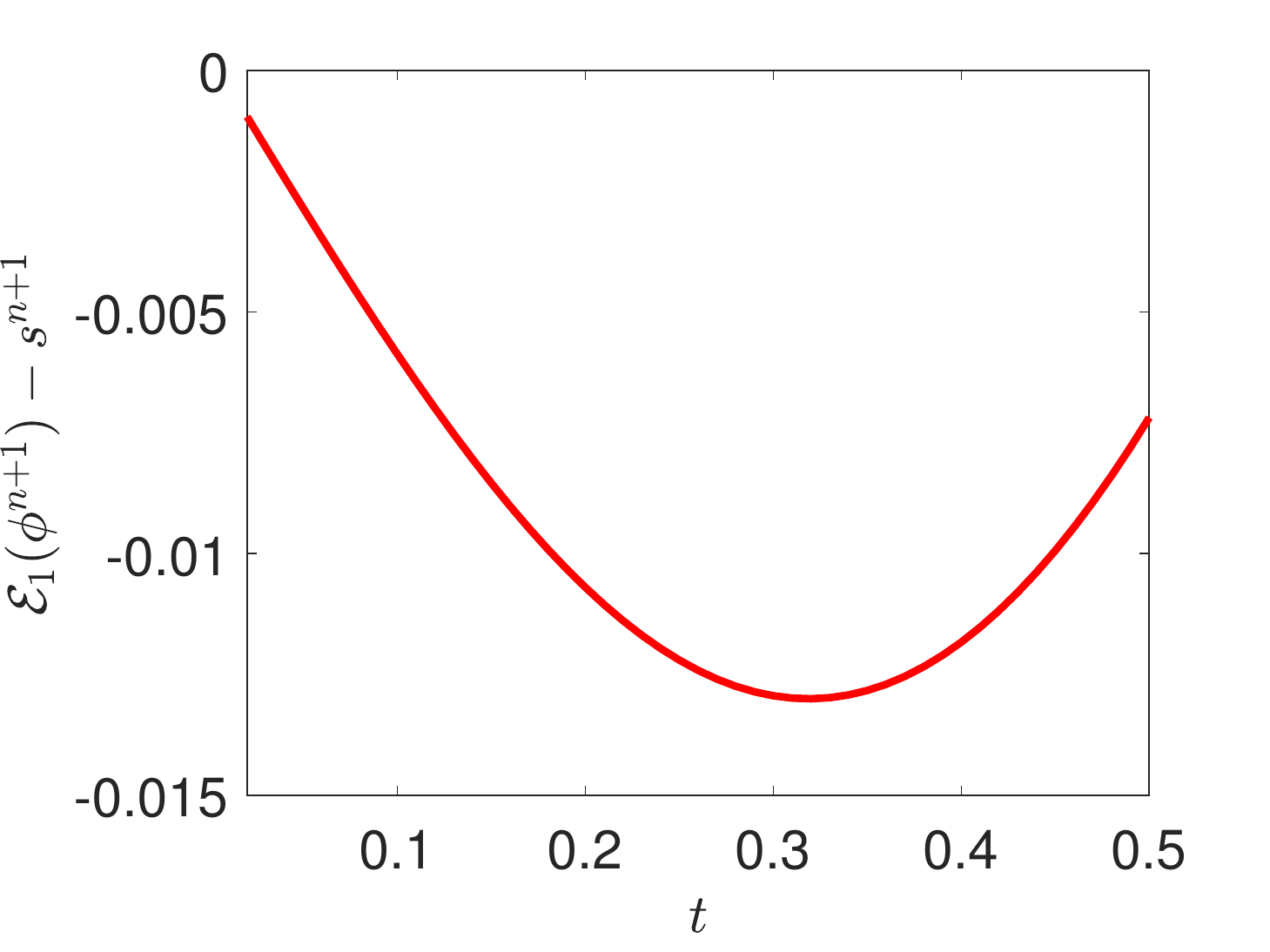}
	\end{minipage}
	\caption{Example\,\ref{ex:AC}(Case A). First: evolution of relaxation $\lambda_{0}$ using R-SAV/CN scheme with $\Delta t=1e-2$; Second: evolution of the difference between the original energy of nonlinear part and $s^{n+1}$ using EOP-SAV/CN scheme with $\Delta t=1e-2$.}
	\label{Fig:AC-zeta-E1}
\end{figure}

\begin{figure}[htbp]
	\centering
\subfigure{
\includegraphics[width=5cm,height=5cm]{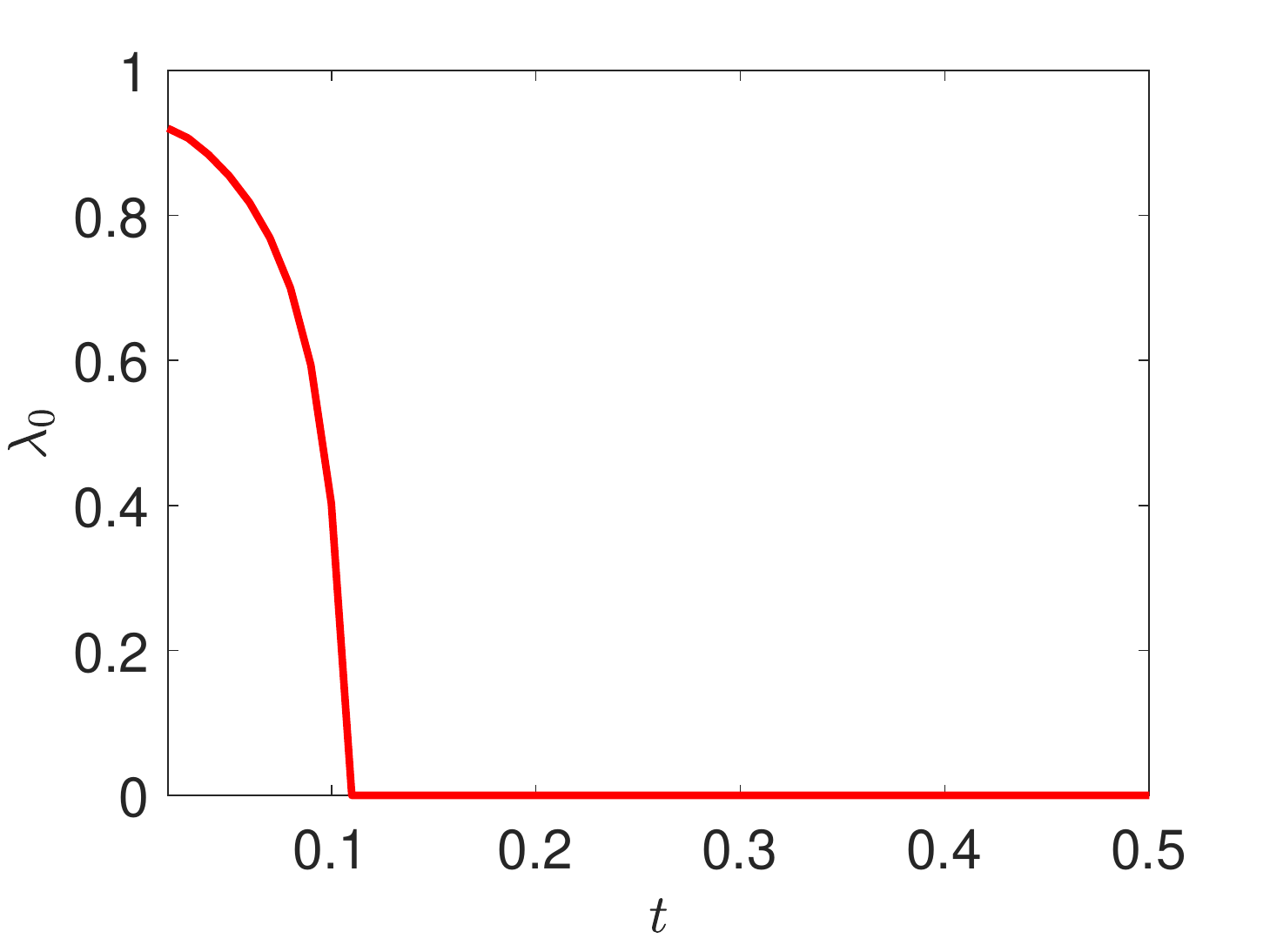}
}
\subfigure
{
\includegraphics[width=5cm,height=5cm]{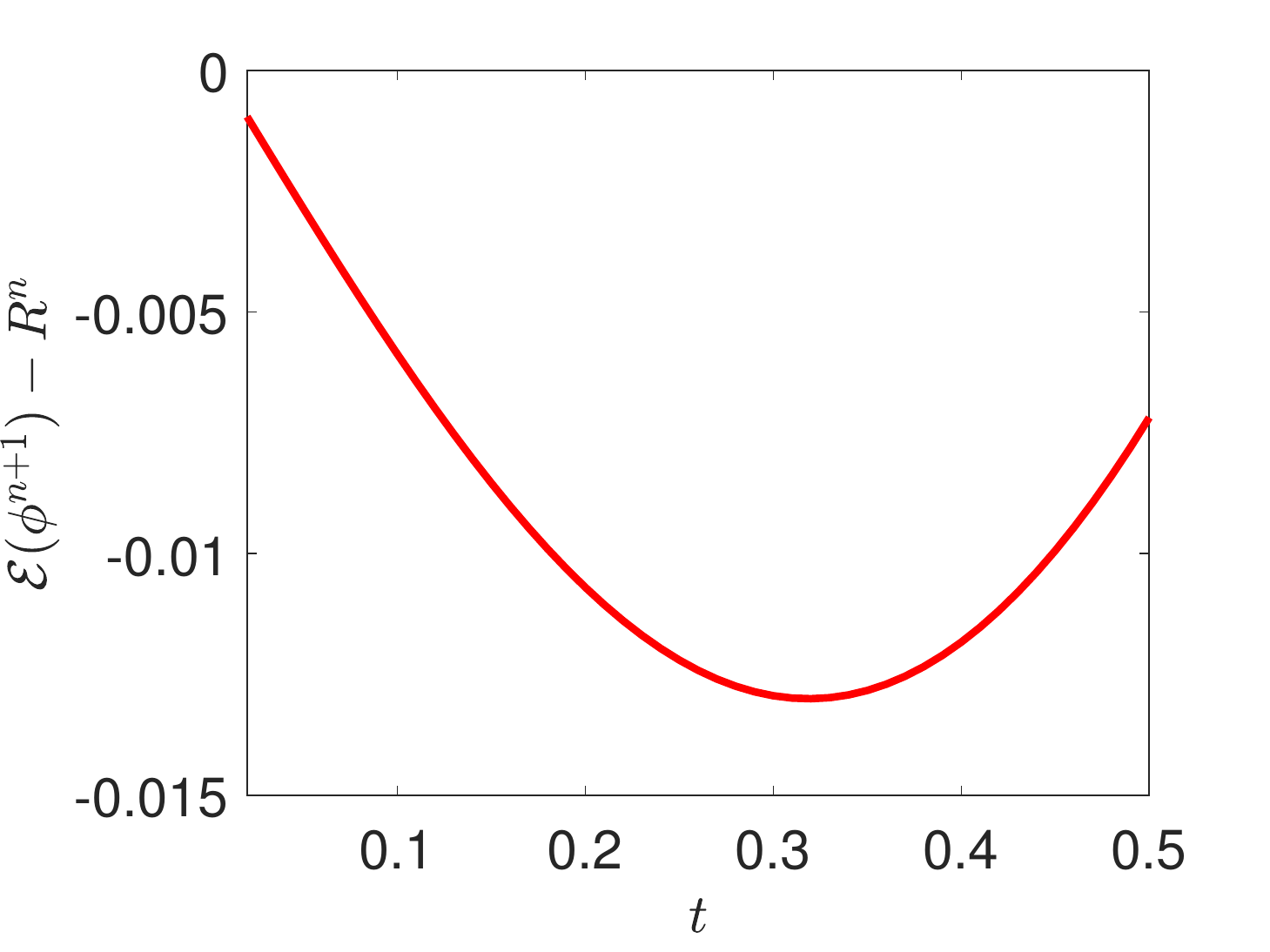}
}
\subfigure
{
\includegraphics[width=5cm,height=5cm]{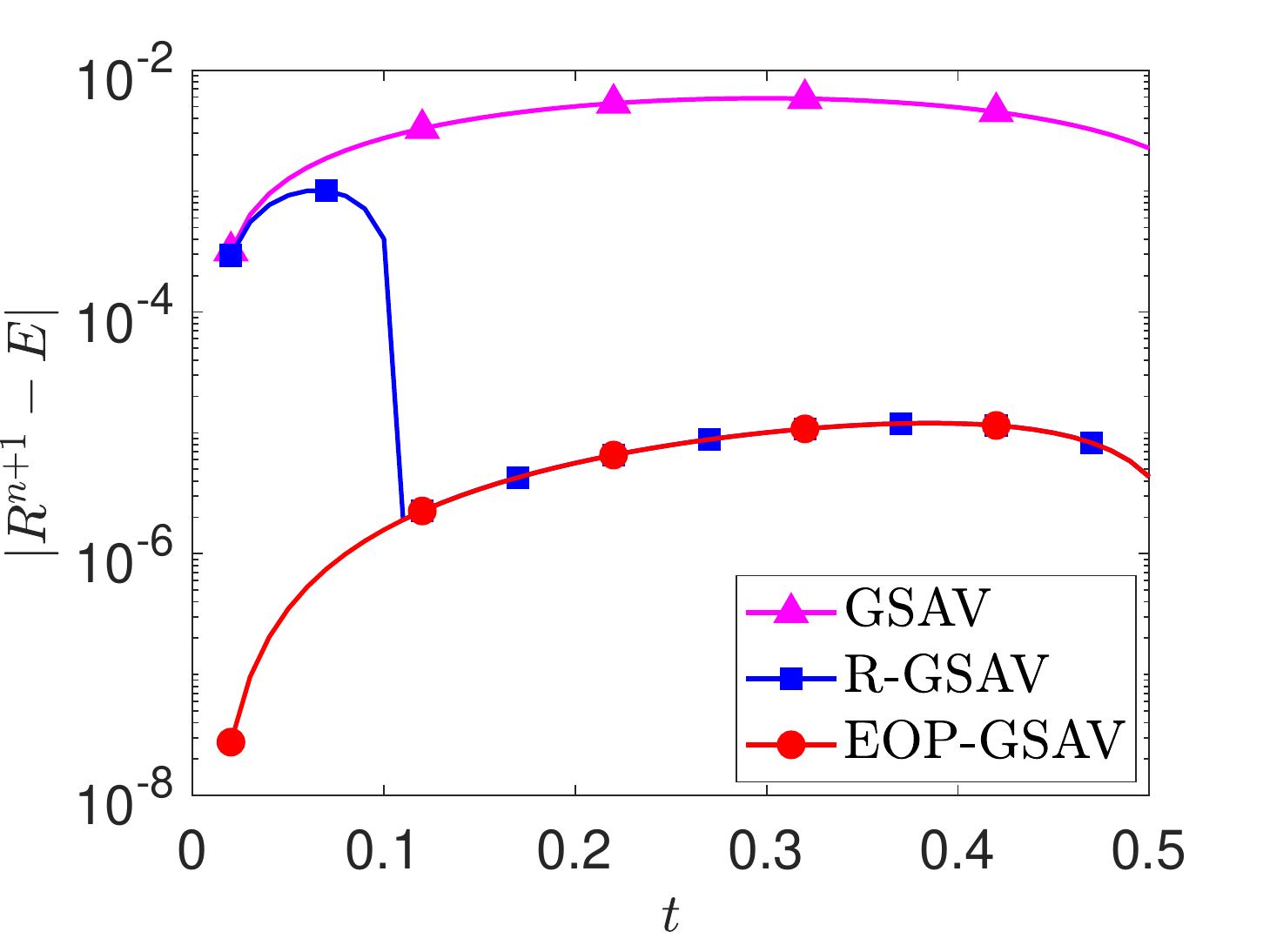}
}
	\caption{Example\,\ref{ex:AC}(Case A). First: evolution of relaxation $\lambda_{0}$ using R-GSAV/BDF$2$ scheme with $\Delta t=0.01$; Second: evolution of the difference between the original energy and the modified energy using EOP-GSAV/BDF$2$ scheme with $\Delta t=0.01$. Third: a comparison of energy for GSAV/BDF2, R-GSAV/BDF$2$ and EOP-GSAV/BDF$2$ schemes. }
	\label{Fig:AC-zeta-E}
\end{figure}

{\em Case B.}  We choose the initial condition as            \begin{equation}\label{eq:AC-CH-initial-condition-star-shape}
	\begin{aligned}
		& \phi(x, y)=\tanh \frac{1.5+1.2 \cos (6 \theta)-2 \pi r}{\sqrt{2\alpha}}, \\
		& \theta=\arctan \frac{y-0.5 L_{y}}{x-0.5 L_{x}}, \quad r=\sqrt{\left(x-\frac{L_{x}}{2}\right)^{2}+\left(y-\frac{L_{y}}{2}\right)^{2}},
	\end{aligned}
\end{equation}
where $(\theta,r)$ are the polar coordinates of $(x, y)$.
We set $\Omega=[0, L_x]\times[0, L_{y}]$ with $L_x=L_y=1$ and the other parameters are $\alpha_{0}=0.01^2, M=1$ and $128^2$ Fourier modes.
We use the results of the semi-implicit/BDF$2$ scheme with $\Delta t = 1e-5$ as the reference solution.
In Fig.\,\ref{Fig:AC-star-shape-energy-xi}, we present  a comparison of energy (first) and energy error (second) and error of $\xi^{n+1}$ (third) of GSAV/BDF$2$ and EOP-GSAV/BDF$2$ scheme with $\Delta t = 1e-3$.
Fig.\,\ref{Fig:AC-star-shape-emGSAV} presents the evolution of Allen-Cahn equation obtained by EOP-GSAV/BDF$2$ scheme with $\Delta t=1e-3$.

\begin{figure}[htbp]
	\centering
\subfigure{
\includegraphics[width=5cm,height=5cm]{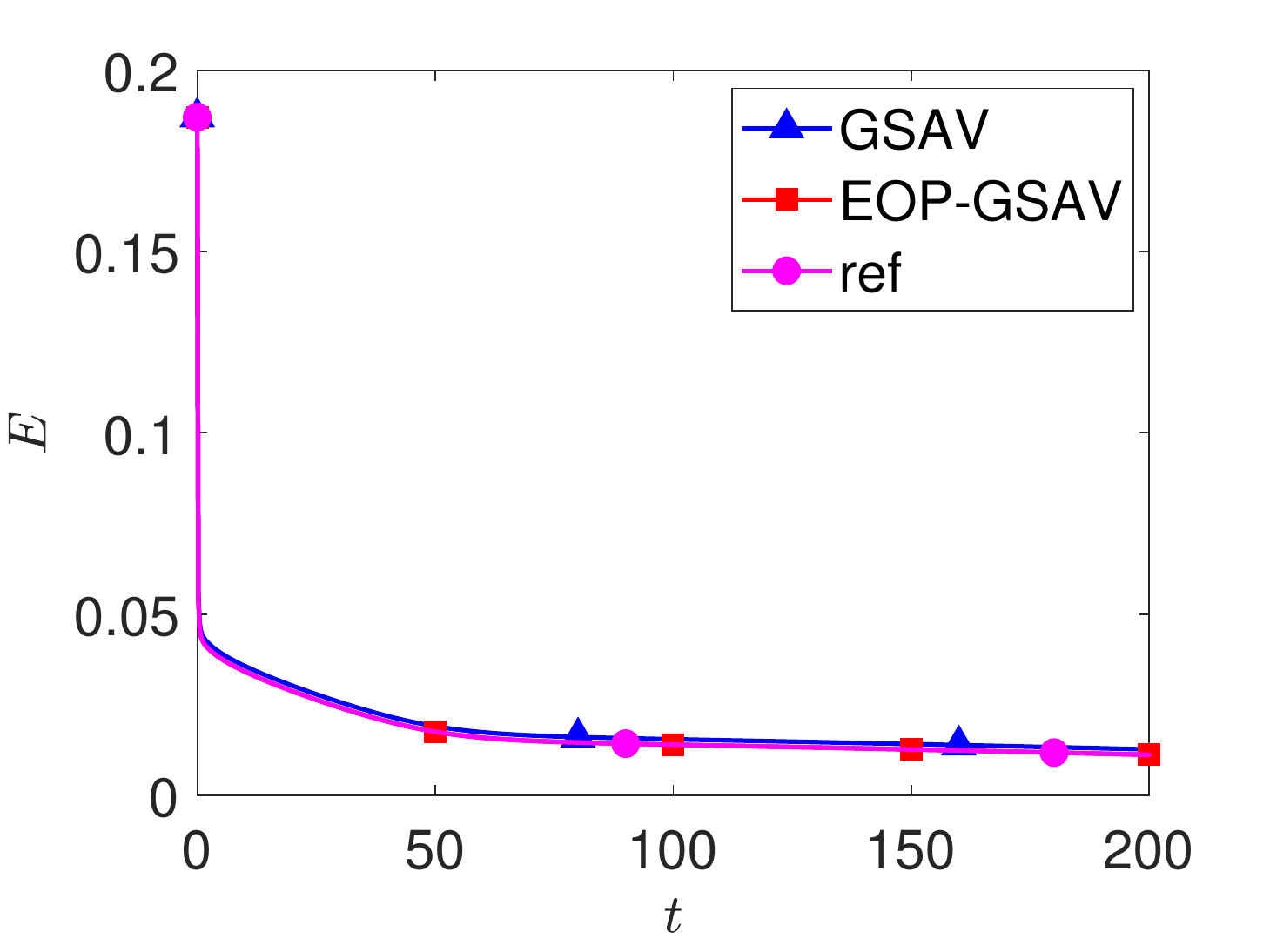}
}
\subfigure
{
\includegraphics[width=5cm,height=5cm]{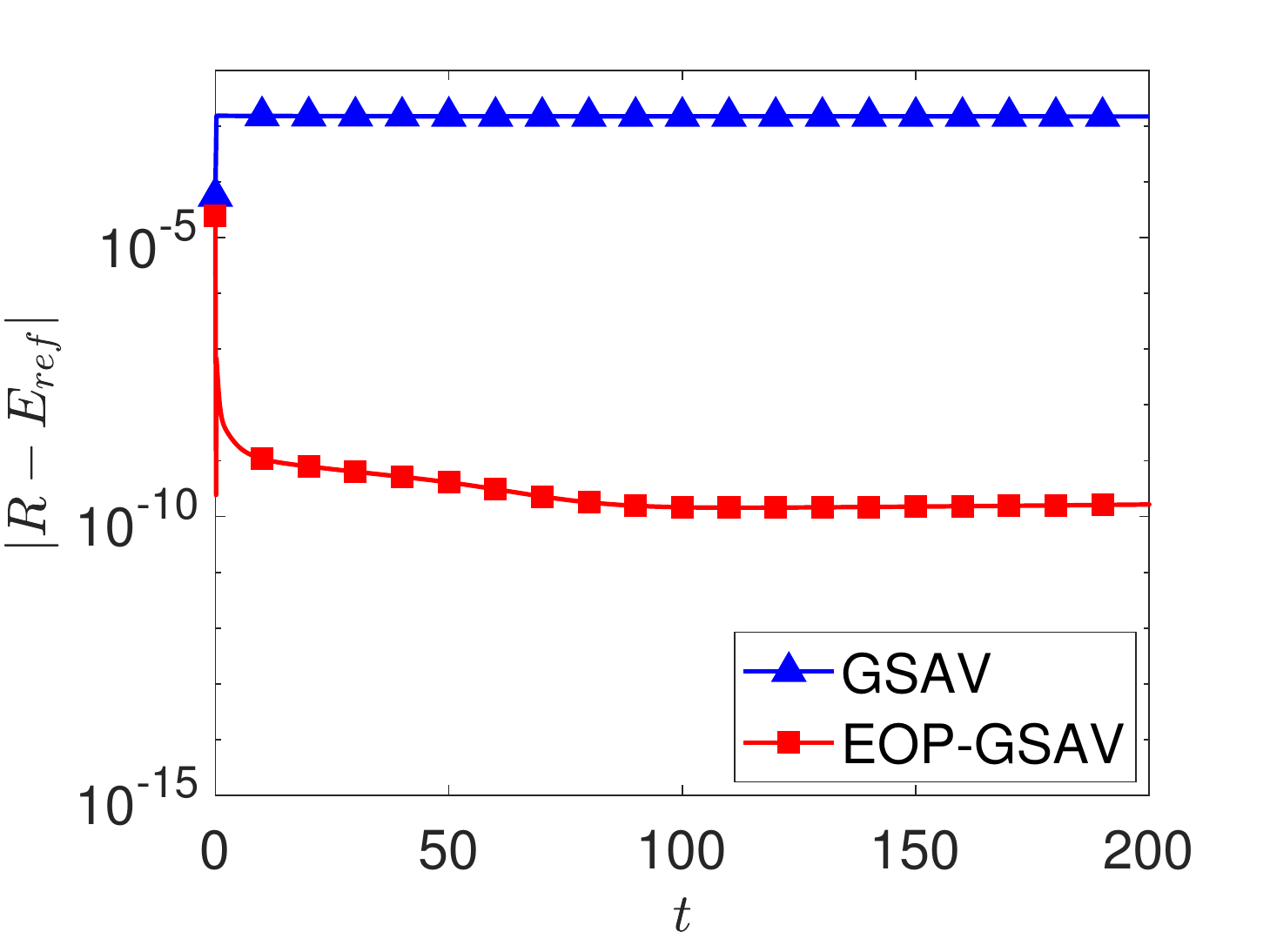}
}
\subfigure
{
\includegraphics[width=5cm,height=5cm]{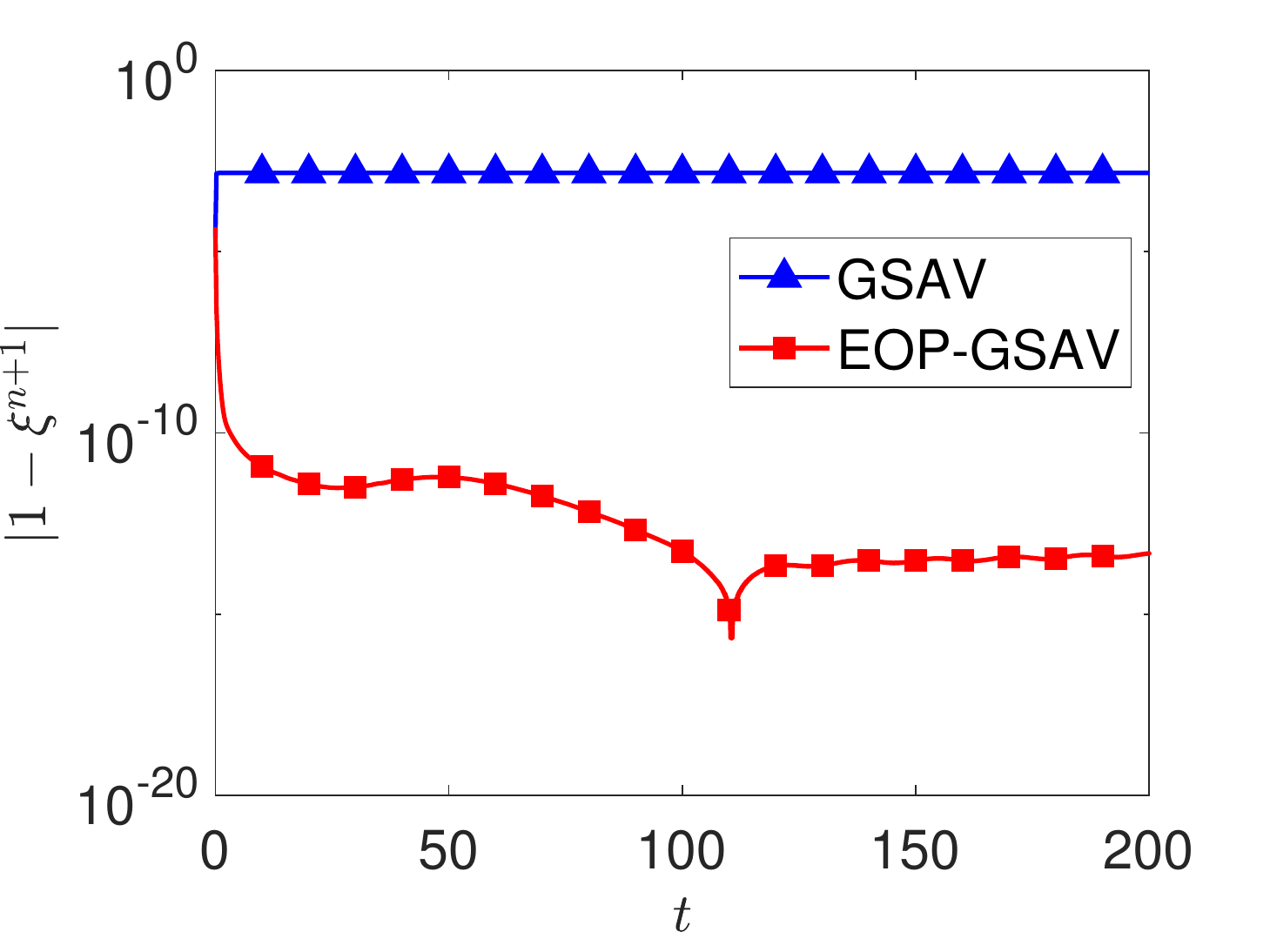}
}
	\caption{Example\,\ref{ex:AC}(Case B). A comparison of energy (first), energy error (second) and a comparison of error of $\xi^{n+1}$ (third) for GSAV/BDF$2$ and EOP-GSAV/BDF$2$ schemes.}
	\label{Fig:AC-star-shape-energy-xi}
\end{figure}

\begin{figure}[htbp]
	\centering
	\begin{minipage}{0.24\textwidth}
		\centering
		\includegraphics[width=4.7cm]{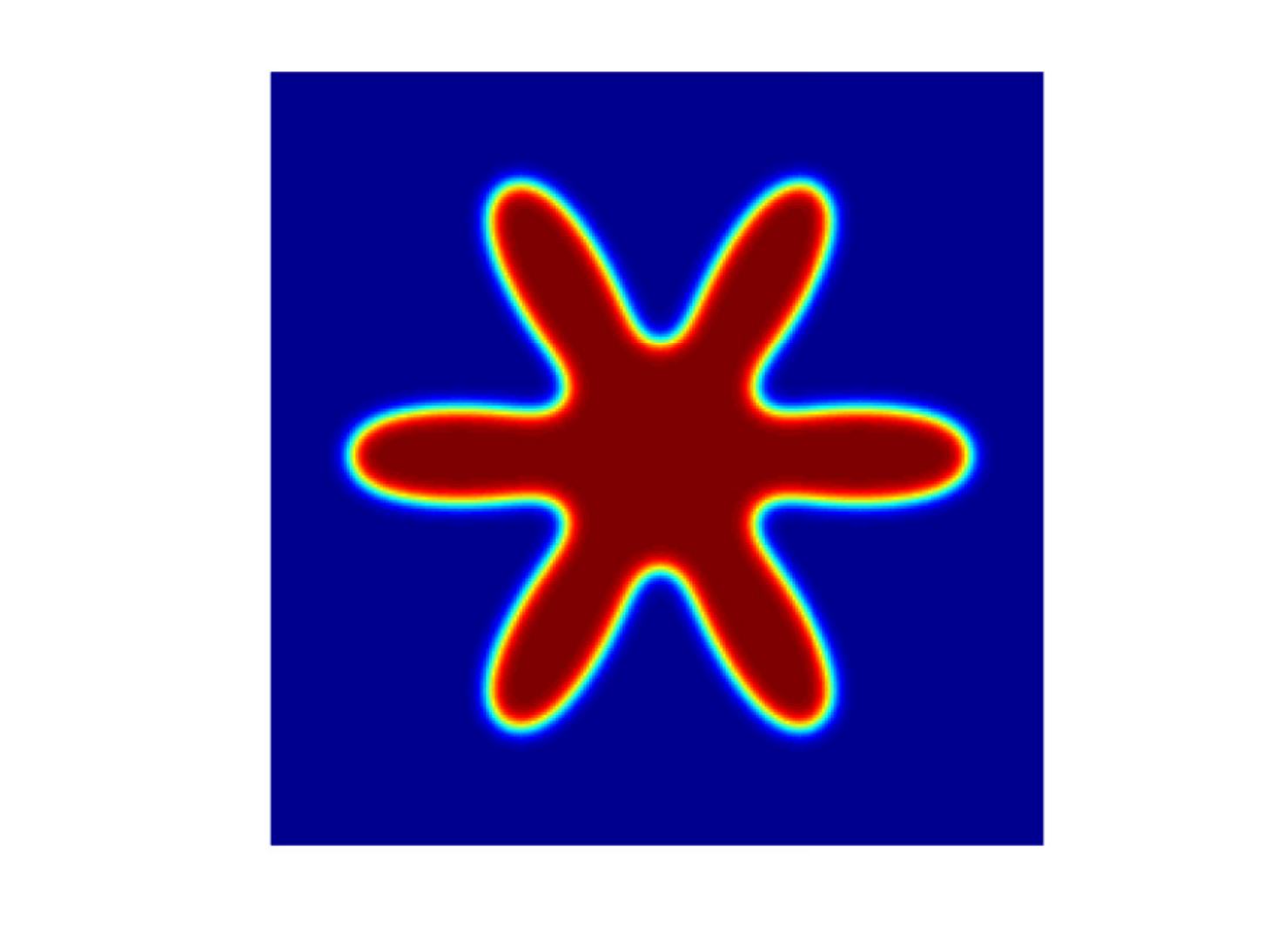}
	\end{minipage}	
	\begin{minipage}{0.24\textwidth}
		\centering
		\includegraphics[width=4.7cm]{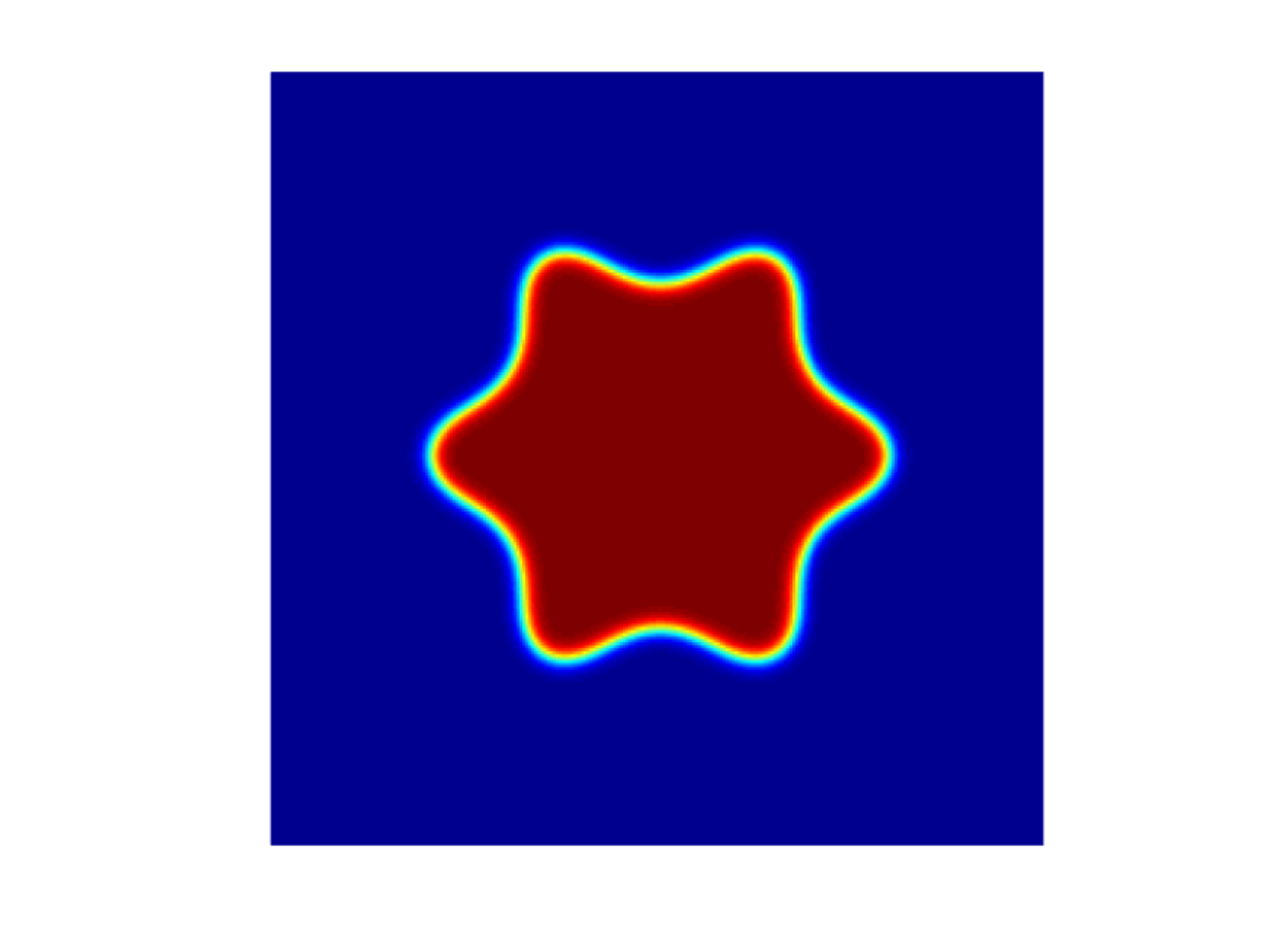}
	\end{minipage}
	\begin{minipage}{0.24\textwidth}
		\centering
		\includegraphics[width=4.7cm]{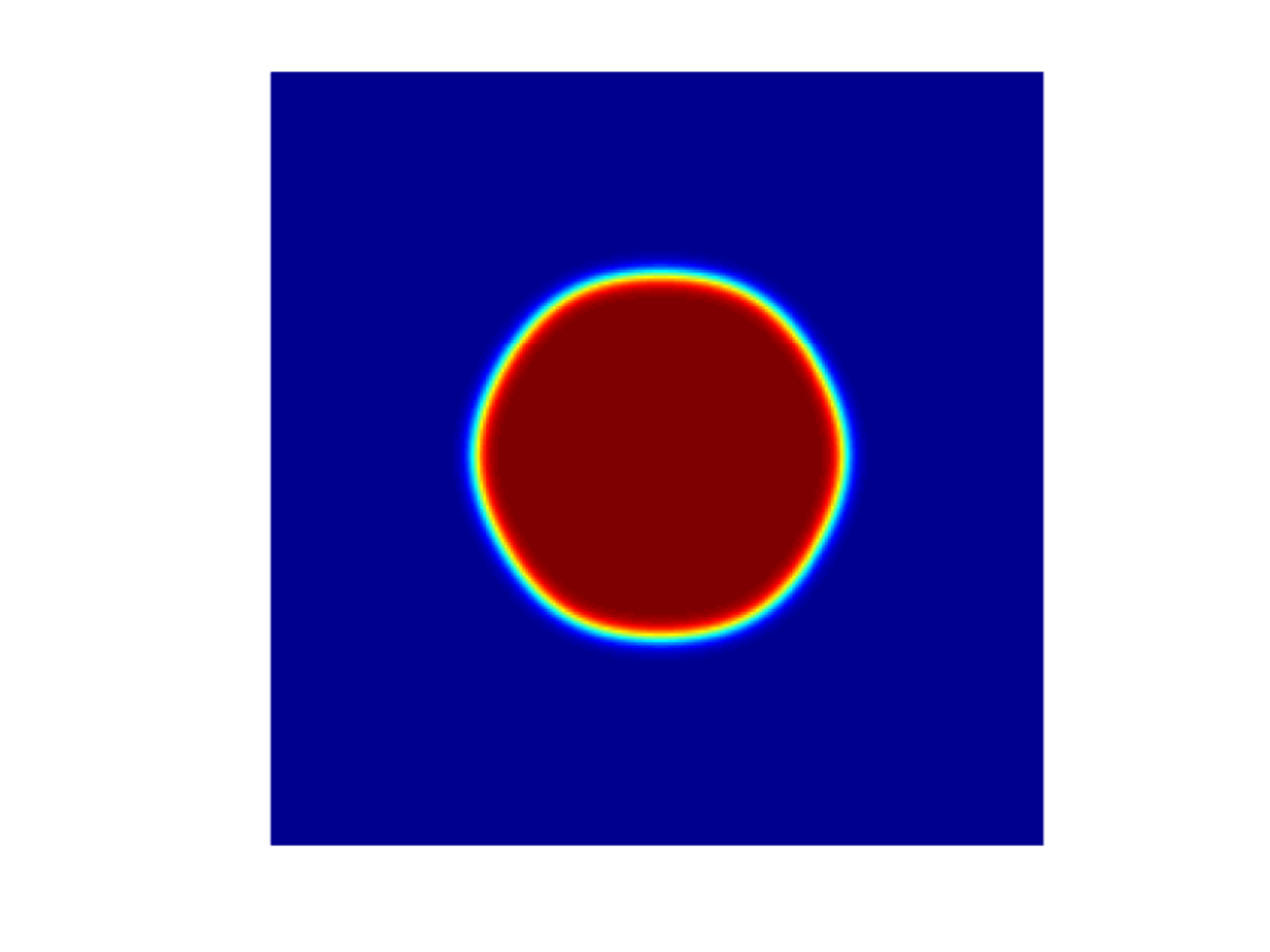}
	\end{minipage}
	\begin{minipage}{0.24\textwidth}
		\centering
		\includegraphics[width=4.7cm]{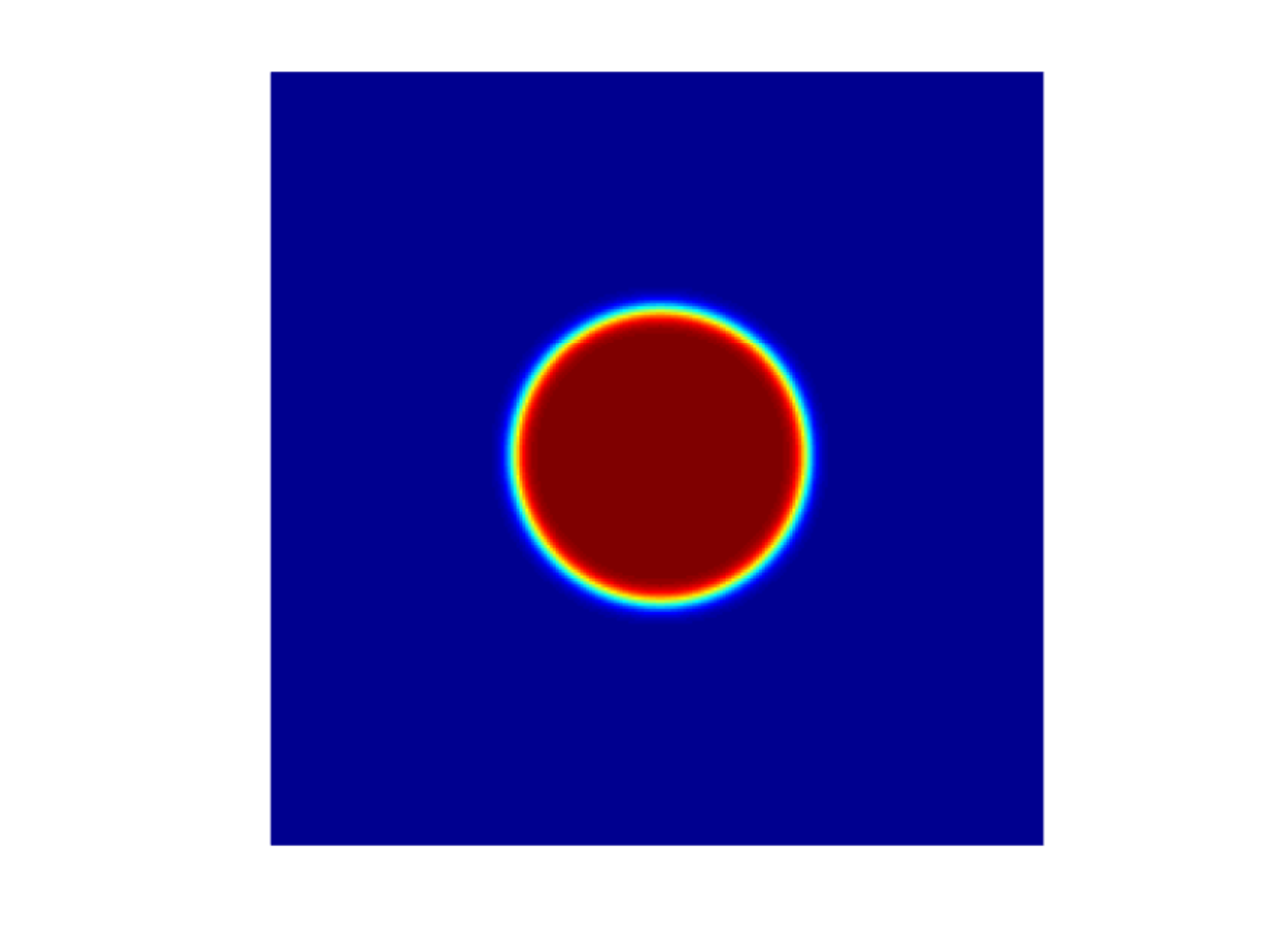}
	\end{minipage}
	\caption{Example\,\ref{ex:AC}(Case B). The $2$D dynamic evolution of Allen-Cahn equation obtained by EOP-GSAV/BDF$2$ scheme. Snapshots of the numerical solution $\phi$ at $T=10,$ $50,$ $100,$ $200,$ respectively.}
	\label{Fig:AC-star-shape-emGSAV}
\end{figure}

\end{example}

\begin{example}\label{ex:CH}
	\rm
	We consider Cahn-Hilliard equation
	\begin{equation}
		\frac{\partial \phi}{\partial t}=-M \Delta\left(\alpha_0 \Delta \phi+\frac{1}{\epsilon^2}\left(1-\phi^{2}\right) \phi\right).
	\end{equation}
	
	{\em Case A.}
	We consider the exact solution given by \eqref{eq:AC-CH-exact-solution-example} and set the parameters to $\alpha_0=0.04$, $M=0.005$, and $\epsilon=1$. The convergence rates of the CN and BDF$k$ (with $k=1,2,3,4$) schemes are presented in Fig.\,\ref{Fig:CH-order-test-CN-BDF}, respectively.
	The results are similar to those obtained for the Allen-Cahn equation.
	Additionally, Fig.\,\ref{Fig:CH-E1-E} shows the evolution of the difference between the original energy of the nonlinear part and $s^{n+1}$ obtained using the EOP-SAV/CN scheme with a time step of $\Delta t=0.01$, as well as the evolution of the difference between the original energy and the modified energy obtained using the EOP-GSAV/BDF$2$ scheme with a time step of $\Delta t=0.01$.
	In both cases, all the values are negative, indicating that the modified energy is equal to the original energy.
	
	\begin{figure}[htbp]
		\centering
\subfigure[]{
\includegraphics[width=7cm,height=6cm]{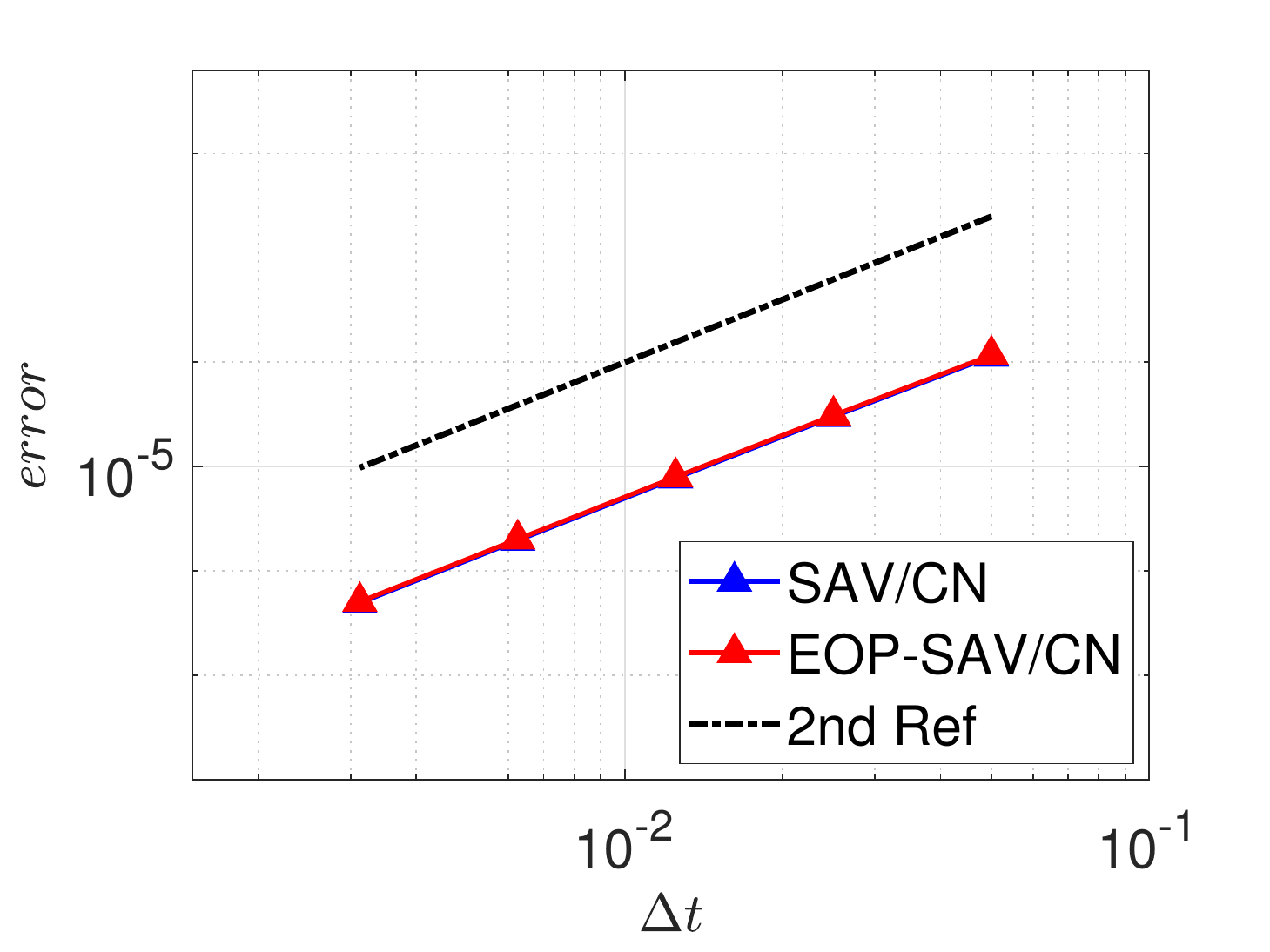}
}
\subfigure[]
{
\includegraphics[width=7cm,height=6cm]{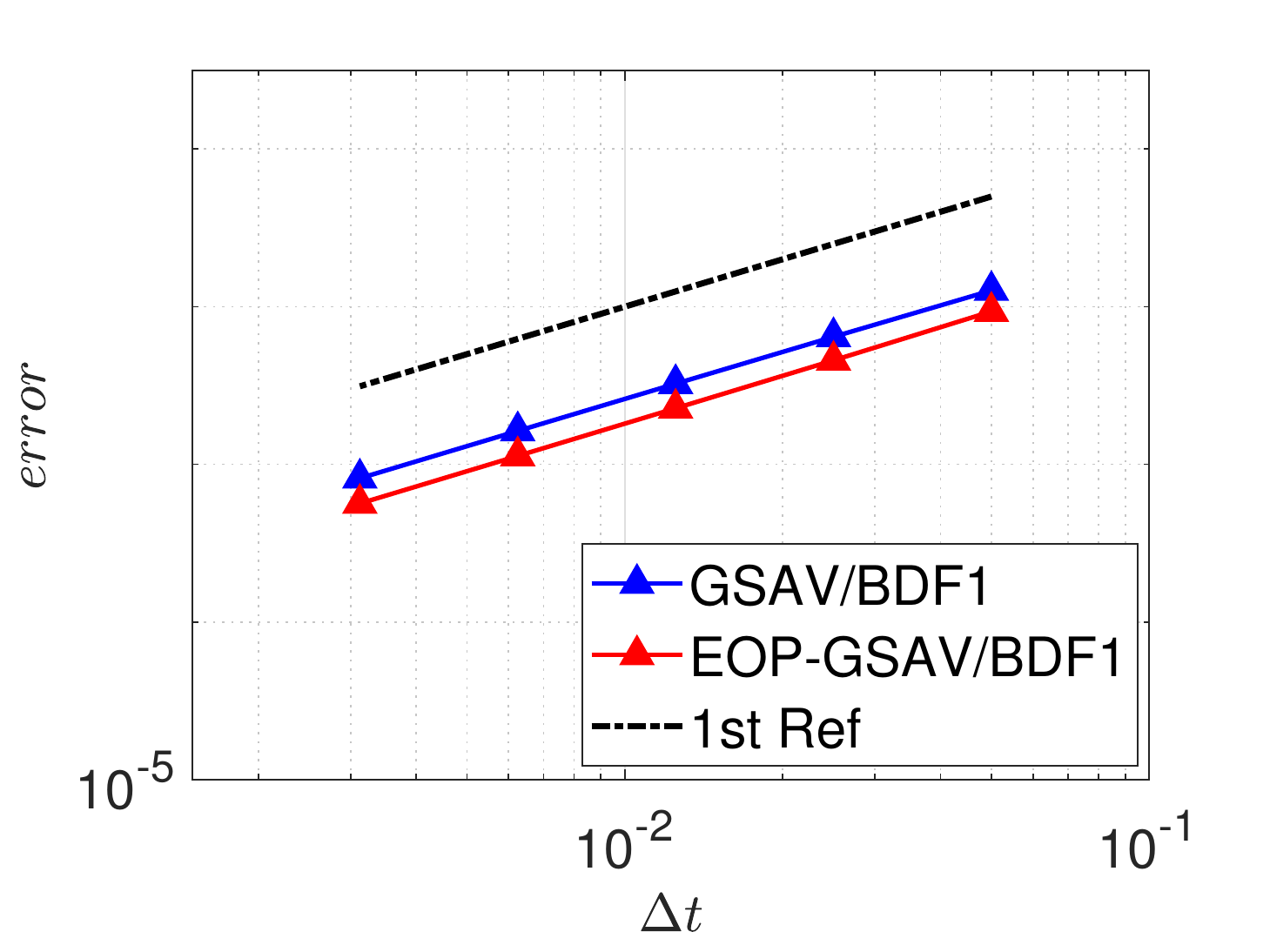}
}
\quad
\subfigure[]
{
\includegraphics[width=7cm,height=6cm]{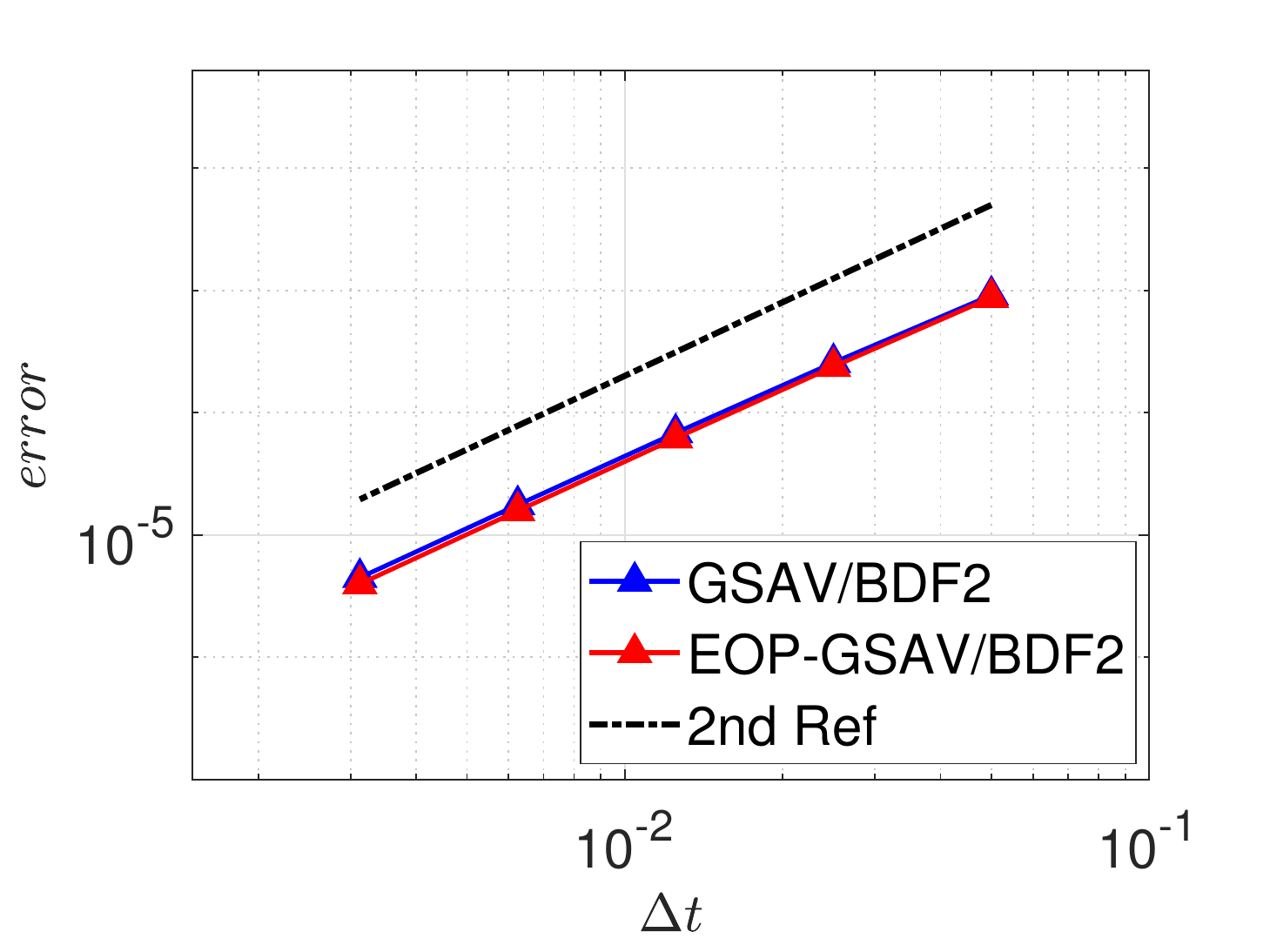}
}
\subfigure[]{
\includegraphics[width=7cm,height=6cm]{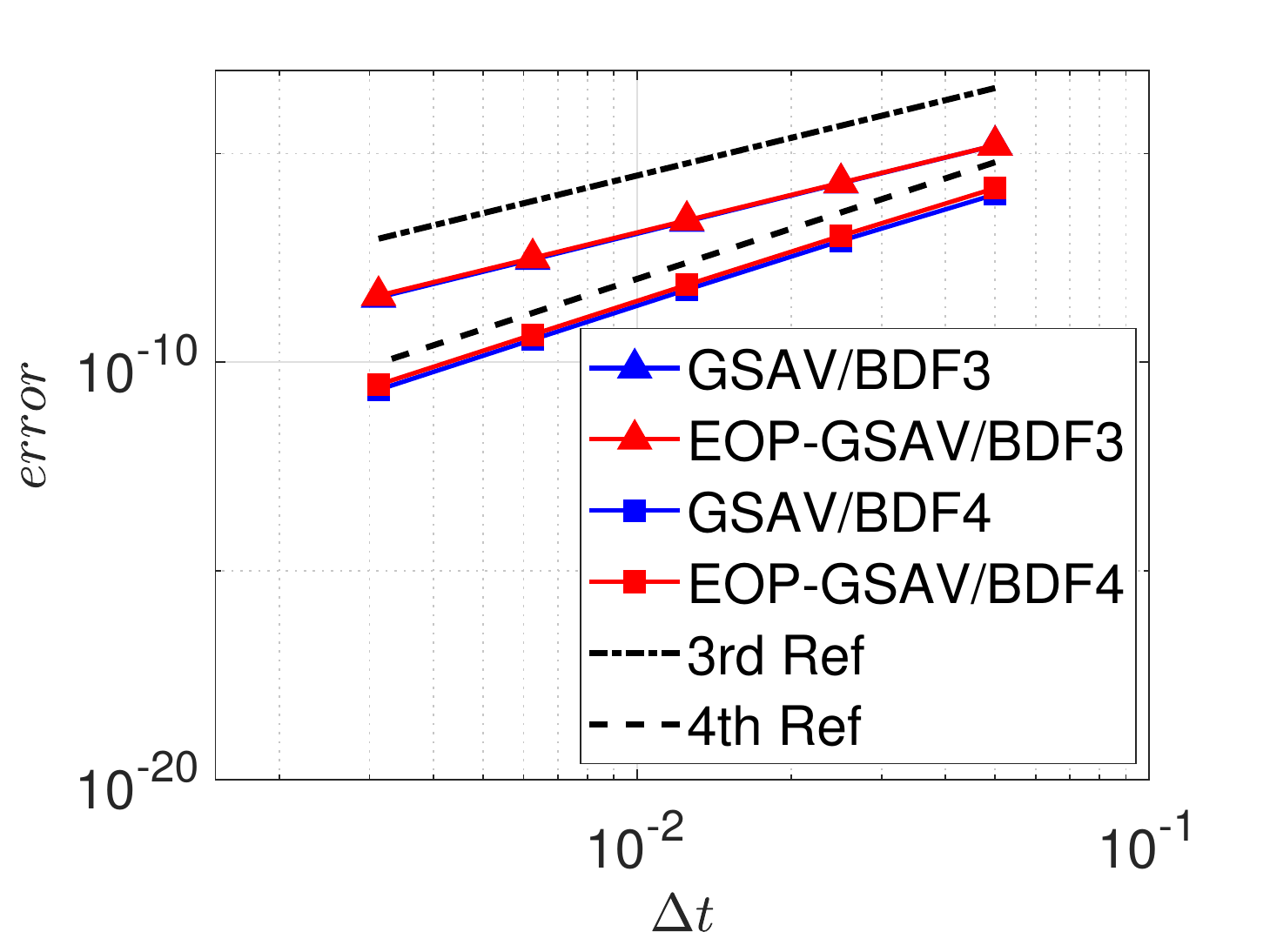}
}
		\caption{Example\,\ref{ex:CH}(Case A). Convergence rates for Cahn-Hilliard equation using various schemes. (a): CN; (b) BDF$1$; (c): BDF$2$; (d): BDF$k$, $(k=3,4)$..}
		\label{Fig:CH-order-test-CN-BDF}
	\end{figure}

\begin{figure}[htbp]
	\centering
	\begin{minipage}{0.4\textwidth}
		\centering
		\includegraphics[width=5.3cm]{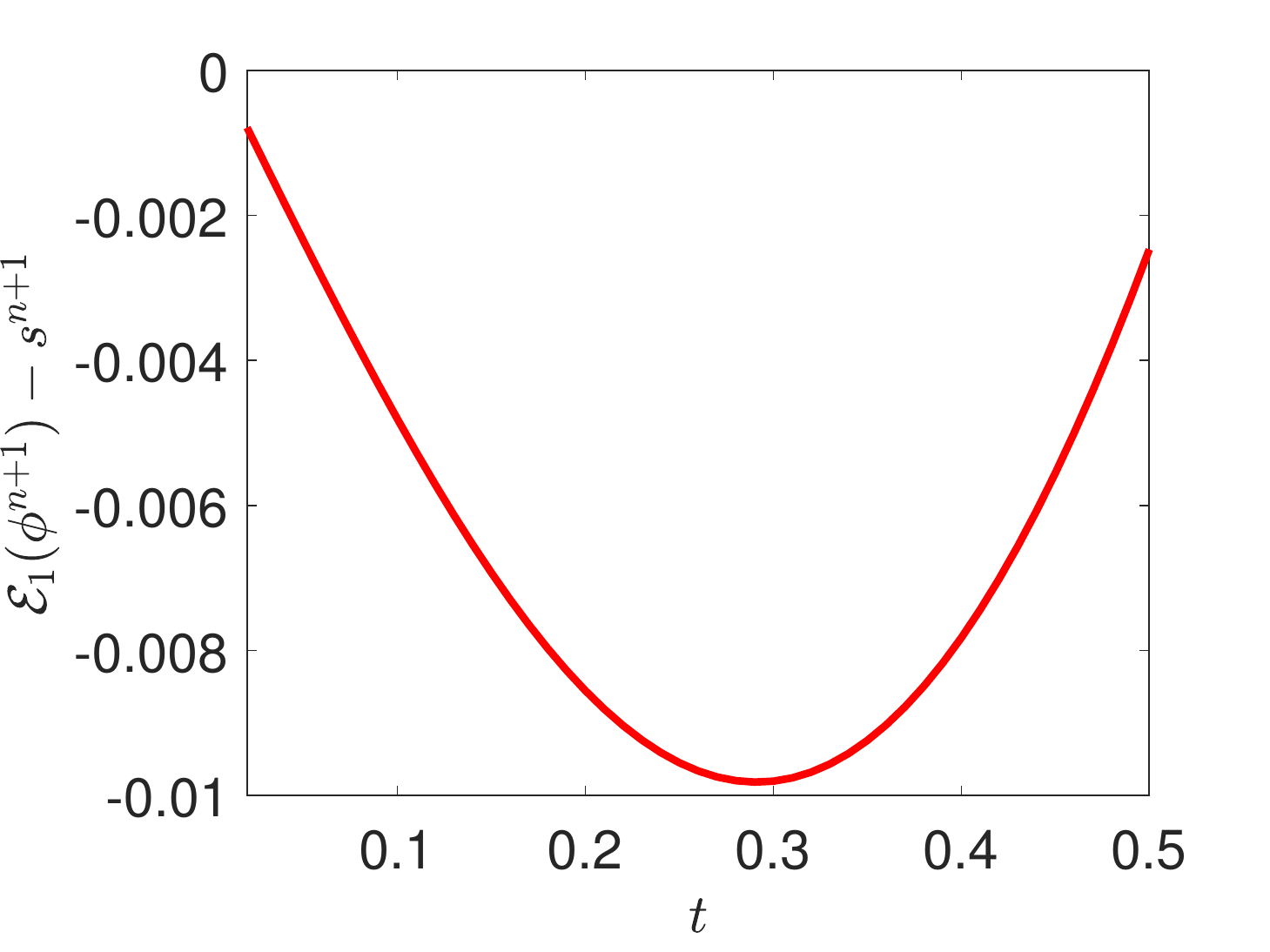}
	\end{minipage}
	\begin{minipage}{0.4\textwidth}
		\centering
		\includegraphics[width=5.3cm]{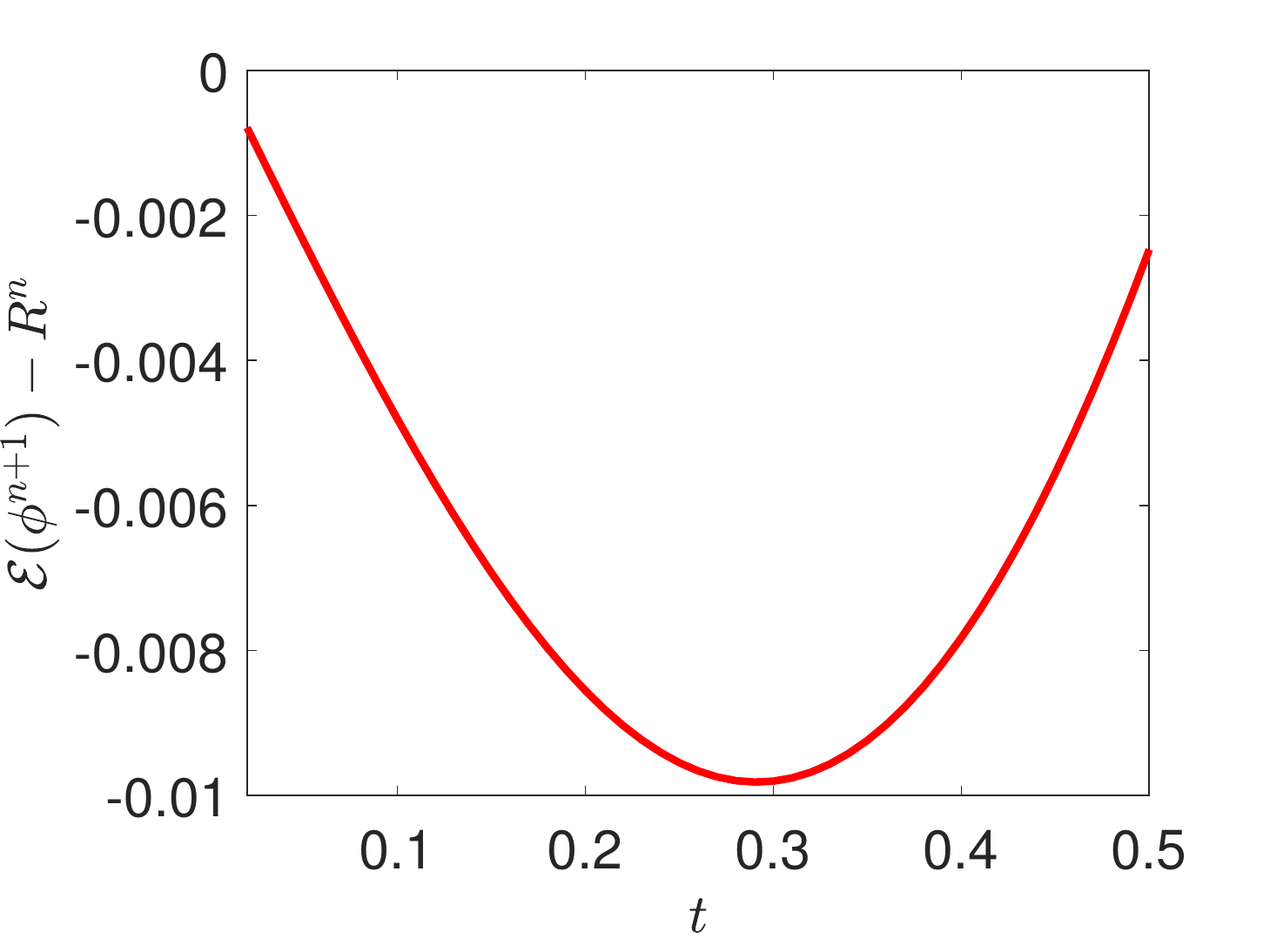}
	\end{minipage}
	\caption{Example\,\ref{ex:CH}(Case A). First: evolution of the difference between the original energy of nonlinear part and $s^{n+1}$ using EOP-SAV/CN scheme with $\Delta t=0.01$; Second: evolution of the difference between the original energy and the modified energy using EOP-GSAV/BDF$2$ scheme with $\Delta t=0.01$.}
	\label{Fig:CH-E1-E}
\end{figure}

{\em Case B.}  As the initial condition, we consider a rectangular arrangement of $9 \times 9$ circles
\begin{equation}
	\phi_{0}(\boldsymbol{x}, t)=80-\sum_{m=1}^{9} \sum_{n=1}^{9} \tanh\left(\frac{\sqrt{\left(x-x_{m}\right)^{2}+\left(y-y_{n}\right)^{2}}-r_{0}}{\sqrt{2} \epsilon}\right),
\end{equation}
where $r_0 = 0.085, x_{m} = 0.2\times m, y_{n} = 0.2\times n$ for $m, n = 1, 2, \cdots, 9$.
For our simulations, we use a computational domain of $[0,2]^2$. The parameters $M$, $\alpha_0$, and $\epsilon$ are set to $1e-6$, $1$, and $0.01$, respectively. We adopt a spatial discretization scheme using $512^2$ Fourier modes. The evolution of a rectangular array of circles governed by the Cahn-Hilliard equation is depicted in Fig.\,\ref{Fig:CH-circles-emGSAV}, obtained using the EOP-SAV/BDF$2$ scheme with a time step of $\Delta t = 1e-3$.

\begin{figure}[htbp]
	\centering
	\begin{minipage}{0.3\textwidth}
		\centering
		\includegraphics[width=5.3cm]{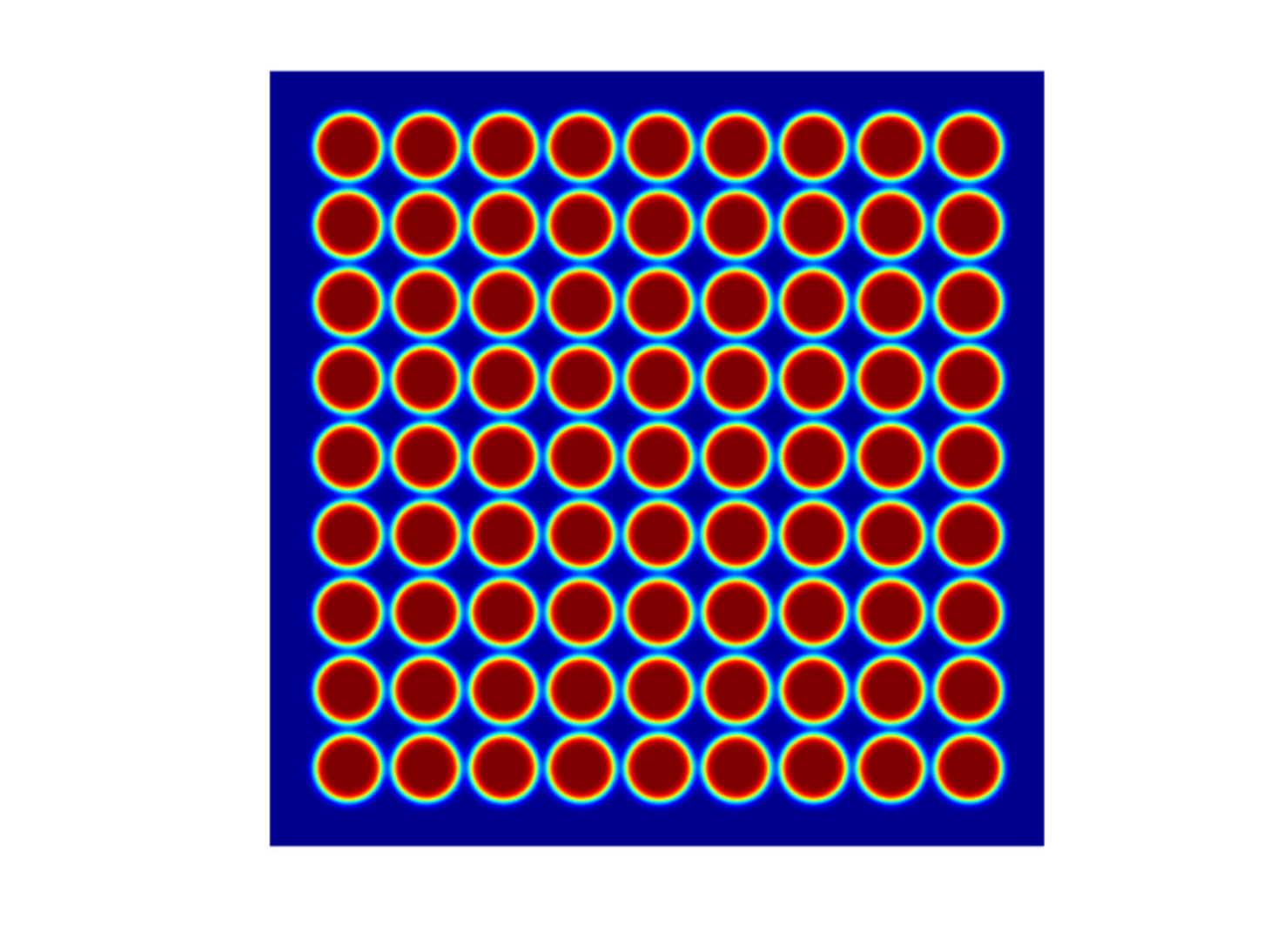}
	\end{minipage}
	\begin{minipage}{0.3\textwidth}
		\centering
		\includegraphics[width=5.3cm]{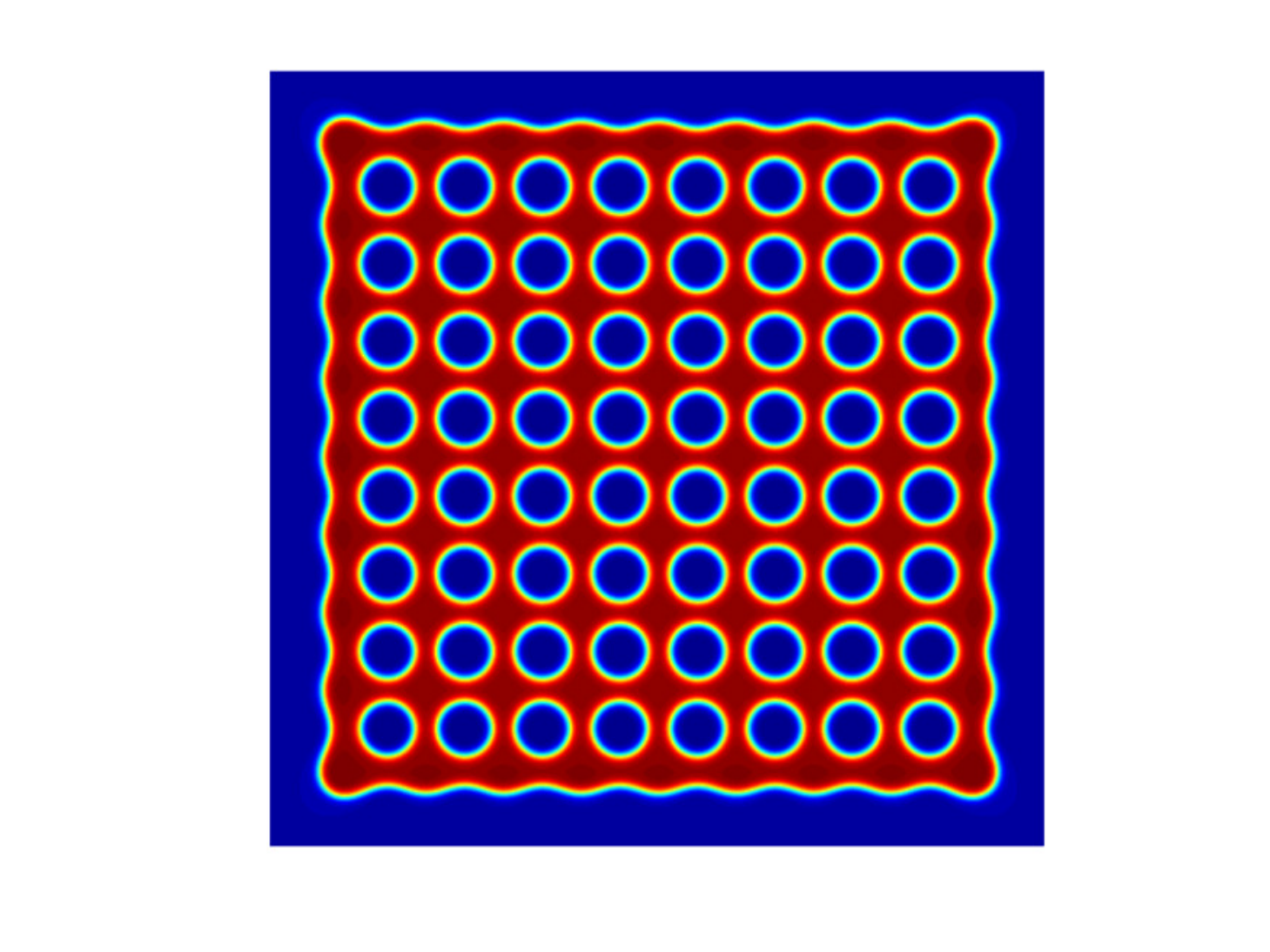}
	\end{minipage}	
	\begin{minipage}{0.3\textwidth}
		\centering
		\includegraphics[width=5.3cm]{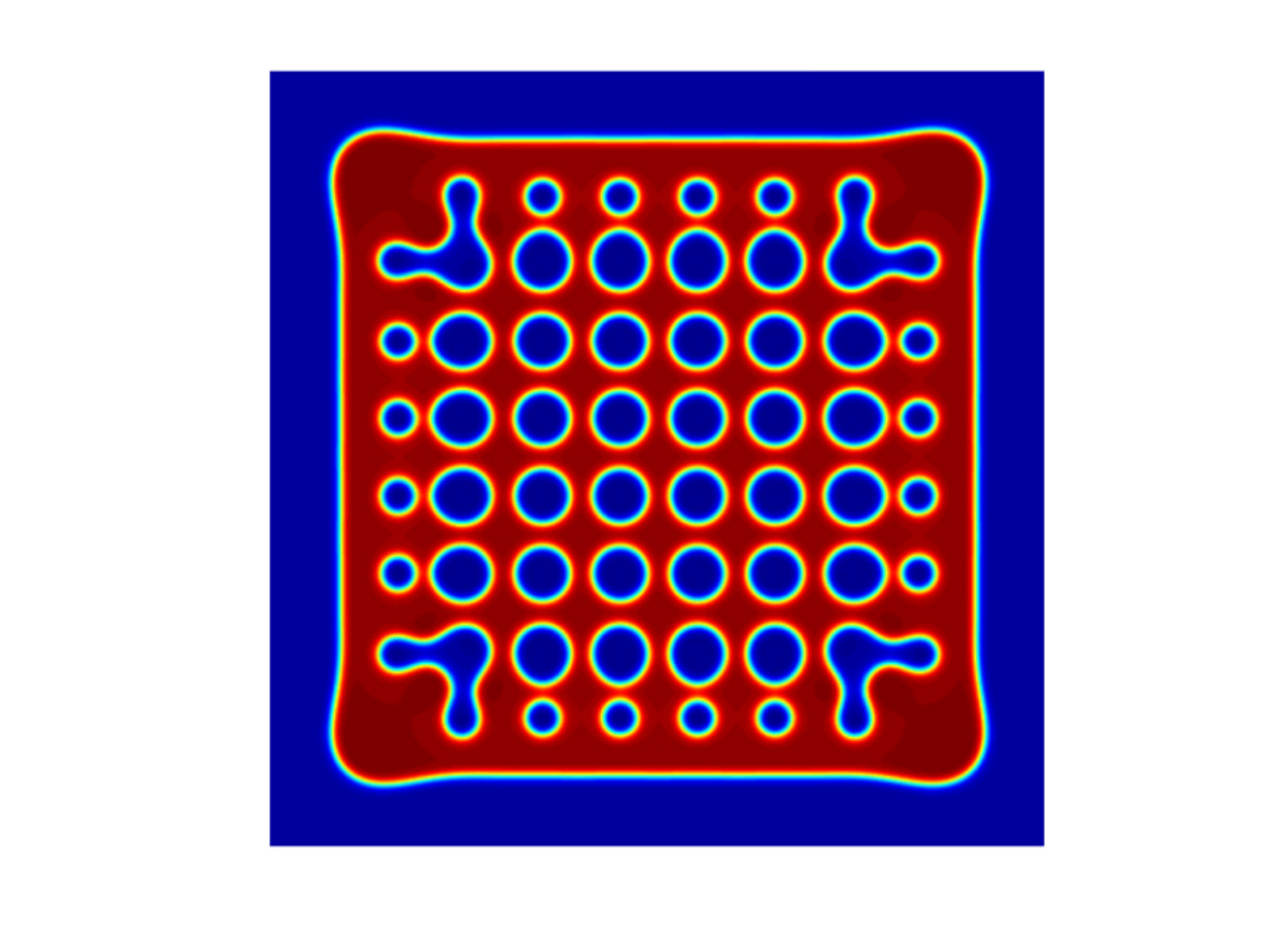}
	\end{minipage}	
	\begin{minipage}{0.3\textwidth}
		\centering
		\includegraphics[width=5.3cm]{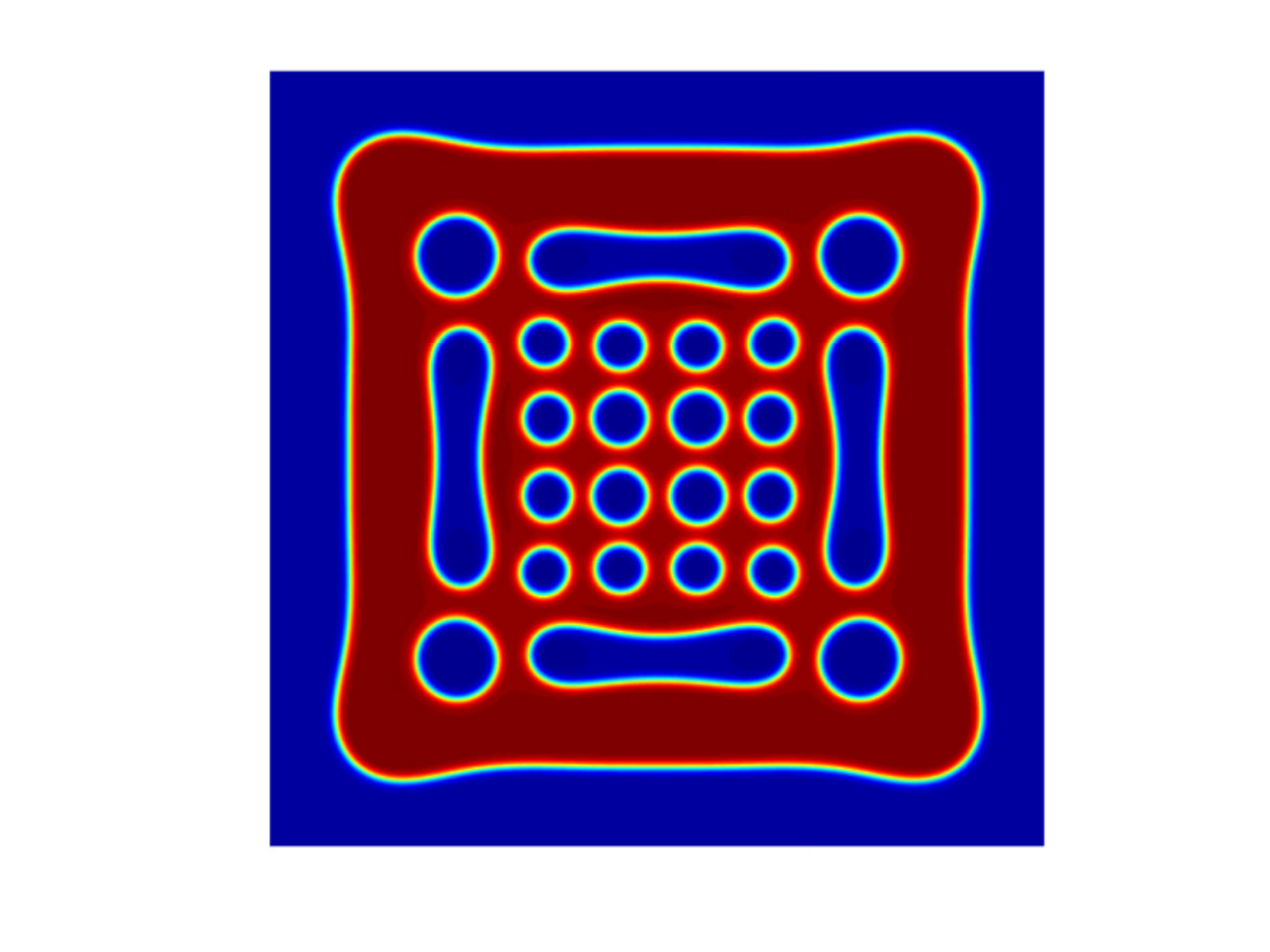}\end{minipage}	
	\begin{minipage}{0.3\textwidth}
		\centering
		\includegraphics[width=5.3cm]{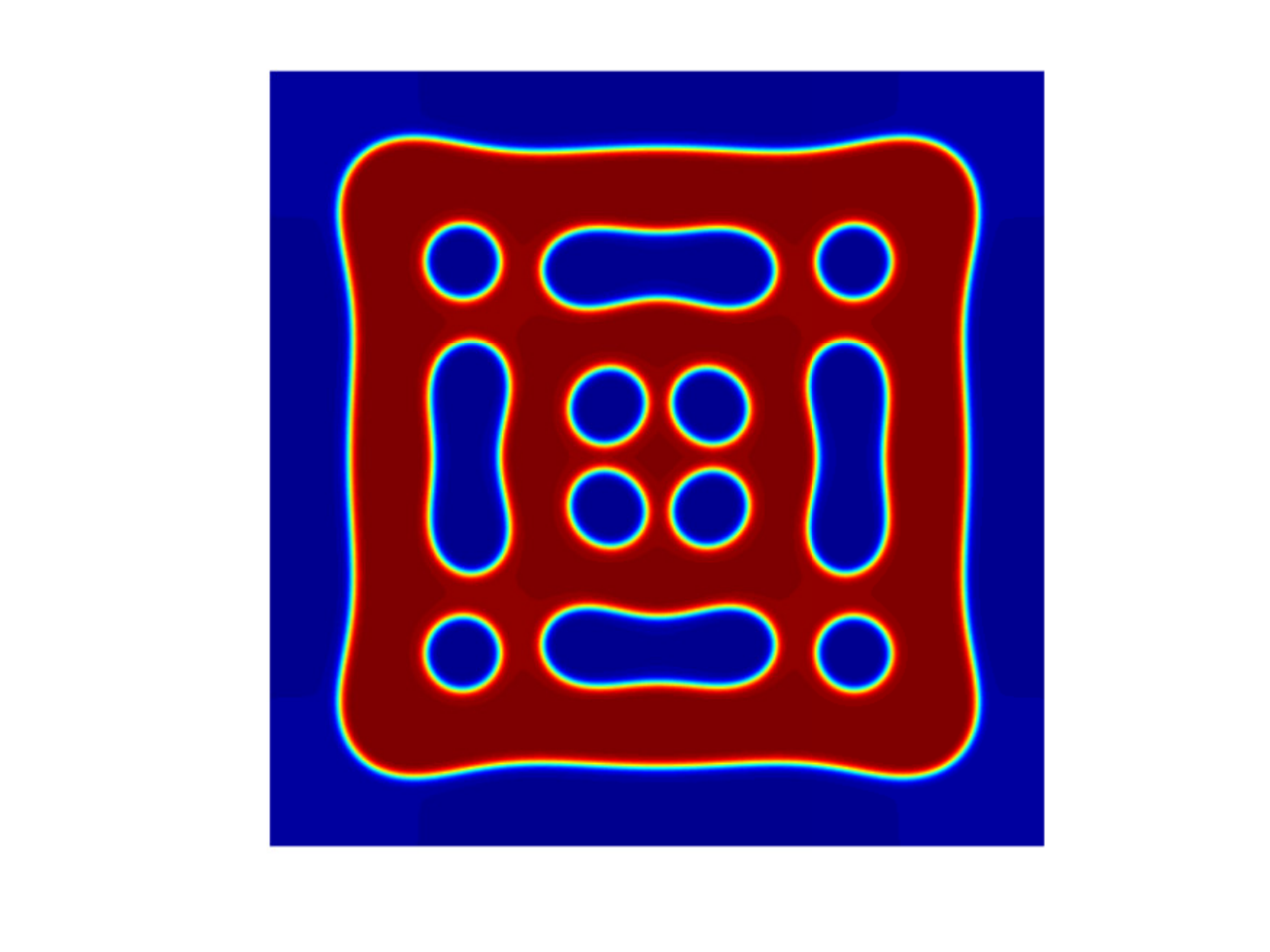}
	\end{minipage}
	\begin{minipage}{0.3\textwidth}
		\centering
		\includegraphics[width=5.3cm]{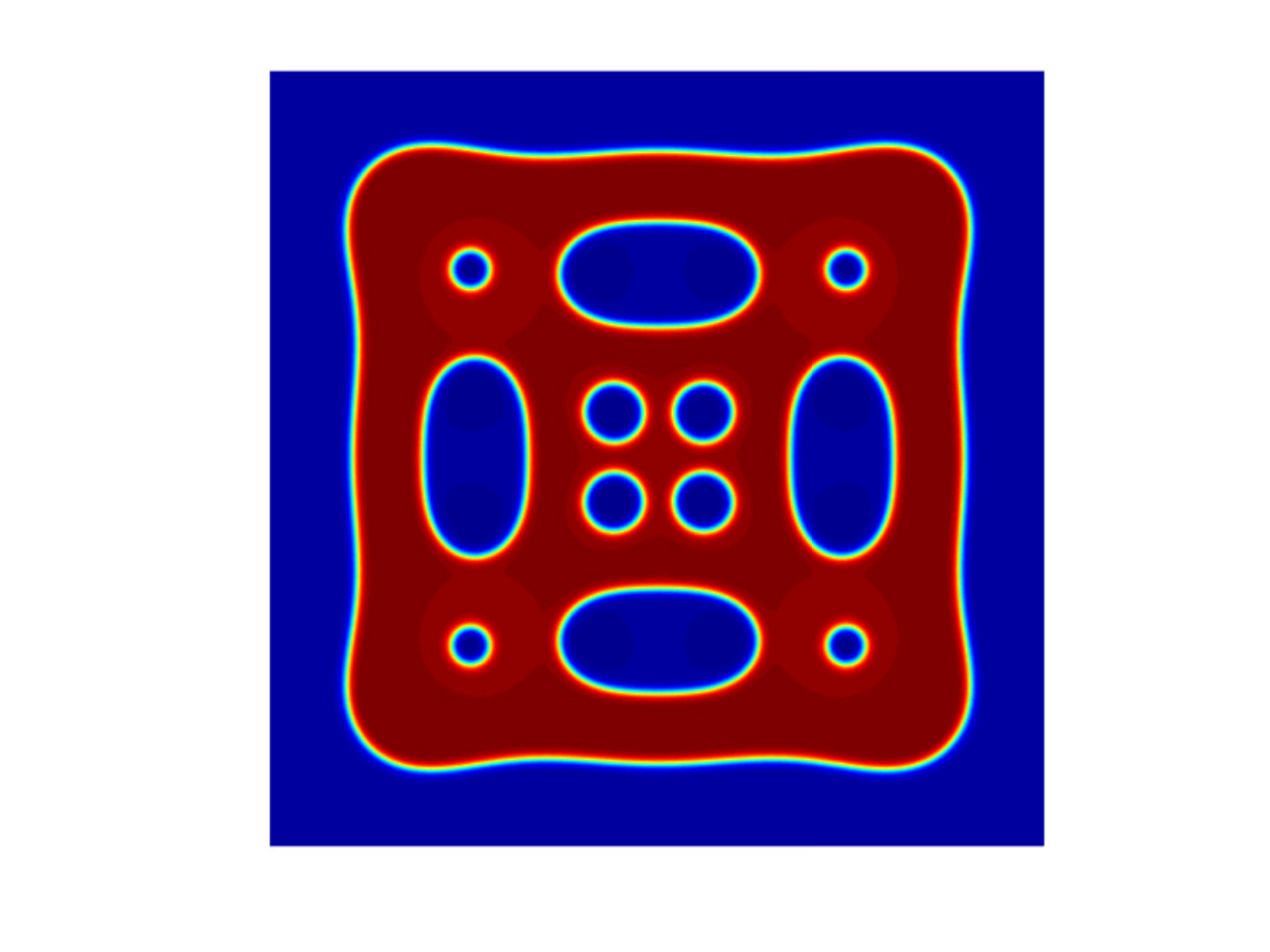}
	\end{minipage}
	\caption{Example\,\ref{ex:CH}(Case B). The dynamic evolution of an array of circles governed by Cahn-Hilliard equation obtained by EOP-GESAV/BDF$2$ scheme.}
	\label{Fig:CH-circles-emGSAV}
\end{figure}

\end{example}

\begin{example}\label{ex:PFC}
	\rm
To demonstrate the versatility of the EOP-GSAV approach in simulating complex nonlinear phenomena, we consider the following phase-field crystal (PFC) model as an illustrative example
\begin{equation}
	\left\{
	\begin{array}{l}\frac{\partial \phi}{\partial t}=M \Delta \mu,  \quad \boldsymbol{x} \in \Omega, t>0, \\
		\mu=(\Delta+\beta)^{2} \phi+\phi^{3}-\epsilon \phi, \quad \boldsymbol{x} \in \Omega, t>0, \\
		\phi(\boldsymbol{x}, 0)=\phi_{0}(\boldsymbol{x}),
	\end{array}\right.
\end{equation}
which is a gradient flow associated with total free energy
\begin{equation}
	E(\phi)=\int_{\Omega}\left(\frac{1}{2} \phi(\Delta+\beta)^{2} \phi+\frac{1}{4} \phi^{4}-\frac{\epsilon}{2} \phi^{2}\right) \mathrm{d} \boldsymbol{x},
\end{equation}
where $M>0$ is the mobility coefficient.
In the following simulations, we choose $M=1, \beta=1$.

{\em Case A.}  We consider the problem of crystal growth in a two-dimensional super-cooled liquid. The initial condition is set to
\begin{equation}
	\phi\left(x_{l}, y_{l}, 0\right)=\bar{\phi}+C_{1}\left(\cos \left(\frac{C_{2}}{\sqrt{3}} y_{l}\right) \cos \left(C_{2} x_{l}\right)-0.5 \cos \left(\frac{2 C_{2}}{\sqrt{3}} y_{l}\right)\right), \quad l=1,2,3,
\end{equation}
where the local system of Cartesian coordinates is defined by $x_l$ and $y_l$, oriented with the crystallite lattice. The constant parameters $\bar{\phi} = 0.285$, $C_{1} = 0.446$, and $C_{2} = 0.66$ are also specified.
To simulate the growth of crystals, we define three crystallites in three small square patches, each with a side length of $40$, located at the coordinates $(350, 400)$, $(200, 200)$, and $(600, 300)$, respectively. These crystallites are initialized perfectly.
To generate crystallites with various orientations, we utilize the following affine transformation to induce rotation
\begin{equation}
	x_{l}(x, y)=x \sin (\theta)+y \cos (\theta), \quad y_{l}(x, y)=-x \cos (\theta)+y \sin (\theta),
\end{equation}
where angles are chosen as $\theta=-\frac{\pi}{4}, 0, \frac{\pi}{4}$ respectively. We discretize the space using $1024^2$ Fourier modes and adopt a relatively small time step of $\Delta t =0.02$ to ensure higher accuracy.
We set the remaining parameters as $\epsilon=0.25$ and $T=2000$.
Fig.\,\ref{Fig:PFC-emGSAV-2D} depicts the crystal growth in a super-cooled liquid driven by the PFC equation using the EOP-GSAV/BDF$2$ scheme.
The simulation results illustrate that the different orientations of the crystallites lead to defects and dislocations, consistent with findings in \cite{li2020stability, yang2017linearly}.
Notably, the modified energy in this example is equal to the original energy.

\begin{figure}[htbp]
	\centering
	\begin{minipage}{0.24\textwidth}
		\centering
		\includegraphics[width=4.7cm]{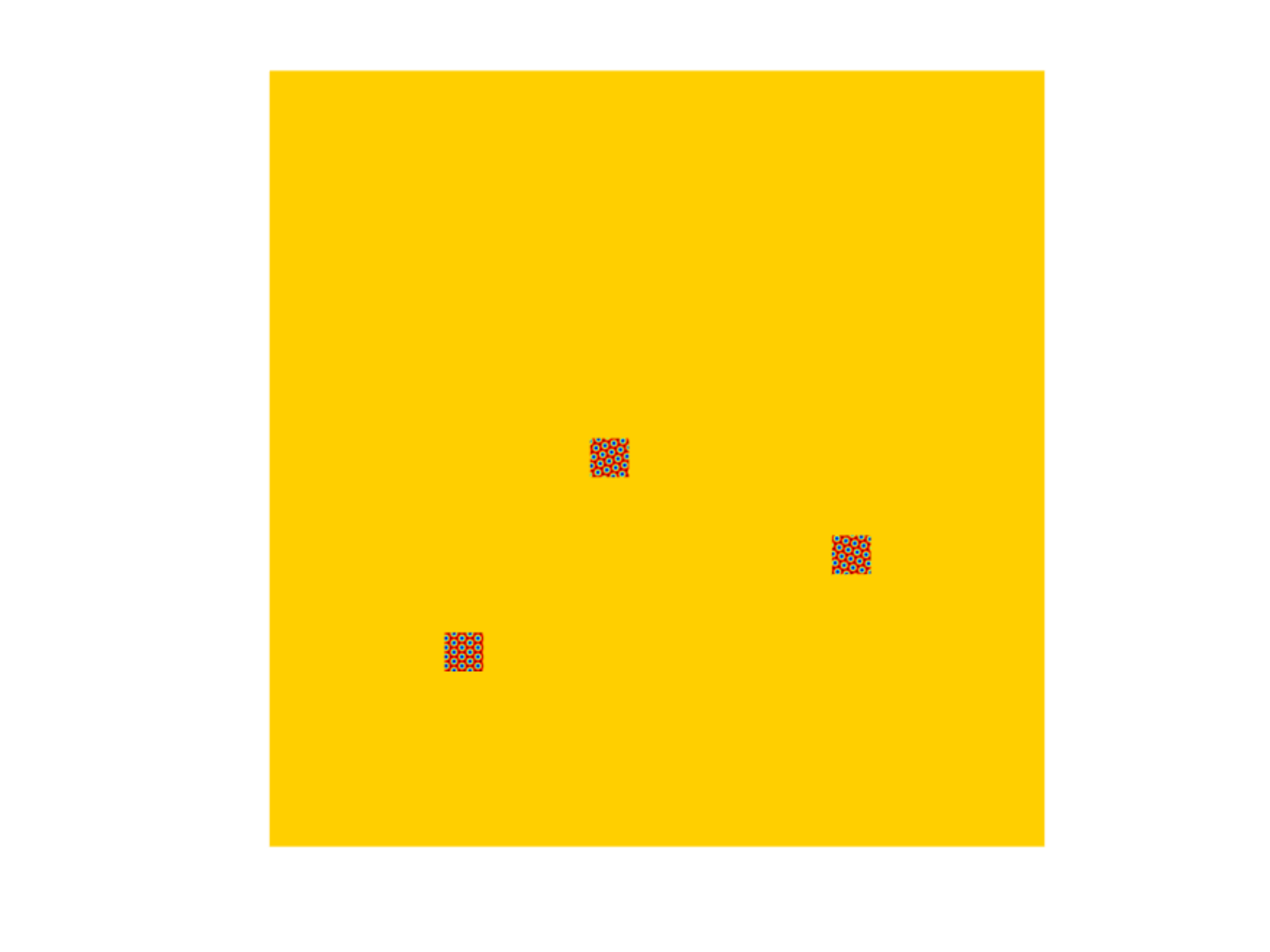}
	\end{minipage}	
	\begin{minipage}{0.24\textwidth}
		\centering
		\includegraphics[width=4.7cm]{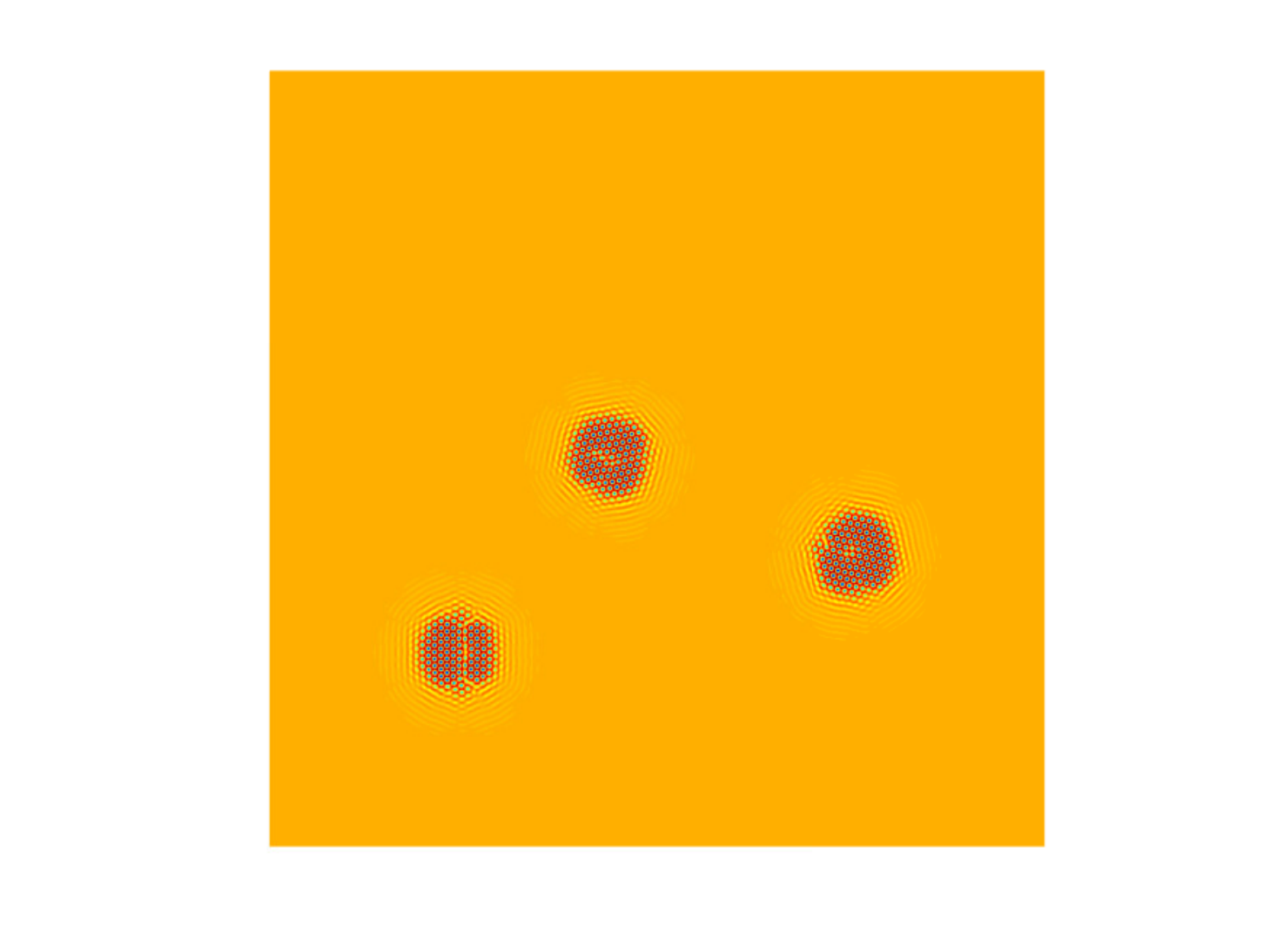}
	\end{minipage}
	\begin{minipage}{0.24\textwidth}
		\centering
		\includegraphics[width=4.7cm]{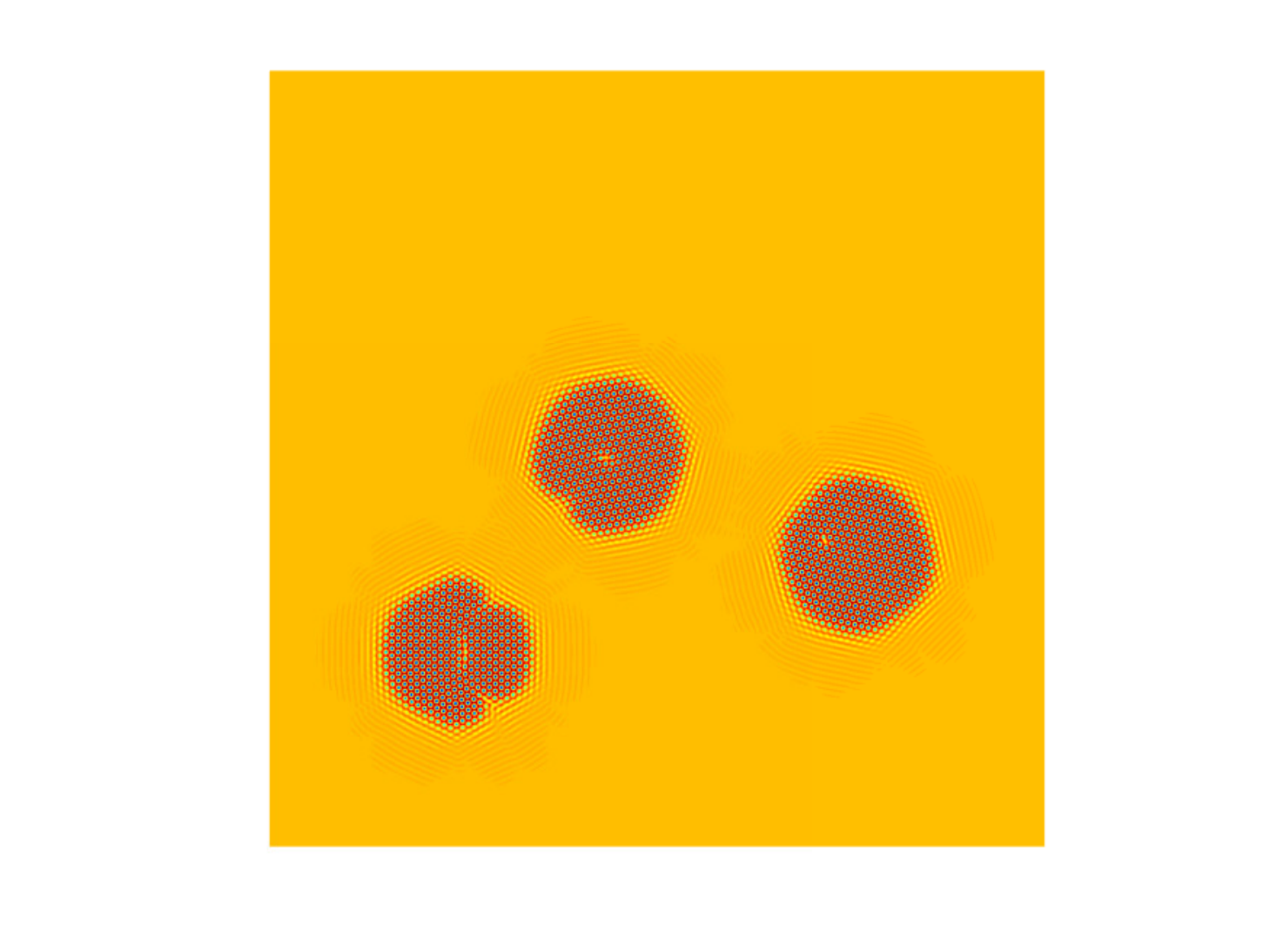}
	\end{minipage}
	\begin{minipage}{0.24\textwidth}
		\centering
		\includegraphics[width=4.7cm]{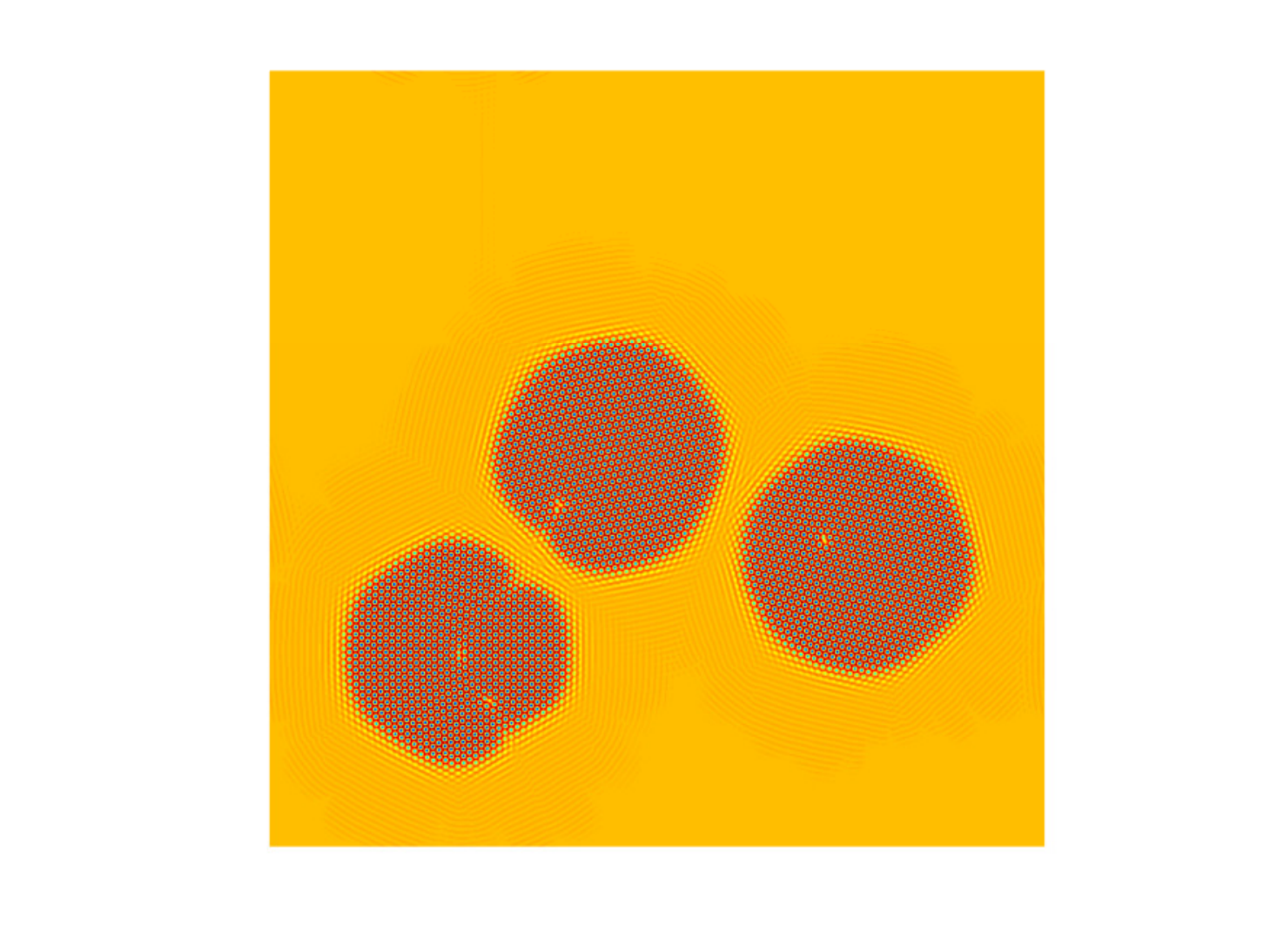}
	\end{minipage}
	\begin{minipage}{0.24\textwidth}
		\centering
		\includegraphics[width=4.7cm]{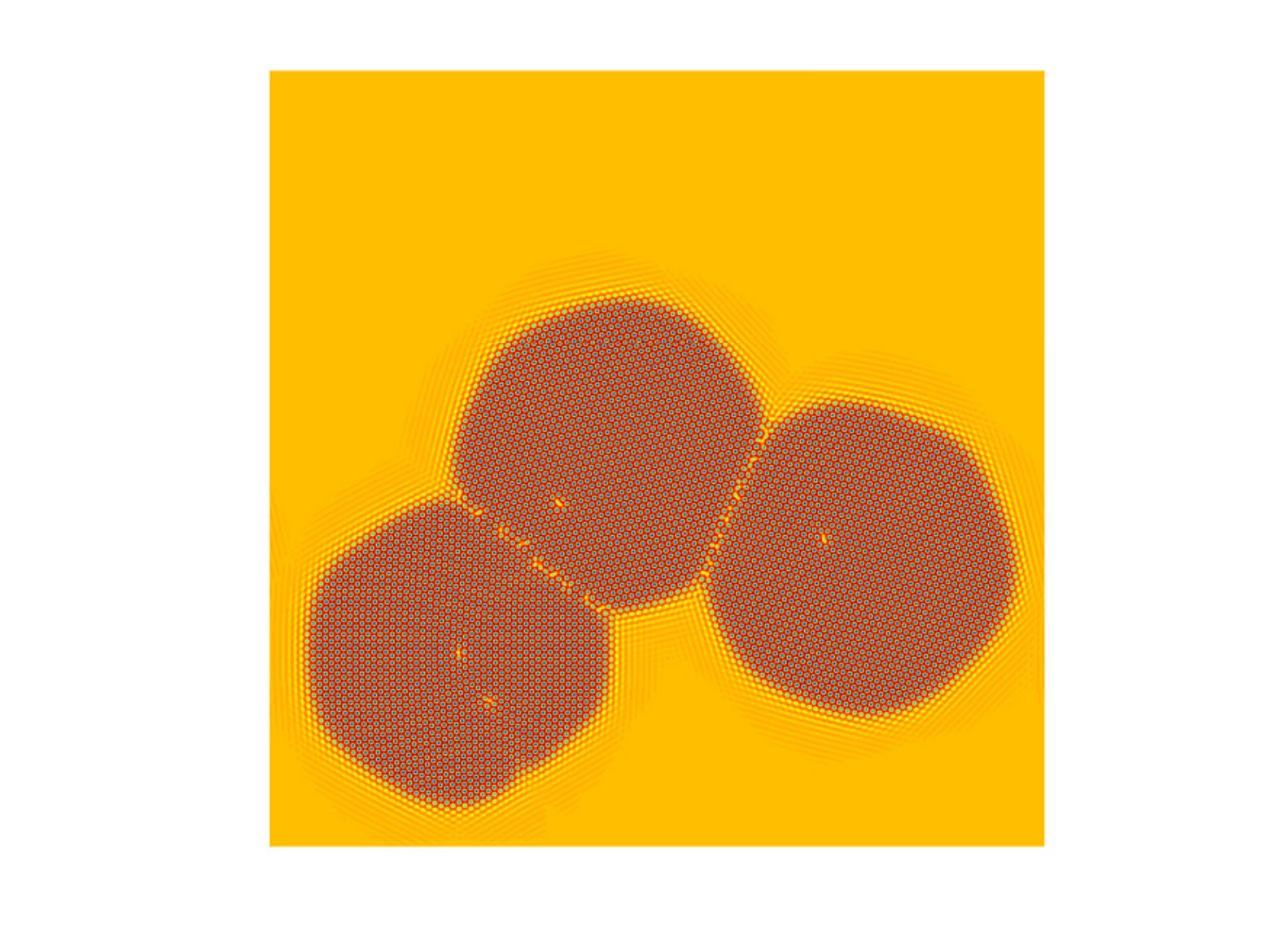}
	\end{minipage}
	\begin{minipage}{0.24\textwidth}
		\centering
		\includegraphics[width=4.7cm]{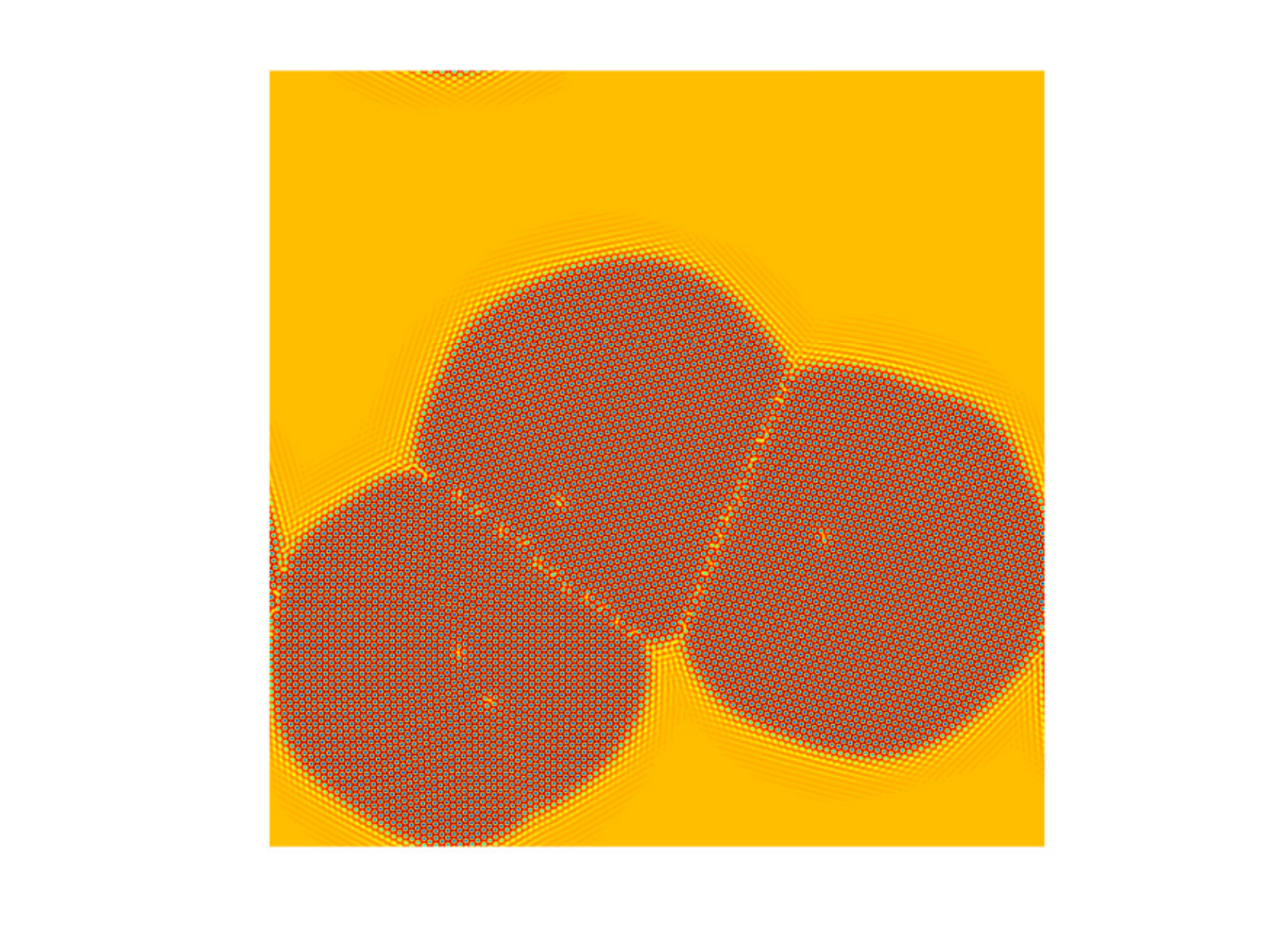}
	\end{minipage}
	\begin{minipage}{0.24\textwidth}
		\centering
		\includegraphics[width=4.7cm]{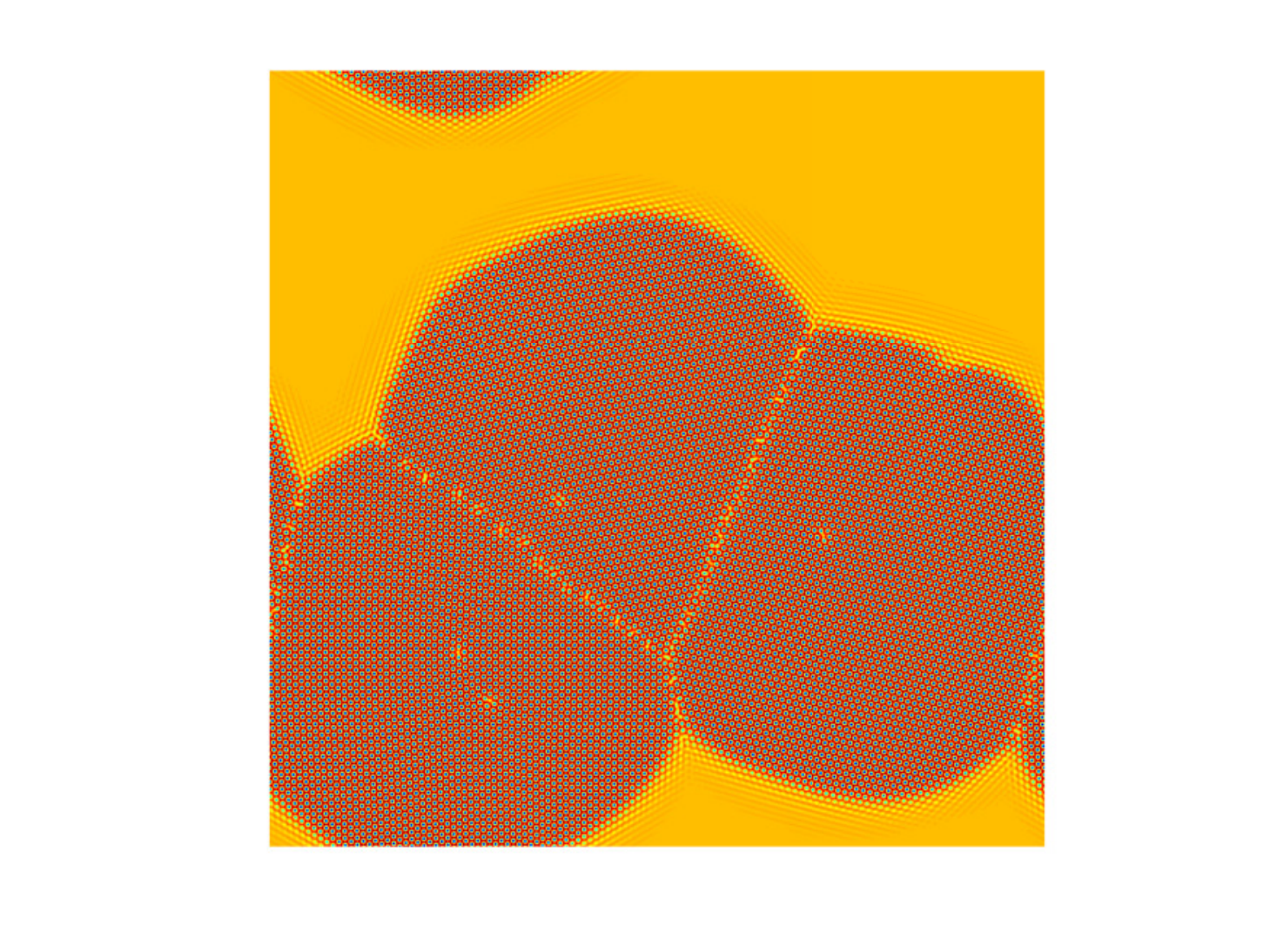}
	\end{minipage}	
	\begin{minipage}{0.24\textwidth}
		\centering
		\includegraphics[width=4.7cm]{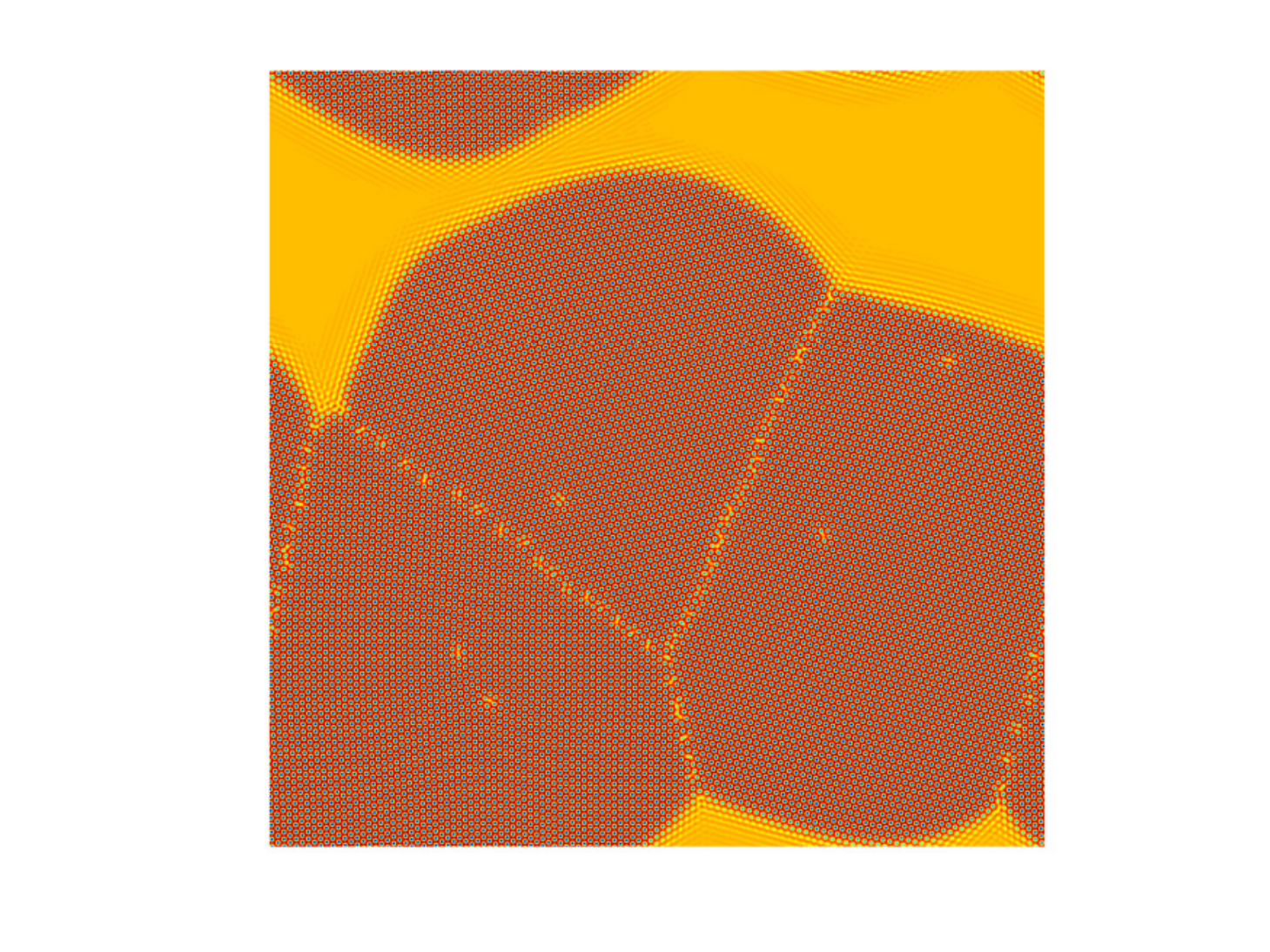}
	\end{minipage}
	\begin{minipage}{0.24\textwidth}
		\centering
		\includegraphics[width=4.7cm]{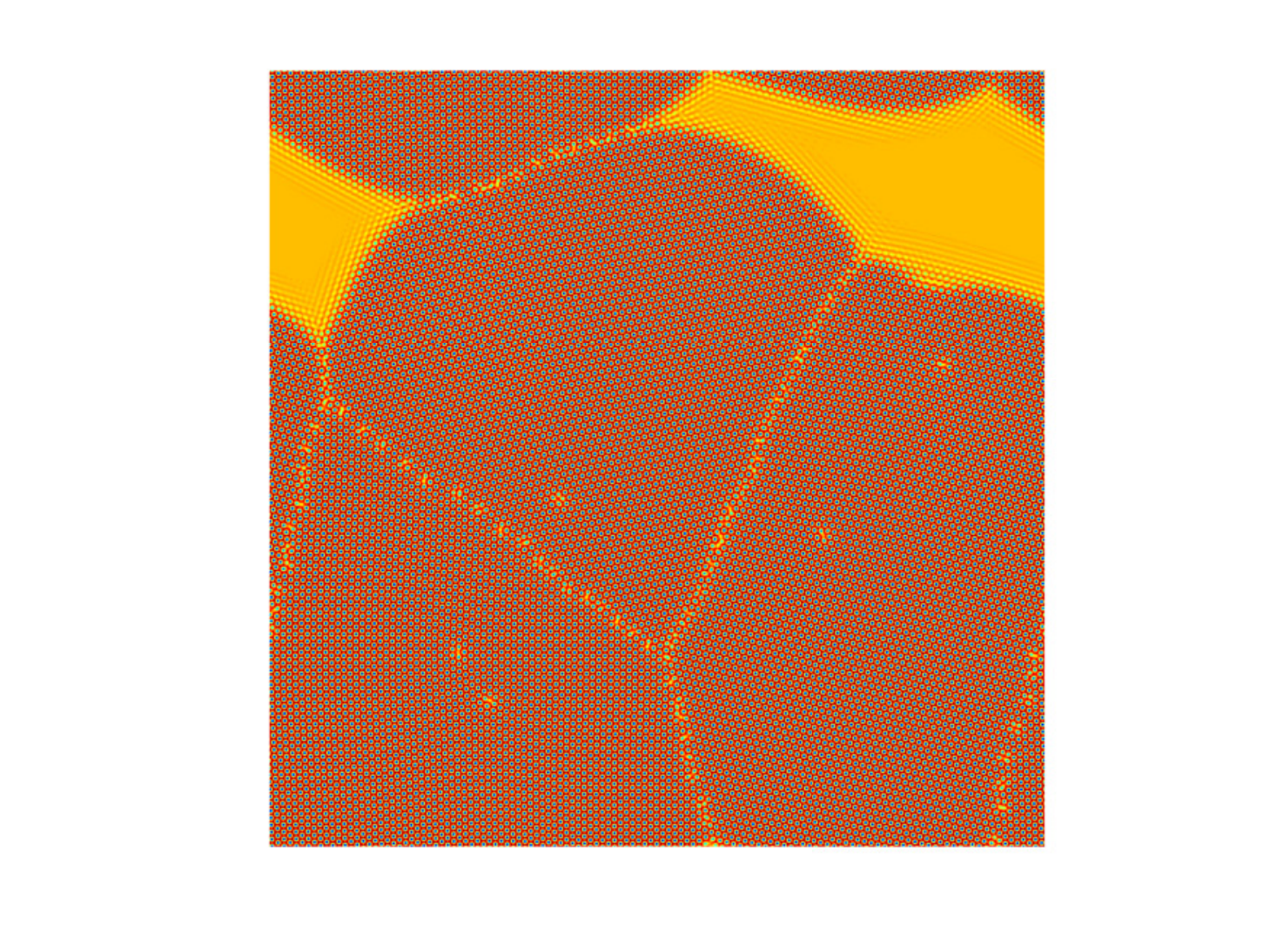}
	\end{minipage}
	\begin{minipage}{0.24\textwidth}
		\centering
		\includegraphics[width=4.7cm]{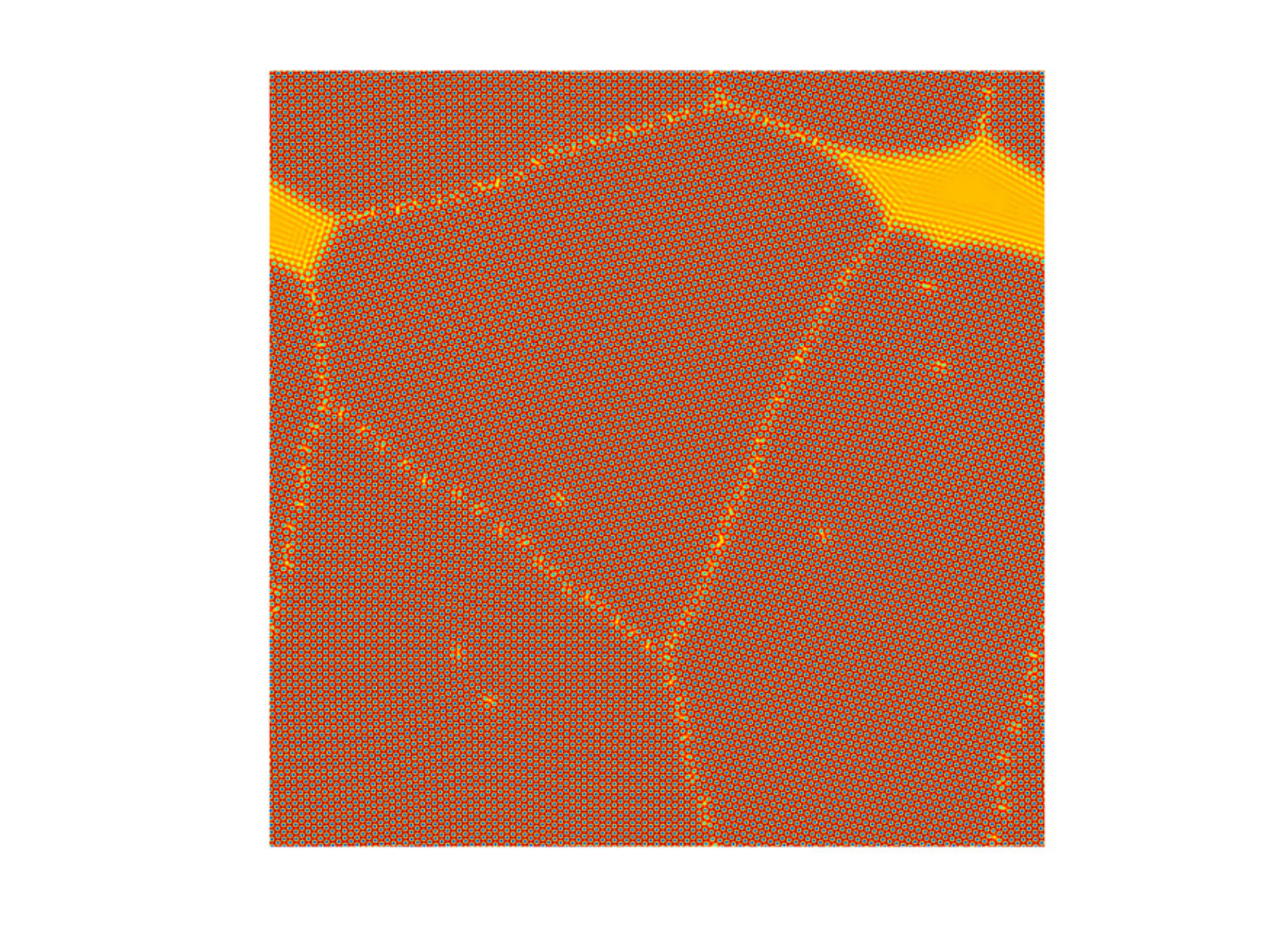}
	\end{minipage}
	\begin{minipage}{0.24\textwidth}
		\centering
		\includegraphics[width=4.7cm]{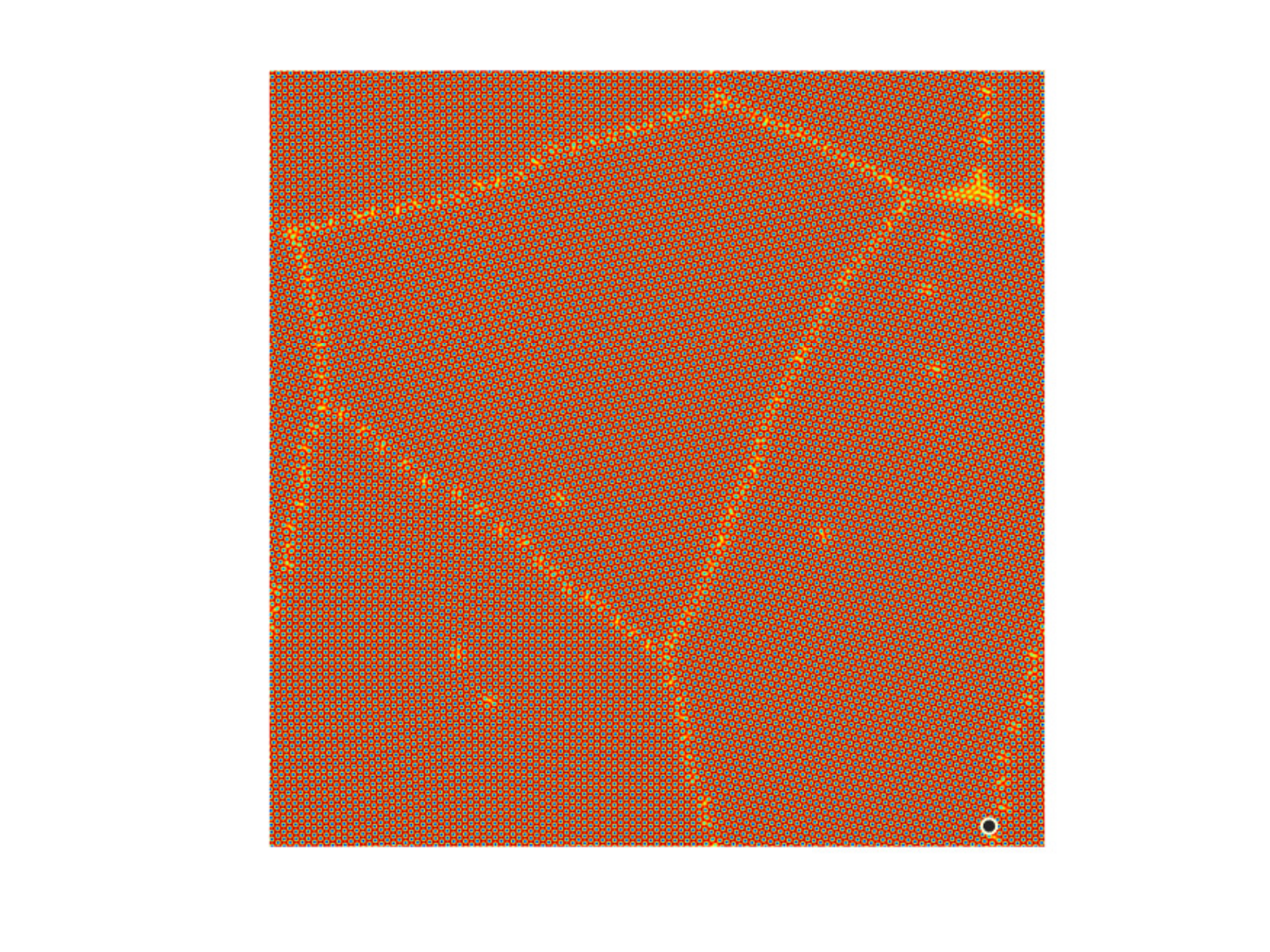}
	\end{minipage}
	\begin{minipage}{0.24\textwidth}
		\centering
		\includegraphics[width=4.7cm]{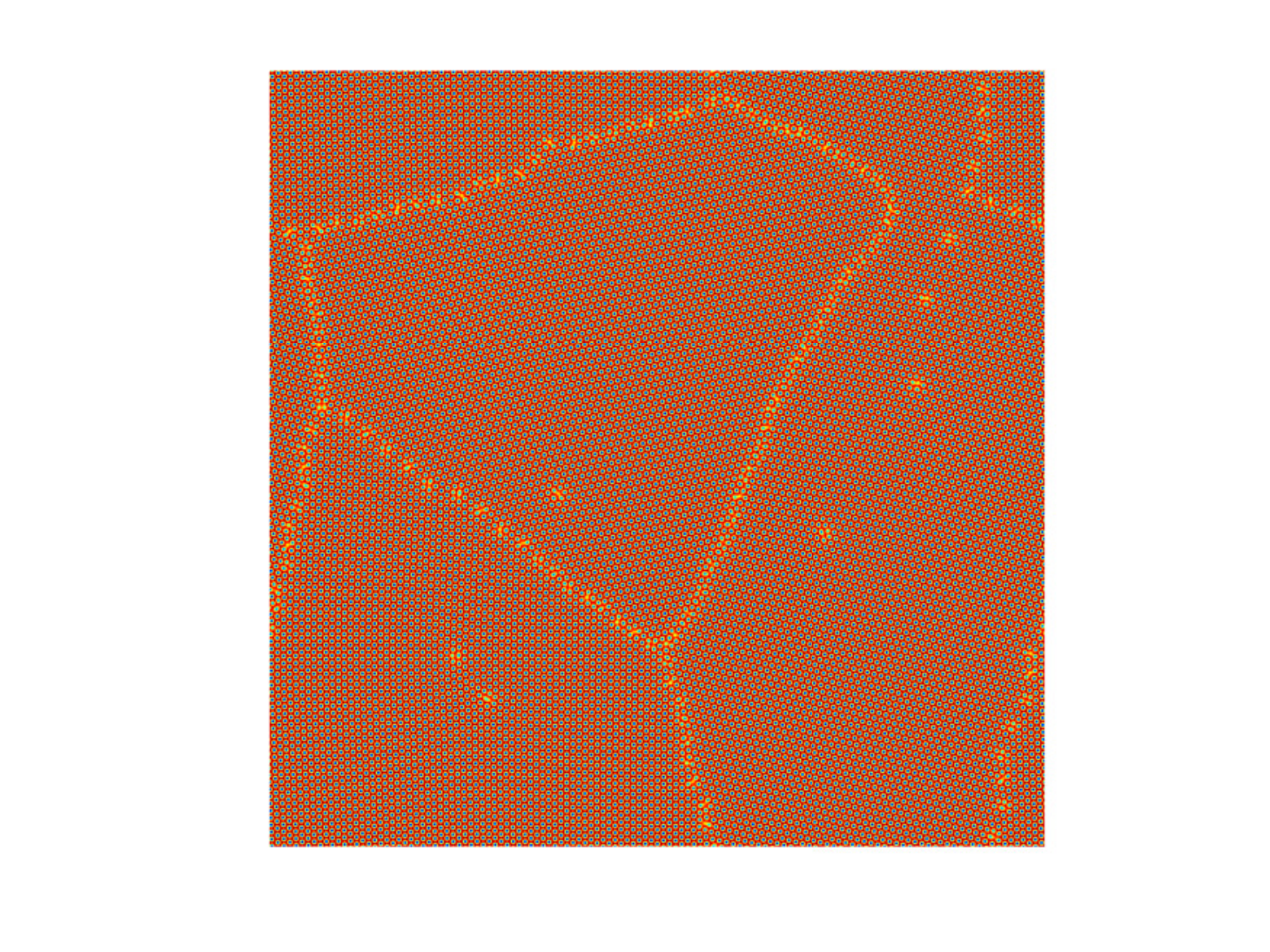}
	\end{minipage}
	\caption{Example\,\ref{ex:PFC}(Case A). Dynamics evolution of crystal growth in a supercooled liquid driven by the PFC equation using EOP-GSAV/BDF$2$ scheme. Snapshots of the numerical solution $\phi$ at $T=0,$ $100,$ $200,$ $300,$ $400,$ $500,$ $600,$ $700,$ $800,$ $900,$ $1000,$ $2000,$ respectively.}
	\label{Fig:PFC-emGSAV-2D}
\end{figure}		
	
%\begin{figure}[htbp]
%	\centering
%	\begin{minipage}{0.24\textwidth}
%		\centering
%		\includegraphics[width=4.7cm]{Figures/PFC//PFC_emGSAV_dt_0d02_N_1024_T_0.eps}
%	\end{minipage}	
%	\begin{minipage}{0.24\textwidth}
%		\centering
%		\includegraphics[width=4.7cm]{Figures/PFC//PFC_emGSAV_dt_0d02_N_1024_T_100.eps}
%	\end{minipage}
%	\begin{minipage}{0.24\textwidth}
%		\centering
%		\includegraphics[width=4.7cm]{Figures/PFC//PFC_emGSAV_dt_0d02_N_1024_T_300.eps}
%	\end{minipage}
%	\begin{minipage}{0.24\textwidth}
%		\centering
%		\includegraphics[width=4.7cm]{Figures/PFC//PFC_emGSAV_dt_0d02_N_1024_T_500.eps}
%	\end{minipage}
%	\begin{minipage}{0.24\textwidth}
%		\centering
%		\includegraphics[width=4.7cm]{Figures/PFC//PFC_emGSAV_dt_0d02_N_1024_T_600.eps}
%	\end{minipage}	
%	\begin{minipage}{0.24\textwidth}
%		\centering
%		\includegraphics[width=4.7cm]{Figures/PFC//PFC_emGSAV_dt_0d02_N_1024_T_800.eps}
%	\end{minipage}
%	\begin{minipage}{0.24\textwidth}
%		\centering
%		\includegraphics[width=4.7cm]{Figures/PFC//PFC_emGSAV_dt_0d02_N_1024_T_1000.eps}
%	\end{minipage}
%	\begin{minipage}{0.24\textwidth}
%		\centering
%		\includegraphics[width=4.7cm]{Figures/PFC//PFC_emGSAV_dt_0d02_N_1024_T_2000.eps}
%	\end{minipage}
%	\label{Fig:PFC-emGSAV-2D}
%	\caption{Example\,\ref{ex:PFC}(Case A). Dynamics evolution of crystal growth in a supercooled liquid driven by the PFC equation using EOP-GSAV/BDF$2$ scheme. Snapshots of the numerical solution $\phi$ at $T=0,$ $100,$ $300,$ $500,$ $600,$ $800,$ $1000,$ $2000,$ respectively.}
%\end{figure}	

{\em Case B.} We investigate the phase transition behaviors in three-dimensional systems using the PFC equation.
We initialize the system with the data $\phi(x, y, t=0)=\bar{\phi}+0.01rand$, where $rand$ is the uniformly distributed random number in $[-1, 1]$ with zeros mean, and set the computational domain to $\left[0, 50\right]^{3}$.
The parameters $\epsilon=0.56$, $\delta=0.02$, $T=3000$, and $64^3$ Fourier modes are chosen for the simulations.
Fig.\,\ref{Fig:PFC-emGSAV-rand-3D} displays the steady-state microstructure of the phase transition behavior for $\bar{\phi}=0.20, 0.35$, and $0.43$, respectively. These results are consistent with those reported in \cite{li2019efficient}.

\begin{figure}[htbp]
	\centering
	\subfigure[$\bar{\phi}=0.2$]{
		%	\hspace{-9mm}
		\includegraphics[width=5.3cm]{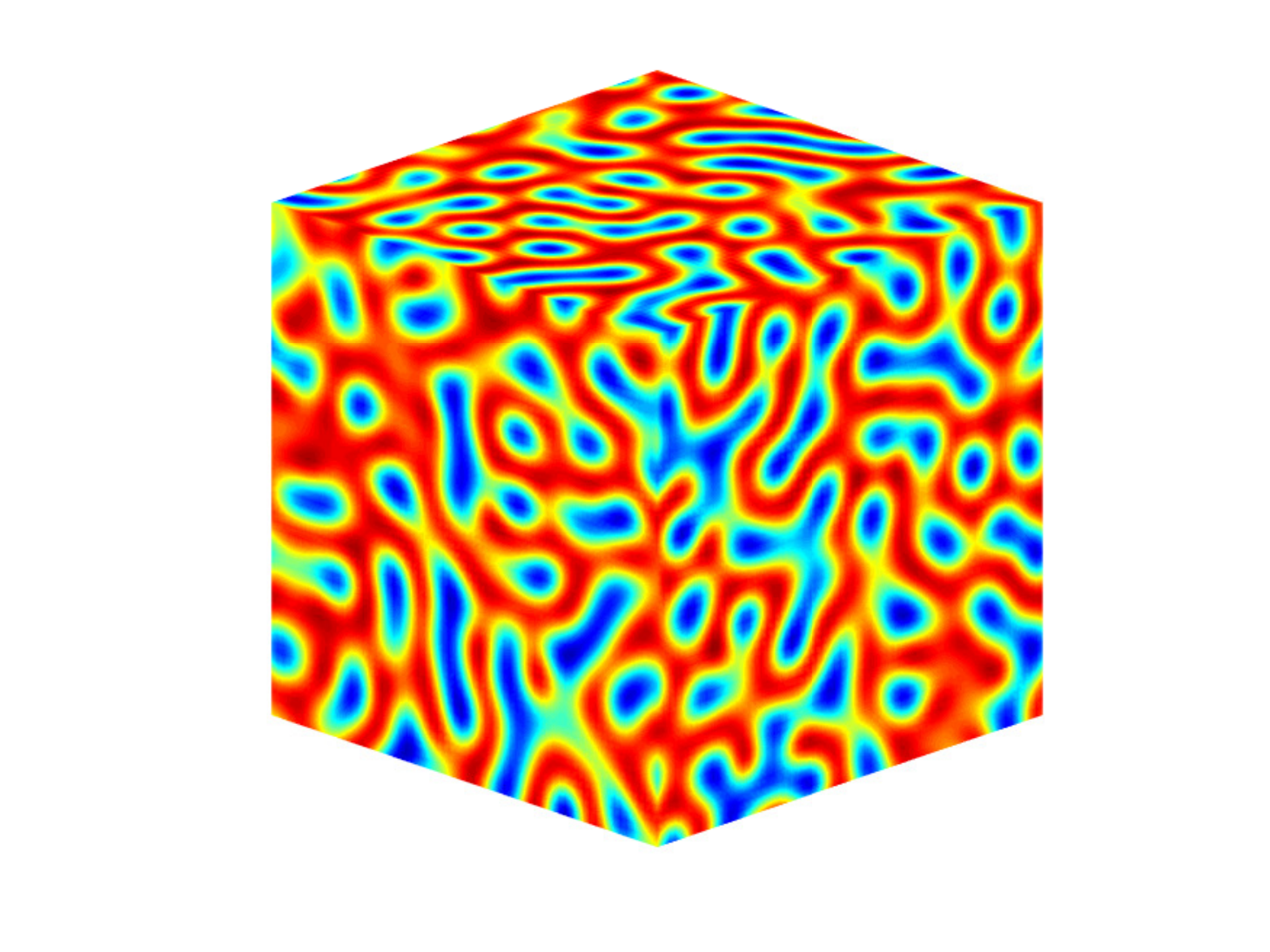}
		\includegraphics[width=5.3cm]{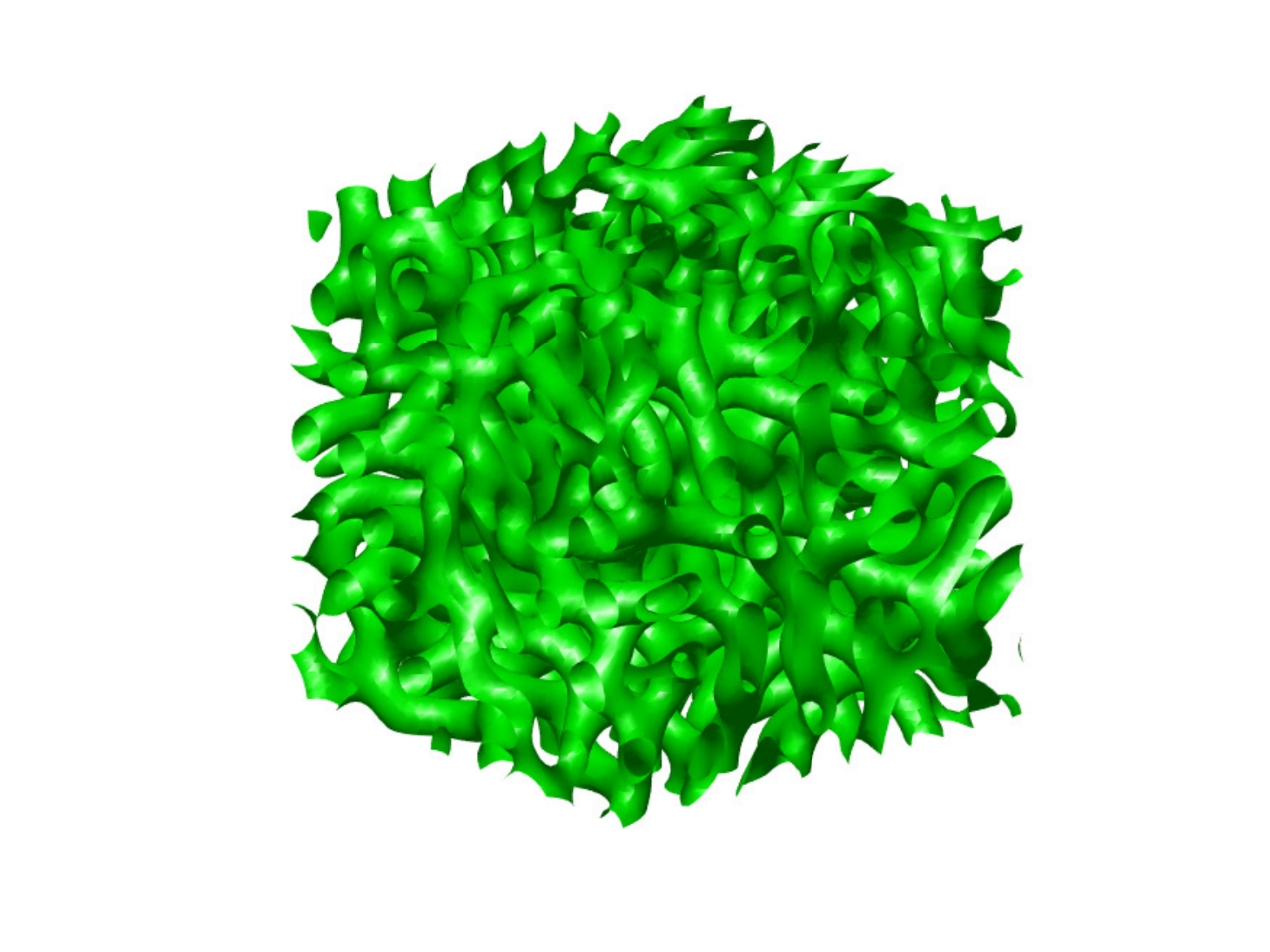}
		
	}
	\subfigure[$\bar{\phi}=0.35$]{
		%   \hspace{-9mm}
		\includegraphics[width=5.3cm]{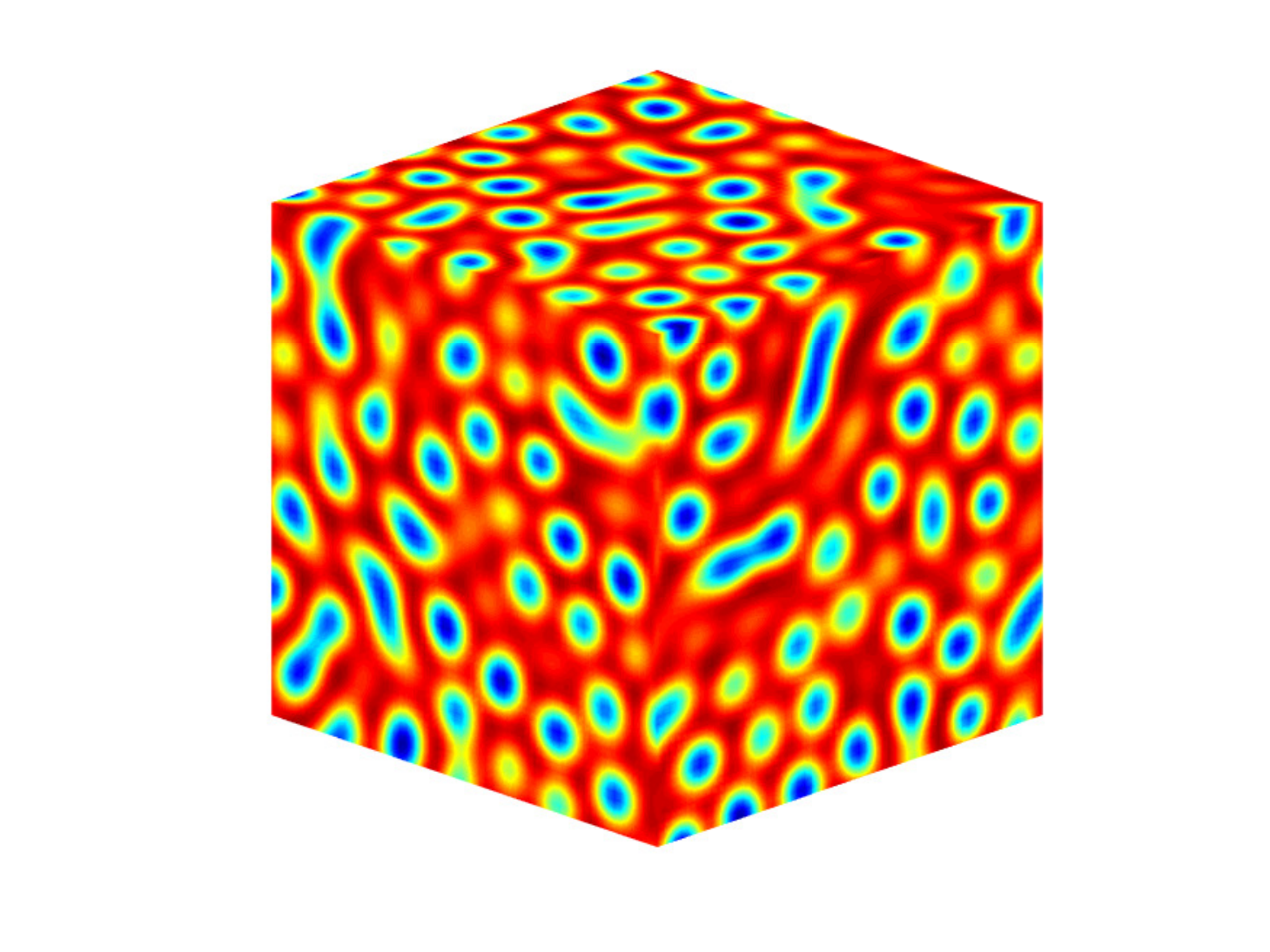}
		\includegraphics[width=5.3cm]{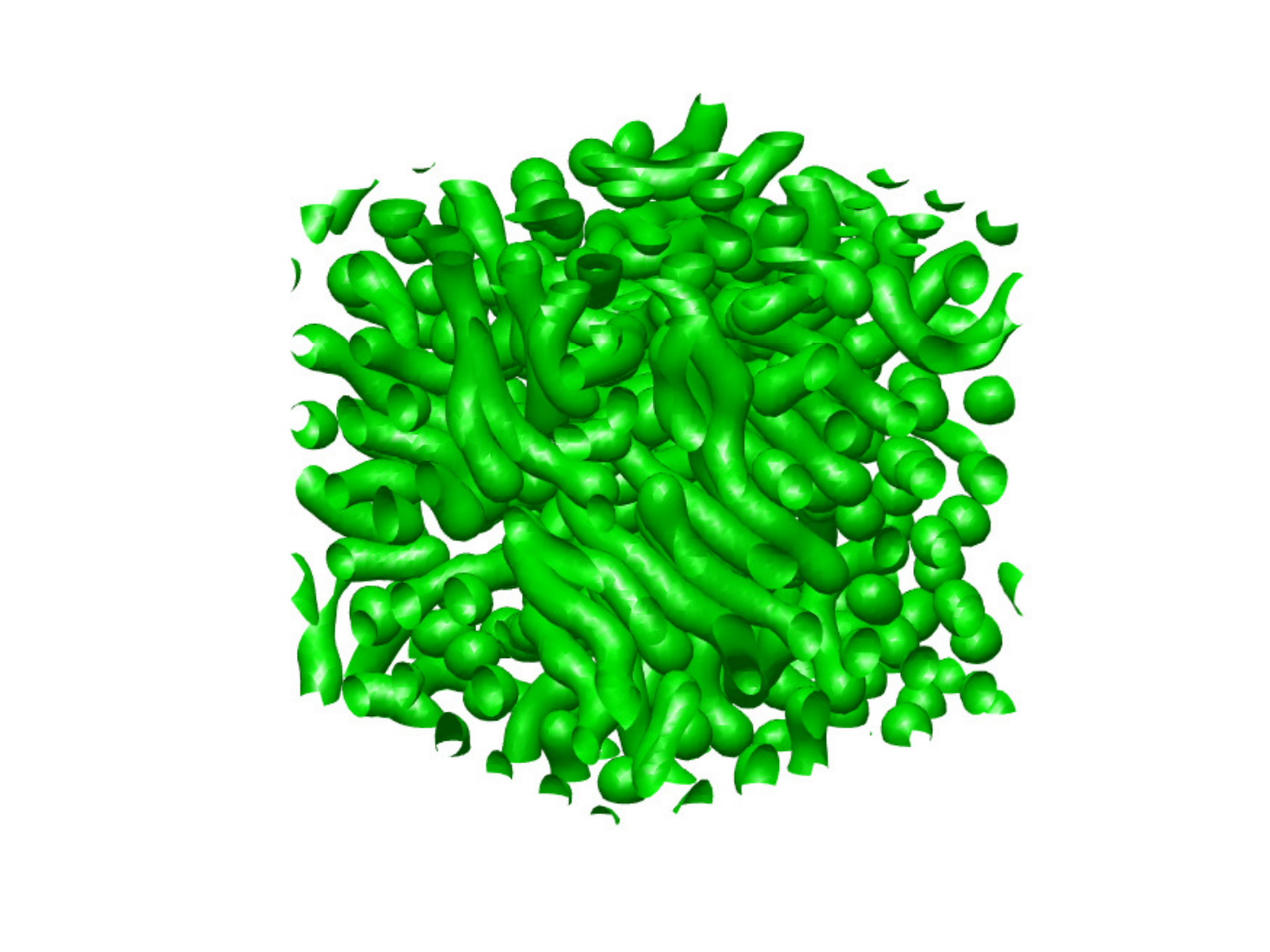}
	}
	\subfigure[$\bar{\phi}=0.43$]{
		\includegraphics[width=5.3cm]{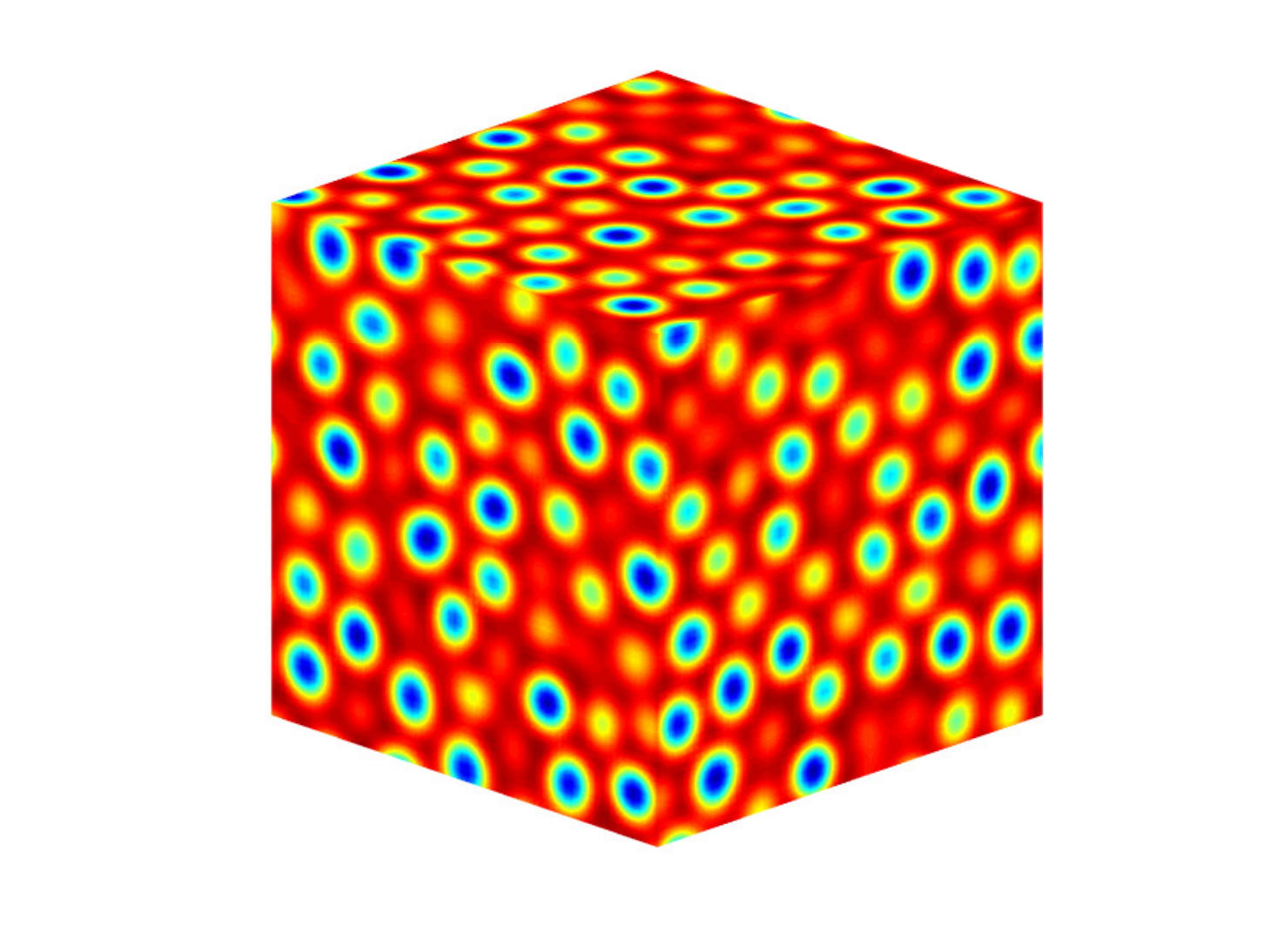}
		\includegraphics[width=5.3cm]{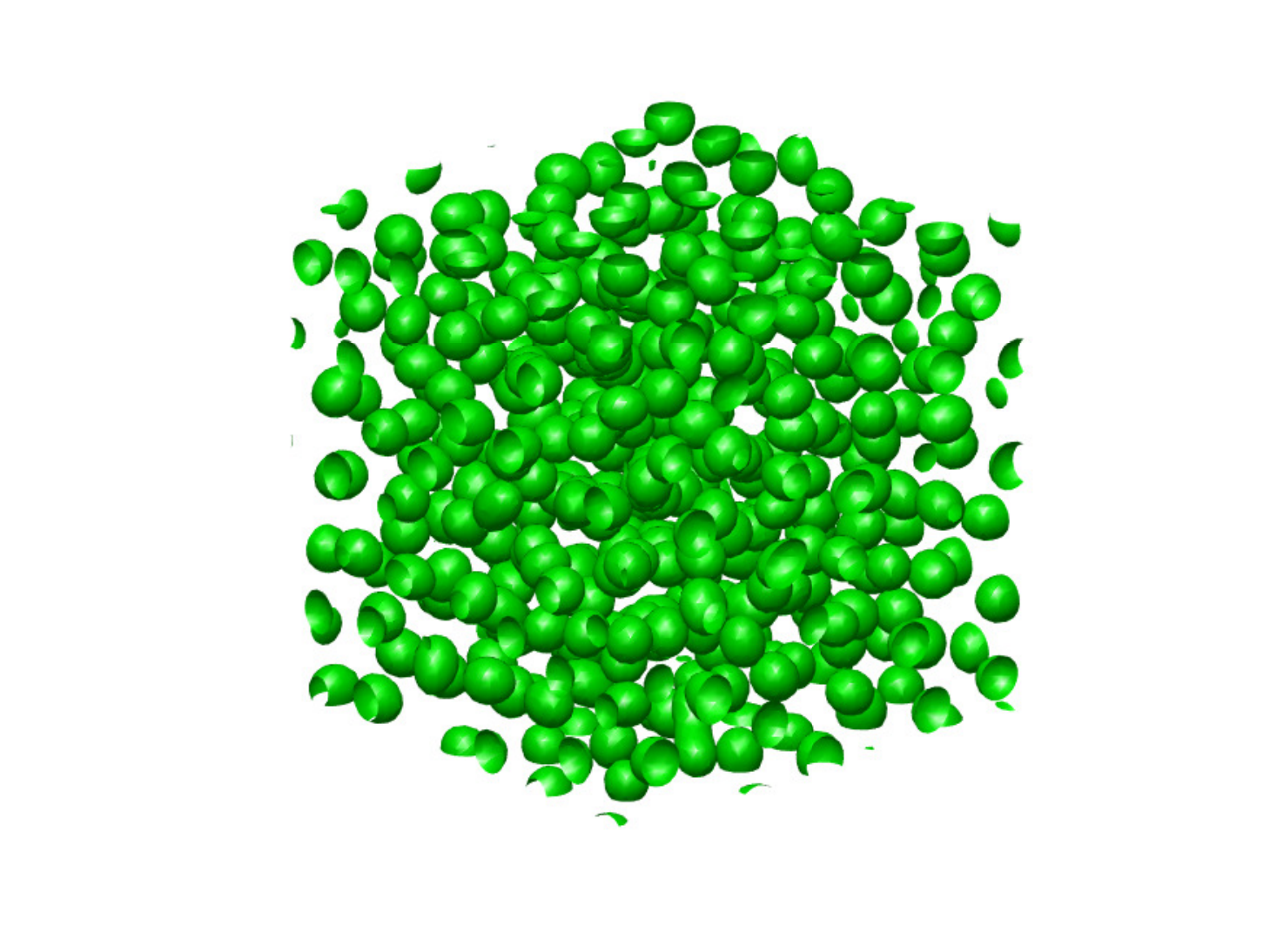}}
	\caption{Example\,\ref{ex:PFC}(Case B). Snapshots of density field $\phi$ (left) and isosurface plots of $\phi=0$ (right) driven by the PFC equation using EOP-GSAV/BDF$2$ scheme at $T=3000$.}
	\label{Fig:PFC-emGSAV-rand-3D}
\end{figure}
\end{example}	

\begin{example}\label{ex:NS}
	\rm
In this numerical example, we evaluate the Navier-Stokes equation by using EOP-GSAV/BDF$k$ ($k=1,2,3,4$) schemes. The Navier-Stokes equation is a well-known dissipative system that can be defined as follows
\begin{equation}\label{eq:Navier-Stokes}
	\left\{\begin{aligned}
		& \frac{\partial \mathbf{u}}{\partial t}-\nu \Delta \mathbf{u}+(\mathbf{u} \cdot \nabla) \mathbf{u}+\nabla p =\mathbf{0} & & \text { in } \Omega \times \mathcal{T} ,\\
		& \nabla \cdot \mathbf{u} =0 & & \text { in } \Omega \times \mathcal{T}, \\
		& \mathbf{u} =\mathbf{0} & & \text { on } \partial \Omega \times \mathcal{T}.
	\end{aligned}\right.
\end{equation}
Let $\Omega$ be an open bounded domain in $\mathbb{R}^{d}$ with a sufficiently smooth boundary $\partial \Omega$, and let $\mathcal{T}=(0, T]$. The unknown velocity and pressure are denoted by $\mathbf{u}$ and $p$, respectively. $\nu>0$ is the viscosity coefficient, and $\mathbf{n}$ is the unit outward normal of the domain $\Omega$.
The system \eqref{eq:Navier-Stokes} satisfies the following law
\begin{equation}\label{eq:Navier-Stokes-energy-law}
	\frac{\mathrm{d}}{\mathrm{d} t} E(\mathbf{u})=-\nu\|\nabla\mathbf{u}\|^{2},
\end{equation}
where $E(\mathbf{u})=\frac{1}{2}\|\mathbf{u}\|^{2}$ is the total energy.

In the case of periodic boundary condition, the operators $\nabla , \nabla \cdot$ and $\Delta^{-1}$ can commute with each other by defining them in the Fourier space.
Applying the divergence operator to both sides of the first equation in \eqref{eq:Navier-Stokes}, we obtain
\begin{equation}
	-\Delta p=\nabla \cdot(\mathbf{u} \cdot \nabla \mathbf{u}),
\end{equation}
then we can derive that
\begin{equation*}
	\begin{aligned}
		\nabla p & =\nabla \Delta^{-1} \Delta p \\
		&=-\nabla \Delta^{-1} \nabla \cdot(\mathbf{u} \cdot \nabla \mathbf{u}) \\
		& =-\nabla \nabla \cdot \Delta^{-1}(\mathbf{u} \cdot \nabla \mathbf{u})\\
		&=-(\Delta+\nabla \times \nabla \times) \Delta^{-1}(\mathbf{u} \cdot \nabla \mathbf{u}) \\
		& =-\mathbf{u} \cdot \nabla \mathbf{u}-\nabla \times \nabla \times \Delta^{-1}(\mathbf{u} \cdot \nabla \mathbf{u})\\
		&=-\boldsymbol{u} \cdot \nabla \mathbf{u}-\mathbf{J}(\mathbf{u} \cdot \nabla \mathbf{u}),
	\end{aligned}
\end{equation*}
where $\mathbf{J}$ is defined by
\begin{equation*}
	\mathbf{J} \mathbf{v}:=\nabla \times \nabla \times \Delta^{-1} \mathbf{v} \quad \forall \mathbf{v} \in \mathbf{L}_{0}^{2}(\Omega).
\end{equation*}
Then the first equation of \eqref{eq:Navier-Stokes} can be rewritten as
\begin{equation*}
	\frac{\partial \mathbf{u}}{\partial t}-\nu \Delta \mathbf{u}-\mathbf{J}(\mathbf{u} \cdot \nabla \mathbf{u})=\mathbf{0}.
\end{equation*}

We introduce a SAV, $R(t)=\mathcal{E}(\mathbf{u})=E(\mathbf{u})+C_0$, with $C_0 \geq 0$, then the time discretization can be constructed as the following form:

\textbf{Step I}: Solve solution $\left(\mathbf{u}^{n+1}, p^{n+1}, \tilde R^{n+1}\right)$:
\begin{equation*}
	\begin{aligned}
		& \frac{\alpha_k \overline{\mathbf{u}}^{n+1}-A_k\left(\overline{\mathbf{u}}^n\right)}{\Delta t}-\nu \Delta \overline{\mathbf{u}}^{n+1}-\mathbf{J}\left(B_k\left(\mathbf{u}^n\right) \cdot \nabla B_k\left(\mathbf{u}^n\right)\right)=0, \\
		& \frac{\left(\tilde{R}^{n+1}-R^n\right)}{\Delta t}=-\nu \frac{\tilde{R}^{n+1}}{E\left(\overline{\mathbf{u}}^{n+1}\right)+C_0}\left\|\nabla \overline{\mathbf{u}}^{n+1}\right\|^2, \\
		& \xi^{n+1}=\frac{\tilde{R}^{n+1}}{E\left(\overline{\mathbf{u}}^{n+1}\right)+C_0}, \\
		& \mathbf{u}^{n+1}=\eta_k^{n+1} \overline{\mathbf{u}}^{n+1} \text { with } \eta_k^{n+1}=1-\left(1-\xi^{n+1}\right)^k, \\
		& \Delta p^{n+1}=-\nabla \cdot\left(\mathbf{u}^{n+1} \cdot \nabla \mathbf{u}^{n+1}\right),
	\end{aligned}
\end{equation*}

\textbf{Step II}: Update the scalar auxiliary variable $R^{n+1}$ via
\begin{equation*}
	R^{n+1}=\min\left\{R^n,\mathcal{E}(\mathbf{u}^{n+1})\right\}.
\end{equation*}
	
{\em Case A.}
We begin by conducting an accuracy test, where the right-hand side is computed based on the following analytical solution
\begin{equation*}
	\begin{aligned}
		& u_1(x, y)=\pi \exp (\sin (\pi x)) \exp (\sin (\pi y)) \cos (\pi y) \sin ^2(t), \\
		& u_2(x, y)=-\pi \exp (\sin (\pi x)) \exp (\sin (\pi y)) \cos (\pi x) \sin ^2(t), \\
		& p(x, y)=\exp (\cos (\pi x) \sin (\pi y)) \sin ^2(t).
	\end{aligned}
\end{equation*}
For this simulation, we consider the computational domain $\Omega=(0,2)^2$, with $\nu=1$ and a computational time interval from $t=2$ to $t=3$. We adopt a spatial discretization scheme using $N^2=40^2$ Fourier modes. The $L^{2}$-norm errors for EOP-GSAV/BDF$k$ schemes with $k=1,2,3,4$ are presented in Fig.\,\ref{Fig:NS-order-test}, demonstrating the expected convergence rates of the numerical scheme.
	
\begin{figure}[htbp]
	\centering
	\begin{minipage}{0.4\textwidth}
		\centering
		\includegraphics[width=5.3cm]{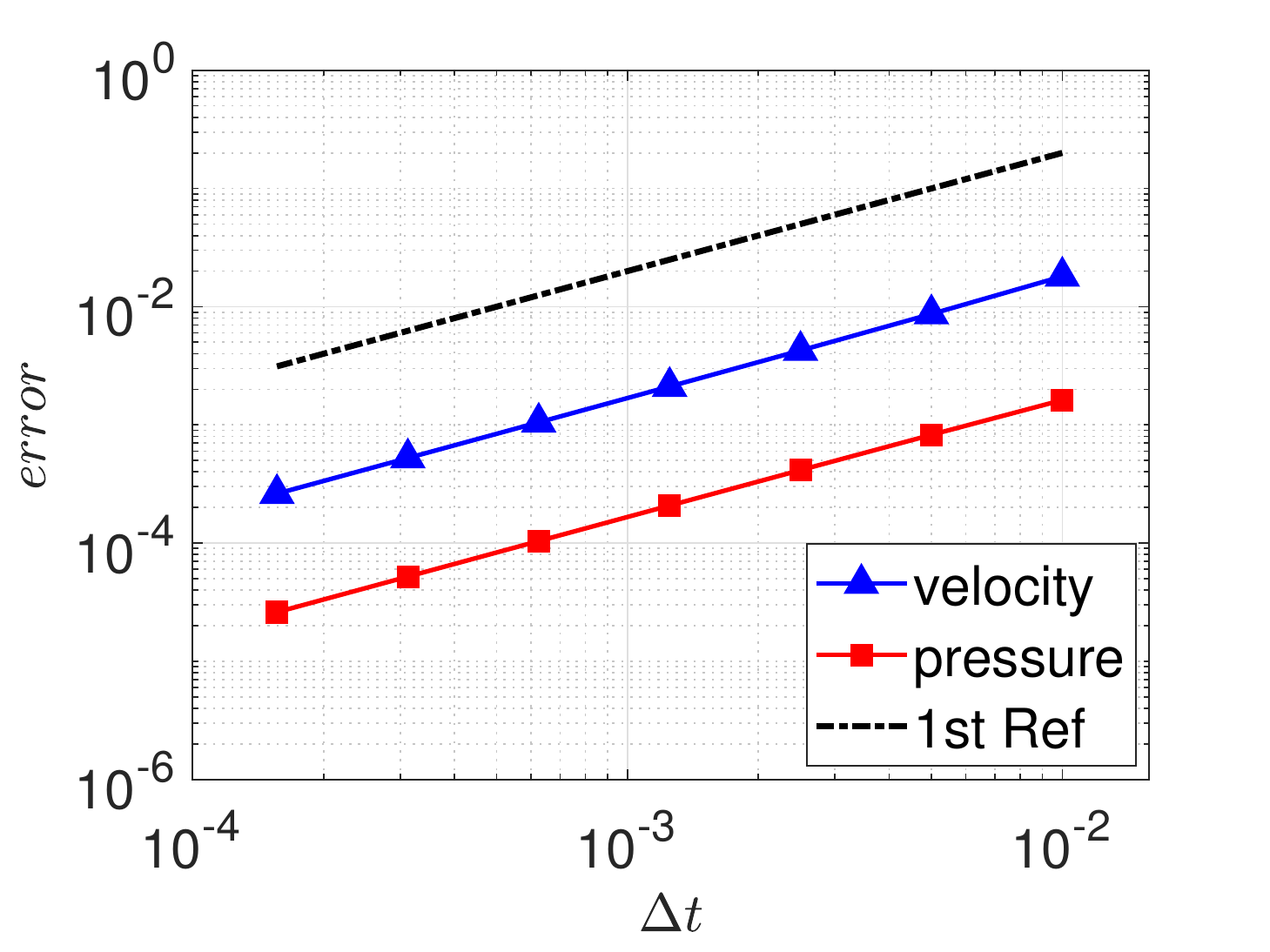}
	\end{minipage}	
	\begin{minipage}{0.4\textwidth}
		\centering
		\includegraphics[width=5.3cm]{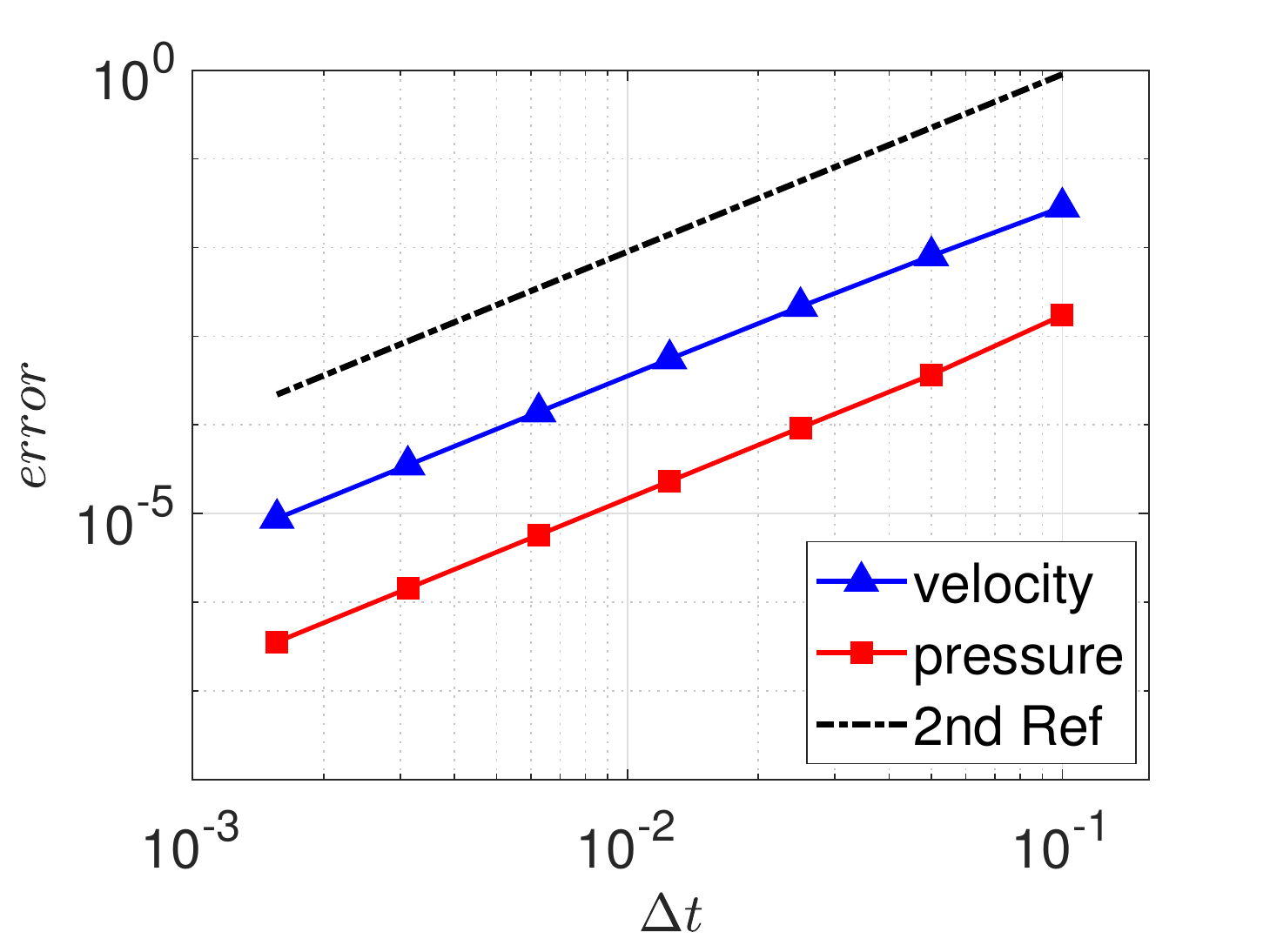}
	\end{minipage}
	\begin{minipage}{0.4\textwidth}
		\centering
		\includegraphics[width=5.3cm]{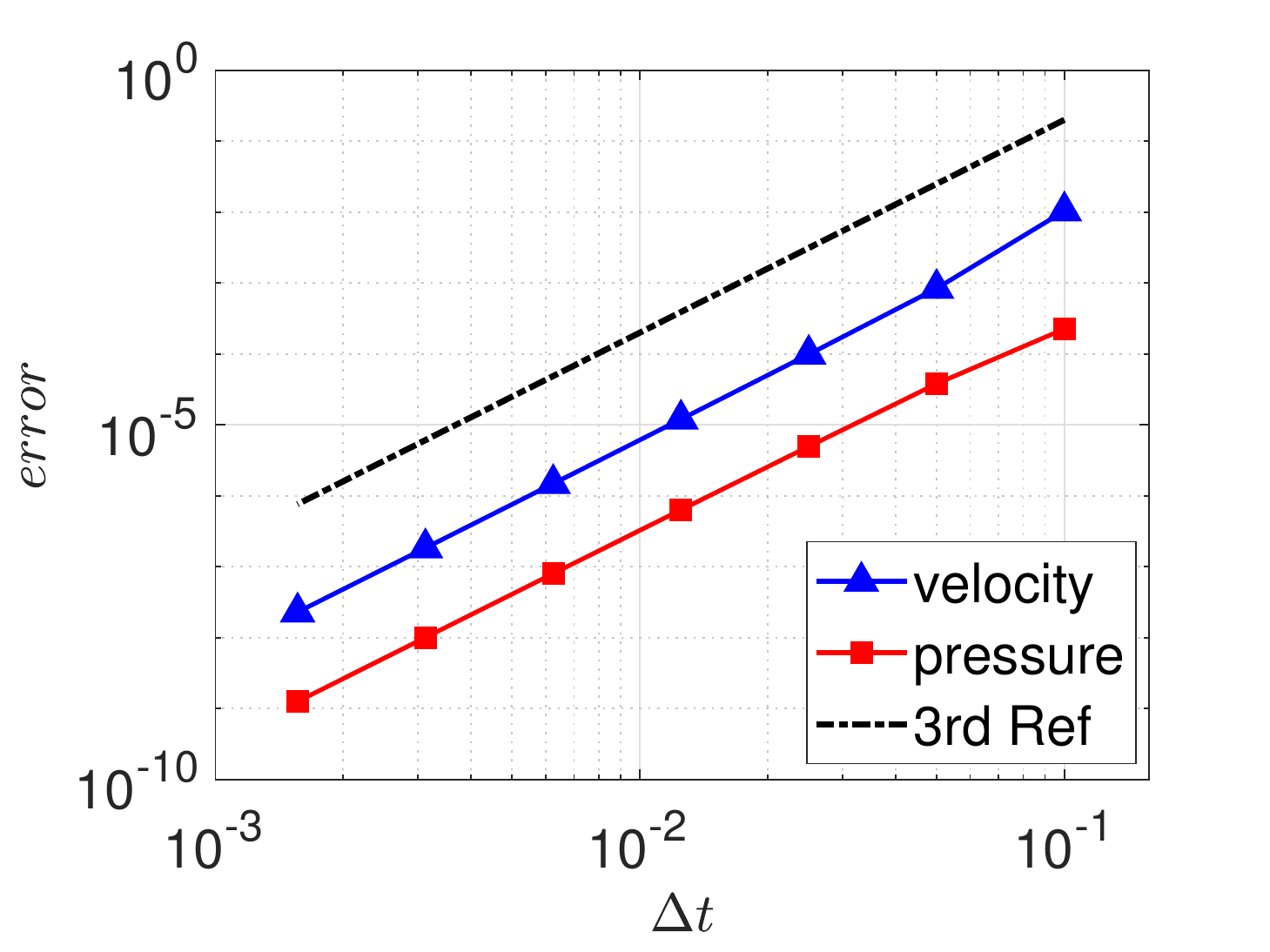}
	\end{minipage}
	\begin{minipage}{0.4\textwidth}
		\centering
		\includegraphics[width=5.3cm]{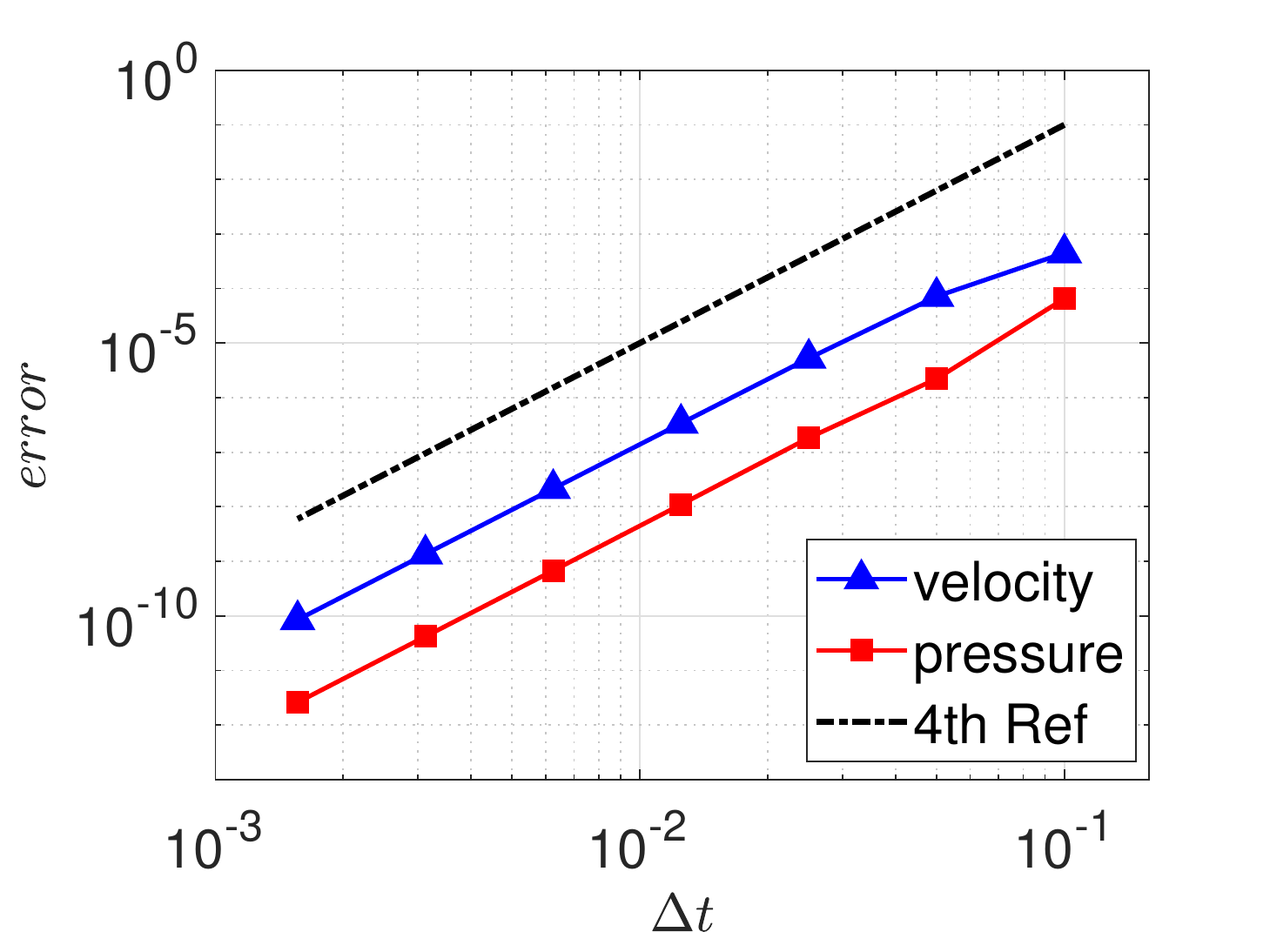}
	\end{minipage}
	\caption{Example\,\ref{ex:NS}(Case A). Convergence rates for Navier-Stokes equation using EOP-GSAV/BDF$k$, $(k=1,2,3,4)$ schemes.}
	\label{Fig:NS-order-test}
\end{figure}

{\em Case B.}  Next we simulate double shear layer problem.
We consider the initial condition as follows
\begin{equation*}
	\begin{array}{l}u_{1}(x, y, 0)=\left\{\begin{array}{l}\tanh (\rho(y-0.25)), y \leq 0.5, \\
			\tanh (\rho(0.75-y)), y>0.5,
		\end{array}\right. \\
		u_{2}(x, y, 0)=\epsilon \sin (2 \pi x).
	\end{array}
\end{equation*}
Here, we consider the double shear layer problem in the Navier-Stokes equation, where the parameter $\rho$ represents the width of the shear layer and $\epsilon$ denotes the size of the perturbation.
We set $\epsilon=0.05$ and choose a computational domain of $\Omega=(0,1)^2$ for the simulations.
The evolution of vorticity contours with $\rho=30$, $\nu=1e-4$, $N^2=128^2$, and a time step of $\Delta t = 6e-4$ obtained using the EOP-GSAV/BDF$2$ scheme is depicted in Fig.\,\ref{Fig:NS-double-shear-1}.
The results illustrate that the vortex gradually increases over time.
We also simulate a more challenging case with $\rho=100$, $\nu=5e-4$, $N^2=256^2$, and a time step of $\Delta t = 2e-4$, as shown in Fig.\,\ref{Fig:NS-double-shear-2}.
Fig.\,\ref{Fig:NS-double-shear-E-difference} displays the evolution of the difference between the original energy and the modified energy for these two cases.
We also test the Navier-Stokes equation using $\rho=30$, $\nu=5e-4$, $N^2=128^2$, and a time step of $\Delta t = 6.7e-4$. The vorticity contours at $T=1.2$ using EOP-GSAV/BDF$k$ with $k=1,2,3,4$ schemes are presented in Fig.\,\ref{Fig:NS-double-shear-comparison}.
The results show that the BDF$3$ and BDF$4$ schemes yield correct solutions, while the BDF$1$ scheme leads to a completely wrong result and the BDF$2$ scheme produces inaccurate results. This numerical phenomenon highlights the superiority of high-order schemes.

	\begin{figure}[htbp]
	\centering
	\begin{minipage}{0.3\textwidth}
		\centering
		\includegraphics[width=5.3cm]{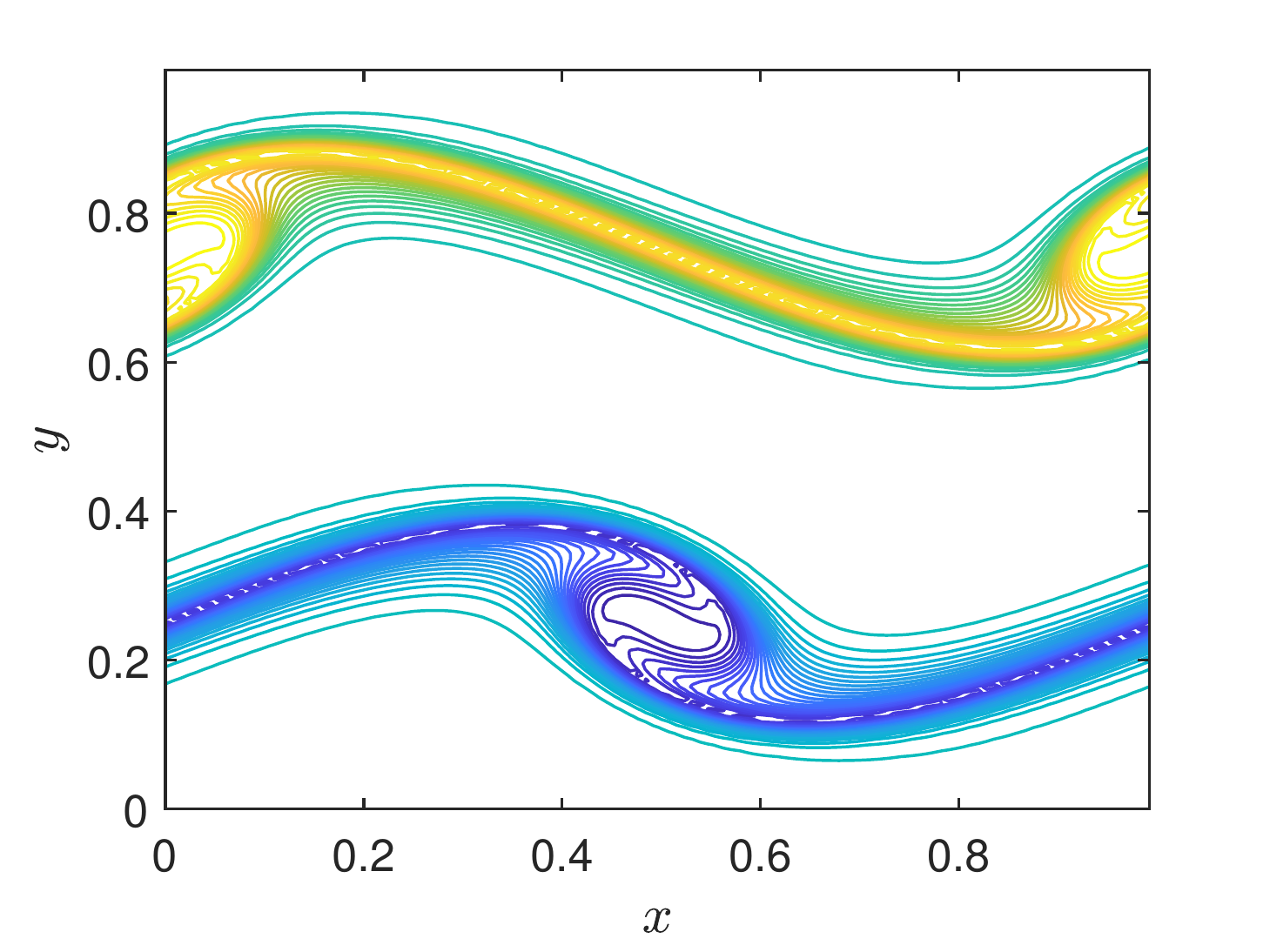}
	\end{minipage}
	\begin{minipage}{0.3\textwidth}
		\centering
		\includegraphics[width=5.3cm]{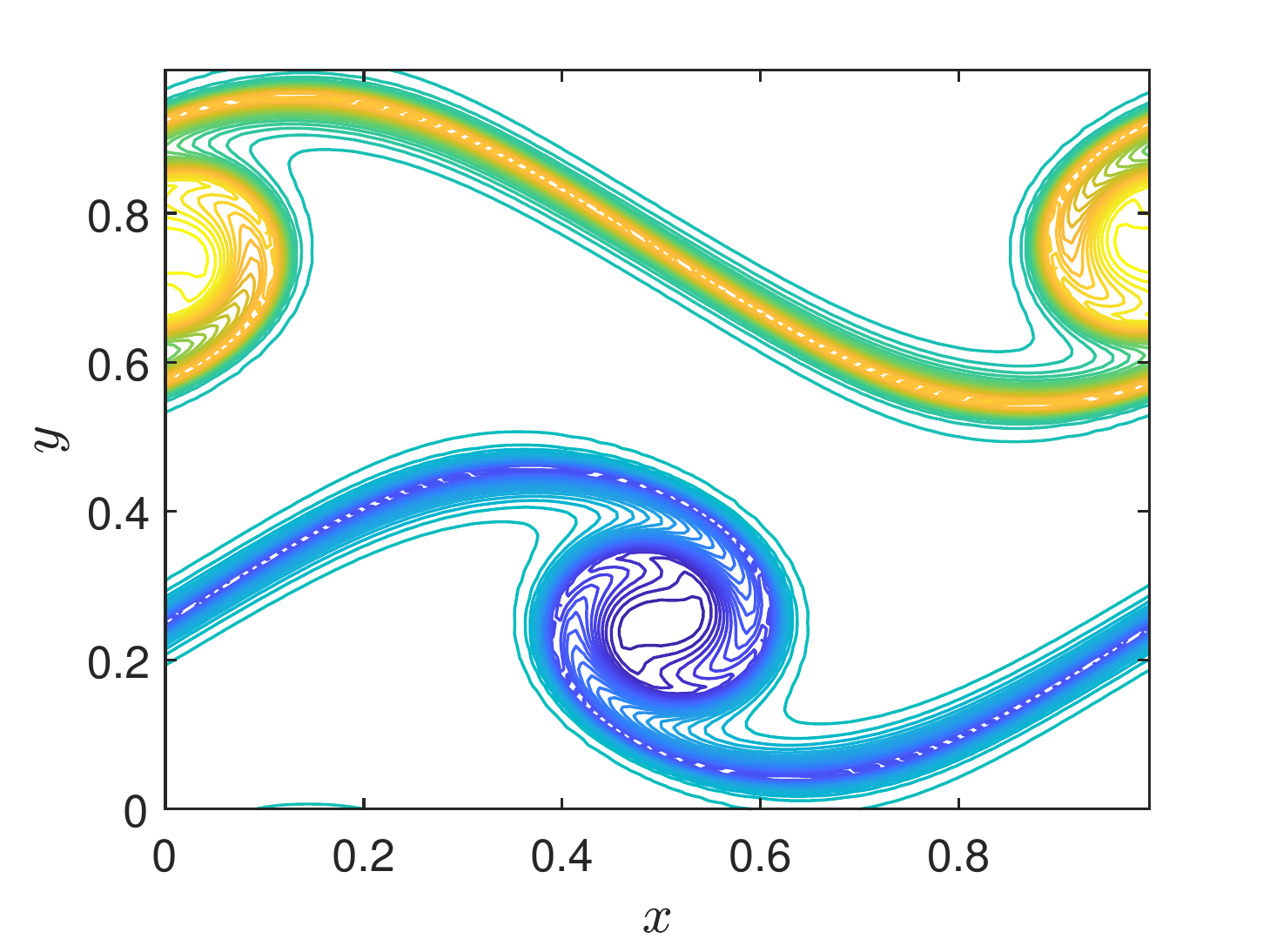}
	\end{minipage}
	\begin{minipage}{0.3\textwidth}
		\centering
		\includegraphics[width=5.3cm]{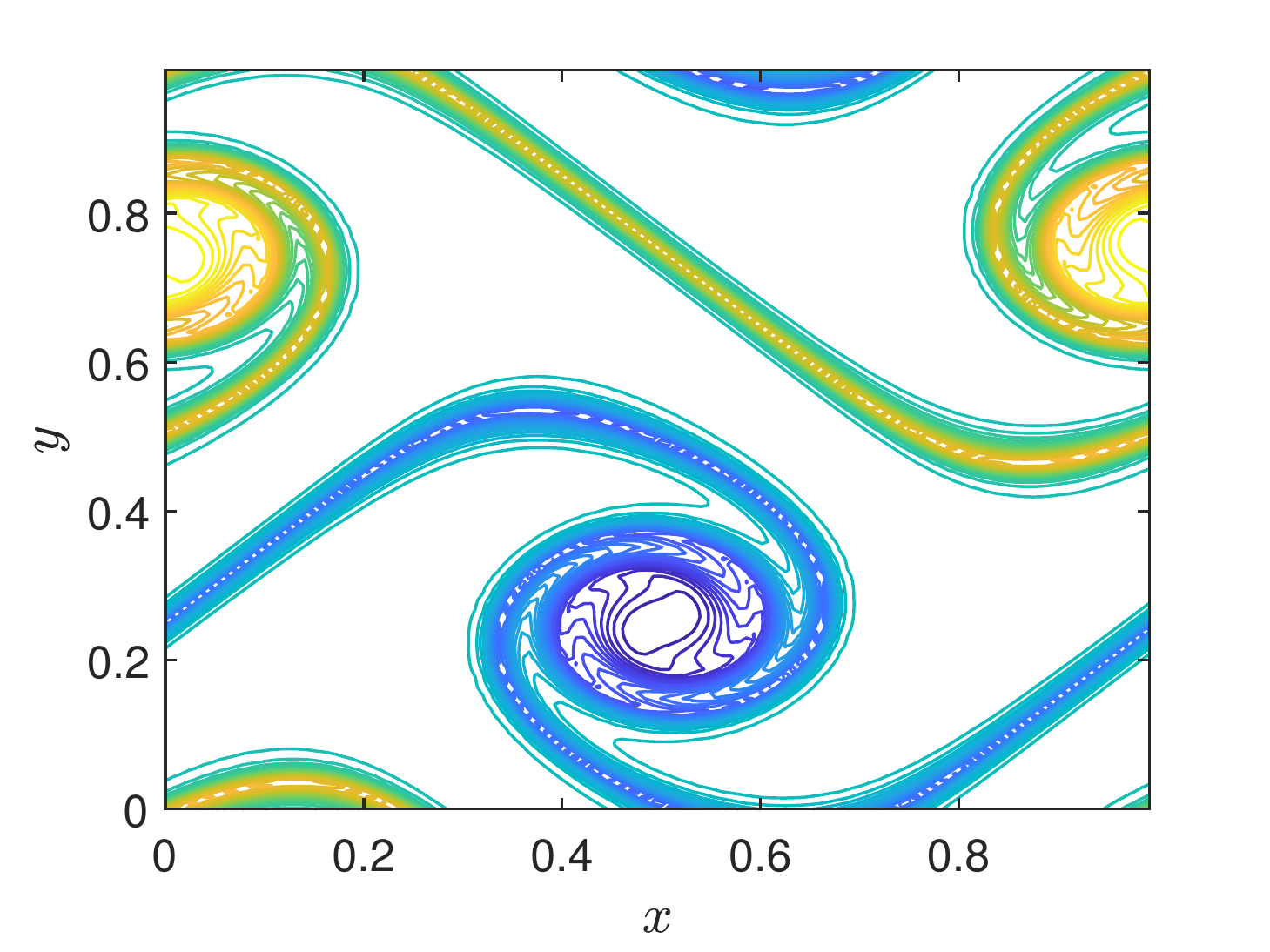}
	\end{minipage}
	\caption{Example\,\ref{ex:NS}(Case B). The evolution of vorticity of Navier-Stokes equation with $\rho = 30, \nu  = 1e-4$, using EOP-GSAV/BDF$2$ scheme at $T=0.8, 1, 1.2$.}
	\label{Fig:NS-double-shear-1}
\end{figure}

	\begin{figure}[htbp]
	\centering
	\begin{minipage}{0.3\textwidth}
		\centering
		\includegraphics[width=5.3cm]{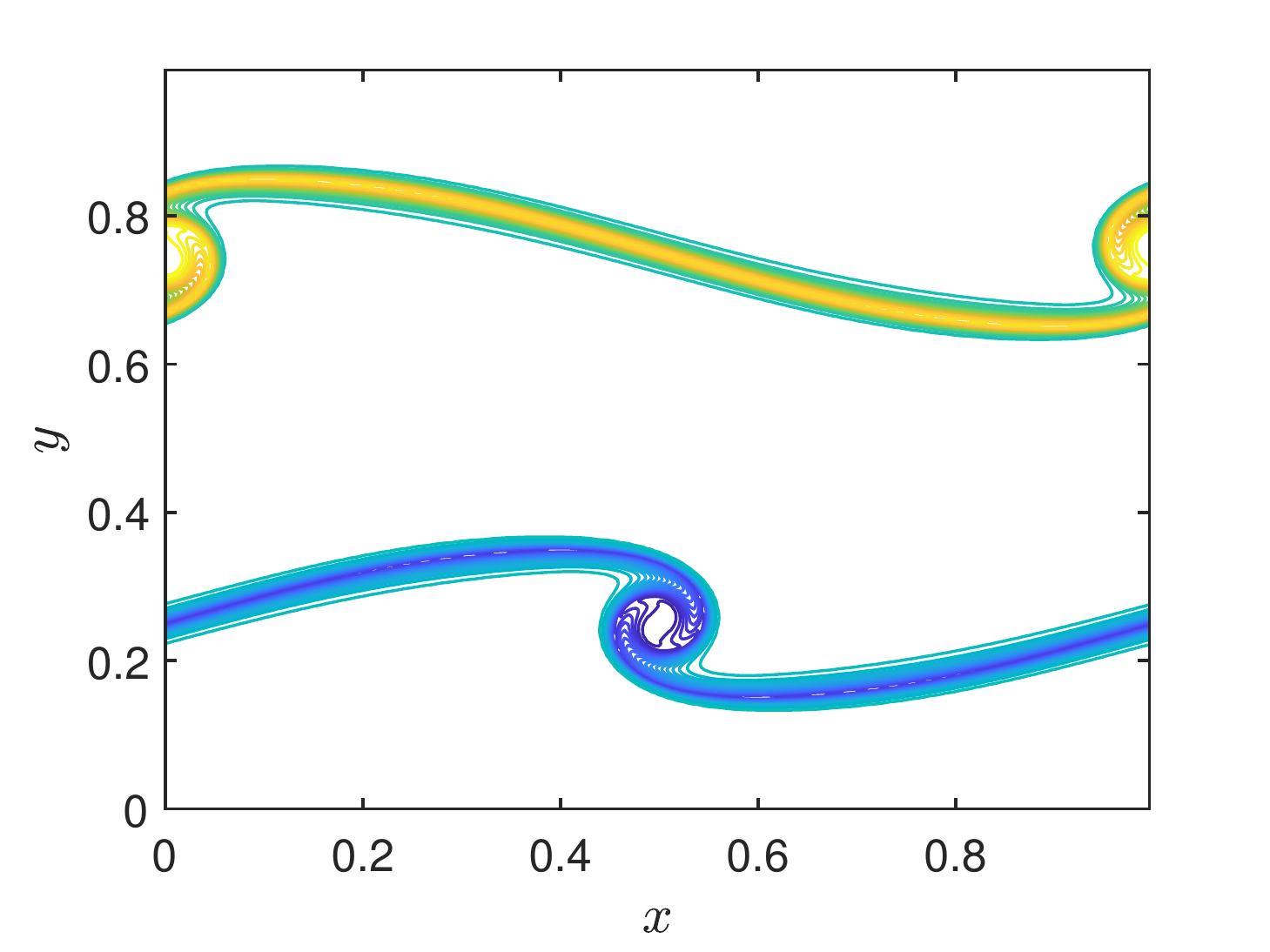}
	\end{minipage}
	\begin{minipage}{0.3\textwidth}
		\centering
		\includegraphics[width=5.3cm]{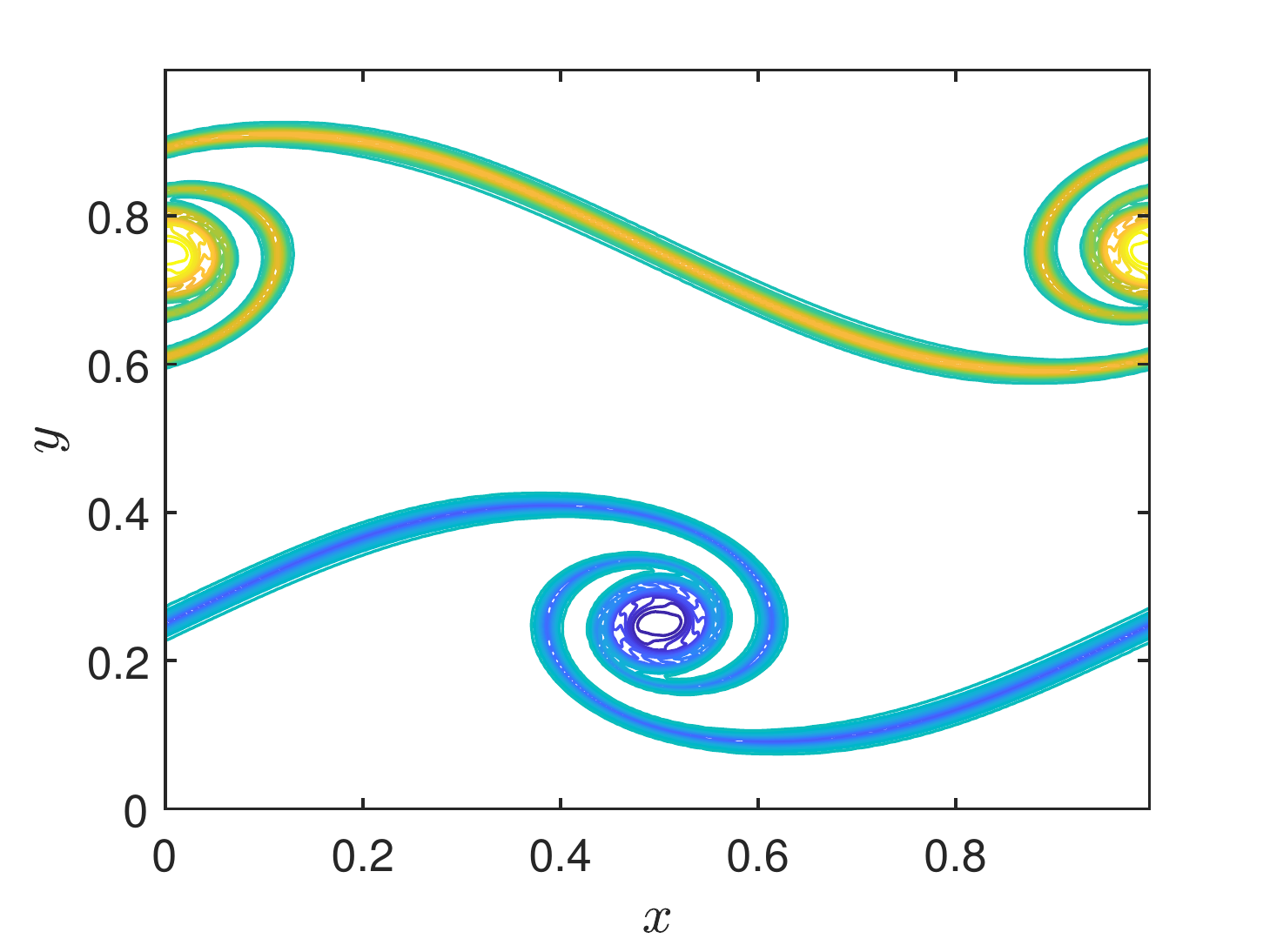}
	\end{minipage}
	\begin{minipage}{0.3\textwidth}
		\centering
		\includegraphics[width=5.3cm]{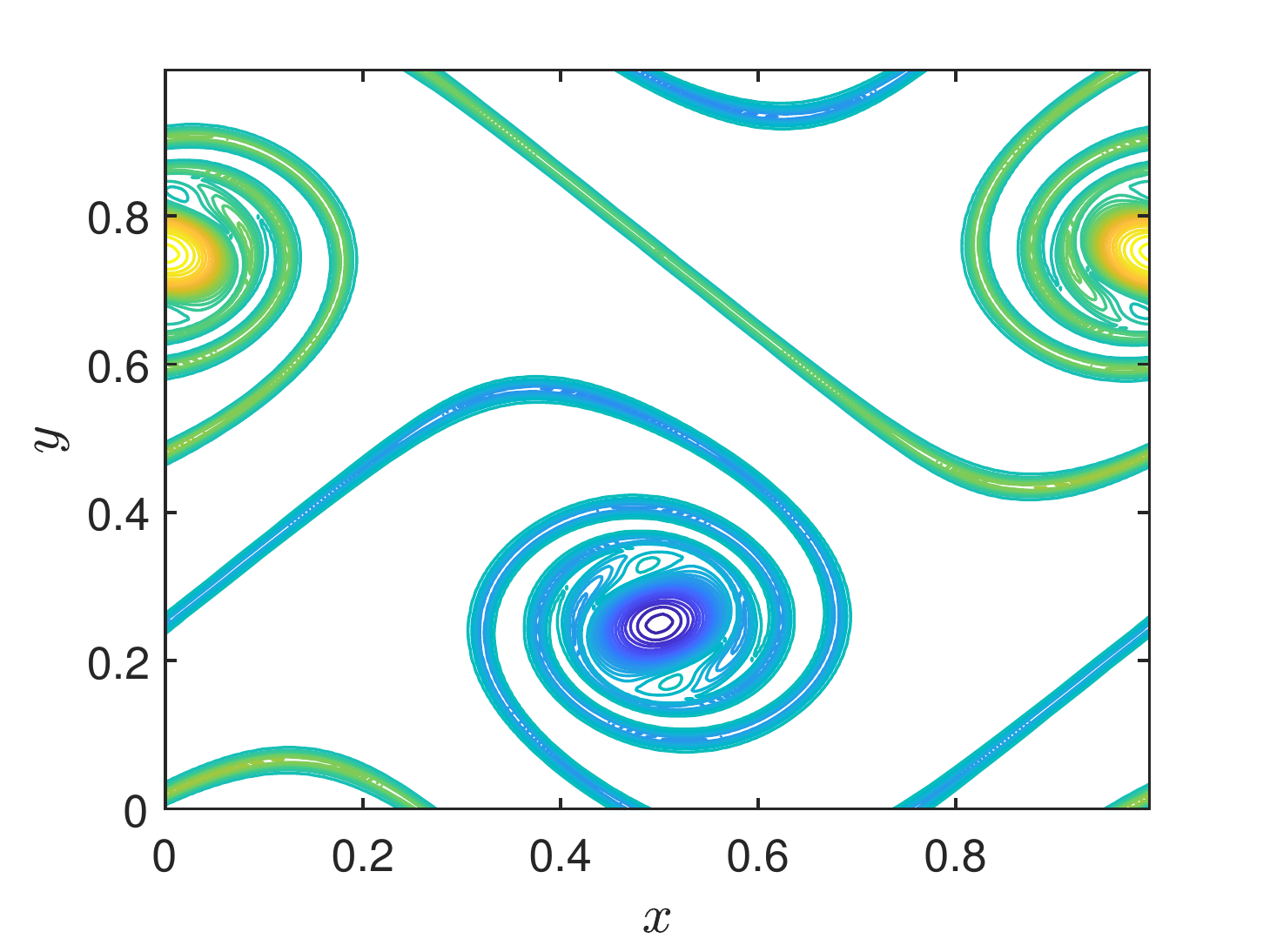}
	\end{minipage}
	\caption{Example\,\ref{ex:NS}(Case B). The evolution of vorticity of Navier-Stokes equation with $\rho = 100, \nu  = 5e-5$ using EOP-GSAV/BDF$2$ scheme at $T=0.6, 0.8, 1.2$.}
	\label{Fig:NS-double-shear-2}
\end{figure}

	\begin{figure}[htbp]
	\centering
	\begin{minipage}{0.4\textwidth}
		\centering
		\includegraphics[width=5.3cm]{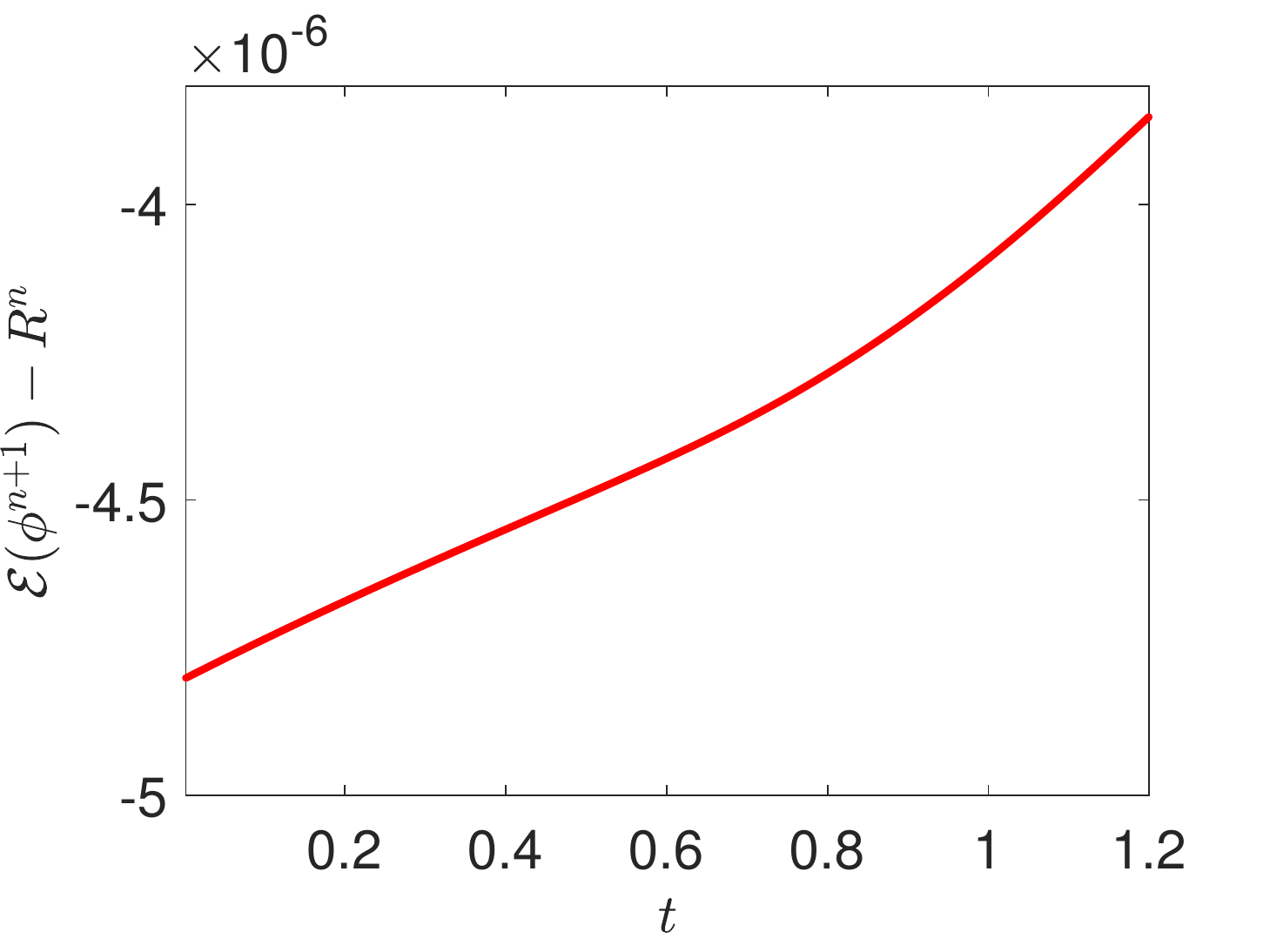}
	\end{minipage}
	\begin{minipage}{0.4\textwidth}
		\centering
		\includegraphics[width=5.3cm]{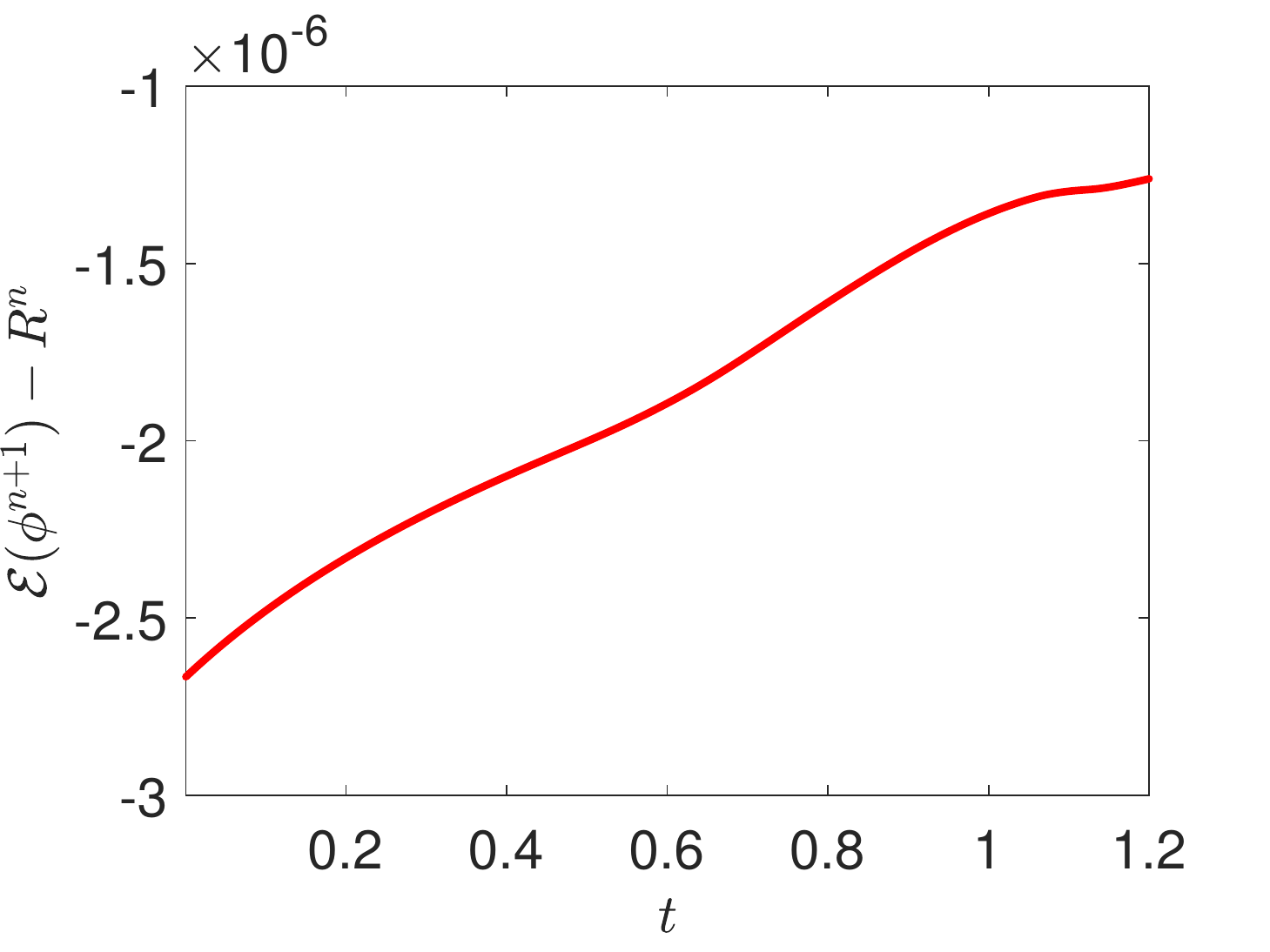}
	\end{minipage}
	\caption{Example\,\ref{ex:NS}(Case B). Evolution of the difference between the original energy and the modified energy using EOP-GSAV/BDF$2$ scheme.}
	\label{Fig:NS-double-shear-E-difference}
\end{figure}

\begin{figure}[htbp]
	\centering
	\begin{minipage}{0.4\textwidth}
		\centering
		\includegraphics[width=5.3cm]{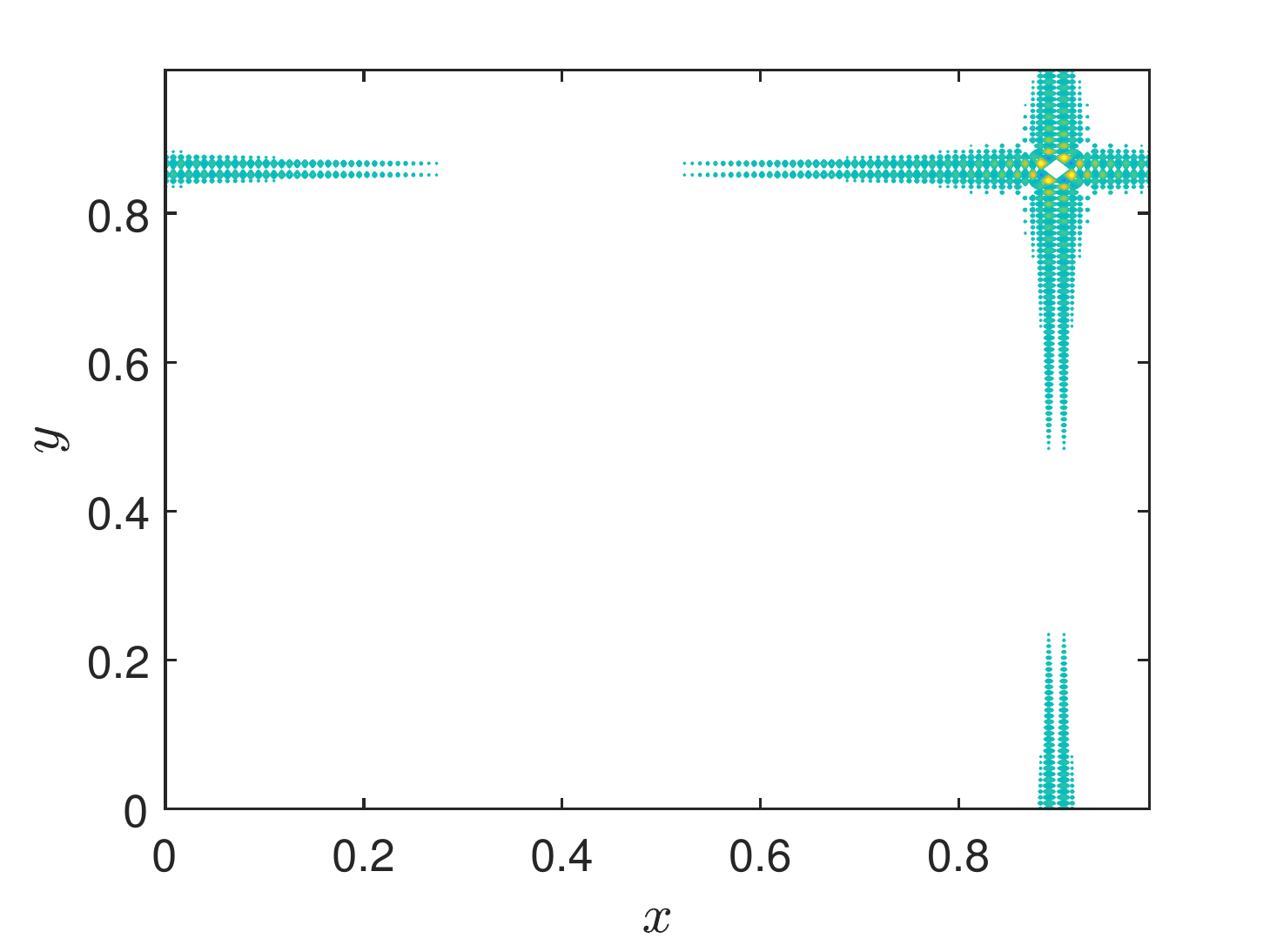}
	\end{minipage}	
	\begin{minipage}{0.4\textwidth}
		\centering
		\includegraphics[width=5.3cm]{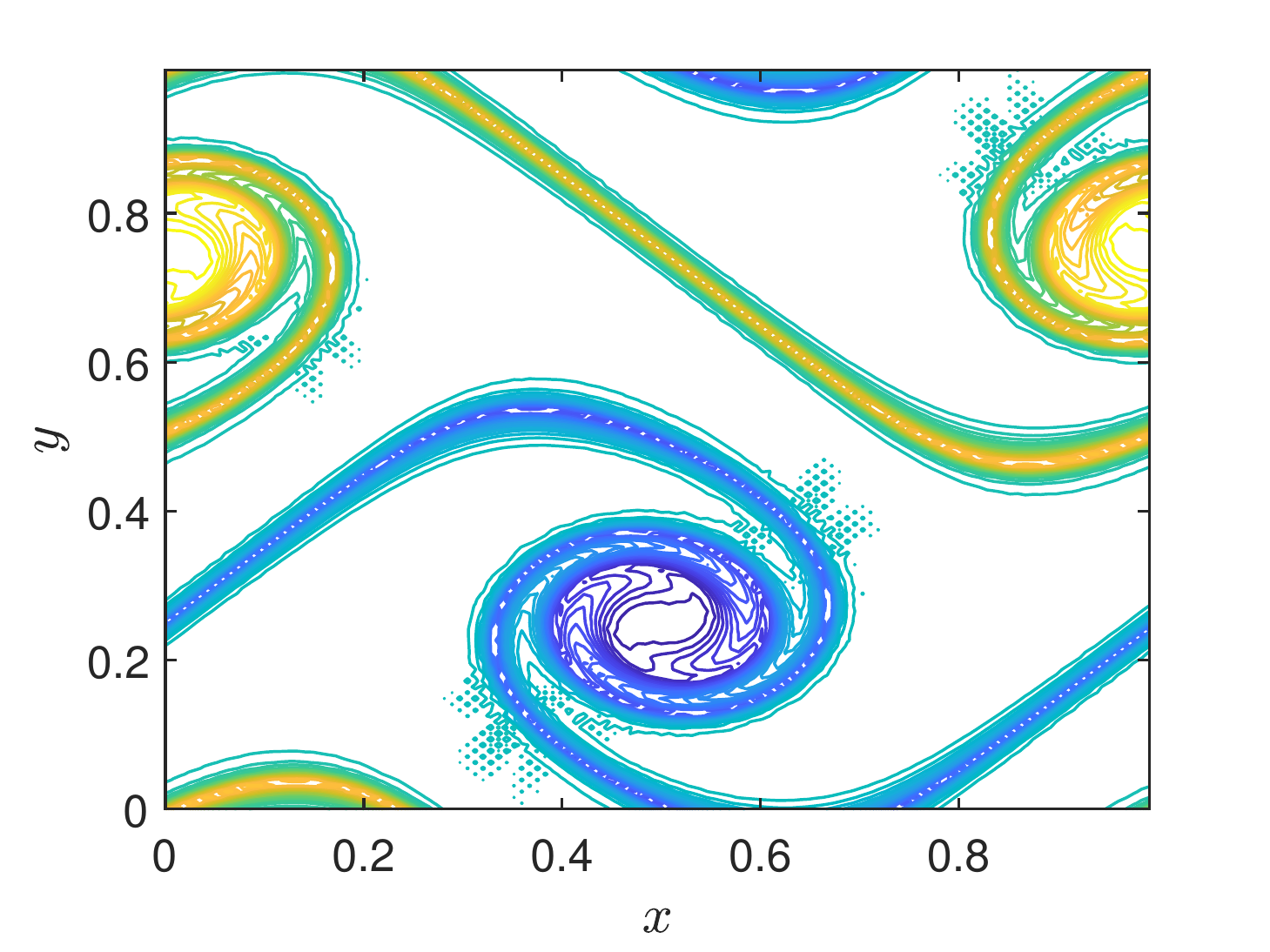}
	\end{minipage}
	\begin{minipage}{0.4\textwidth}
		\centering
		\includegraphics[width=5.3cm]{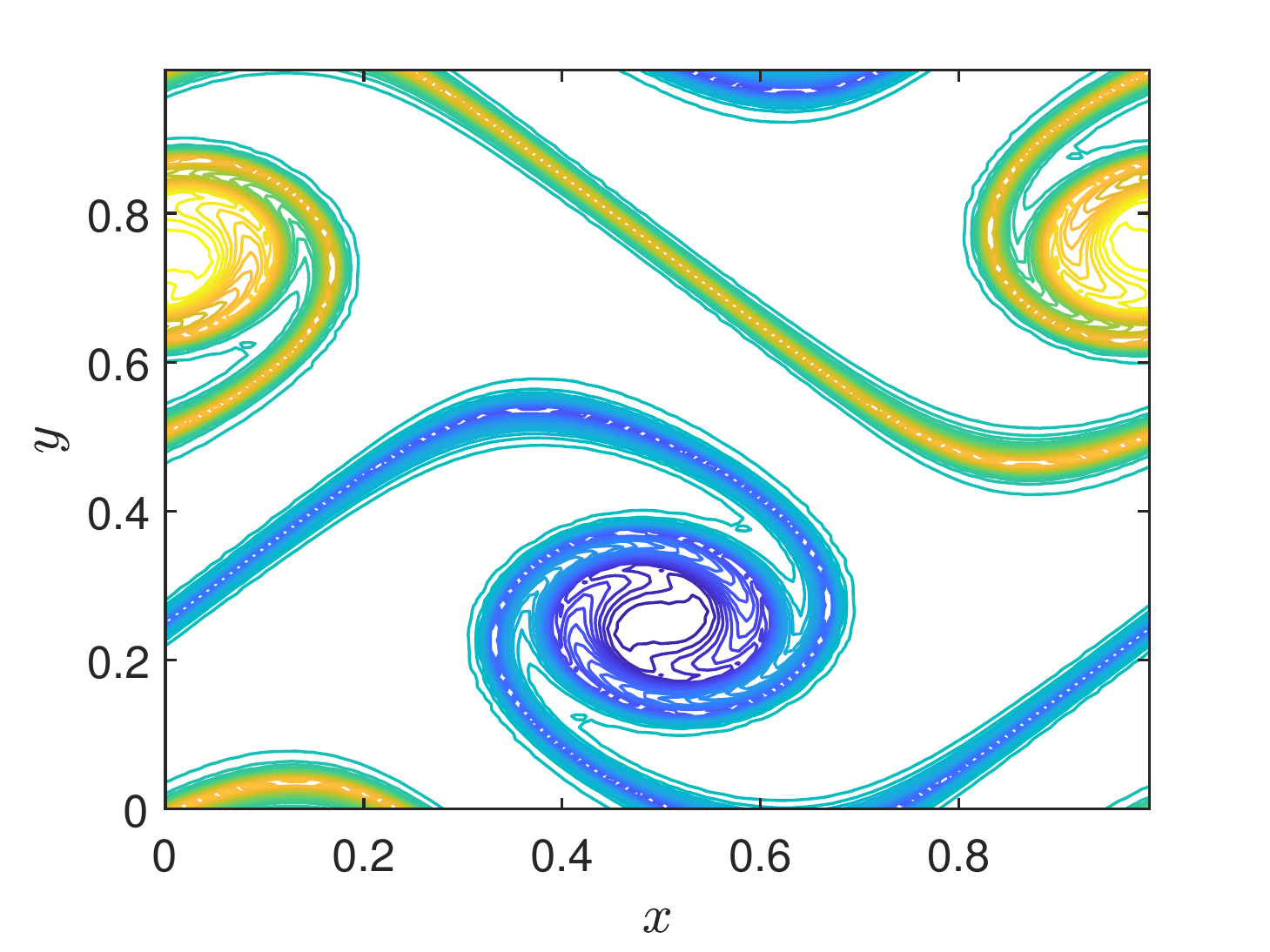}
	\end{minipage}
	\begin{minipage}{0.4\textwidth}
		\centering
		\includegraphics[width=5.3cm]{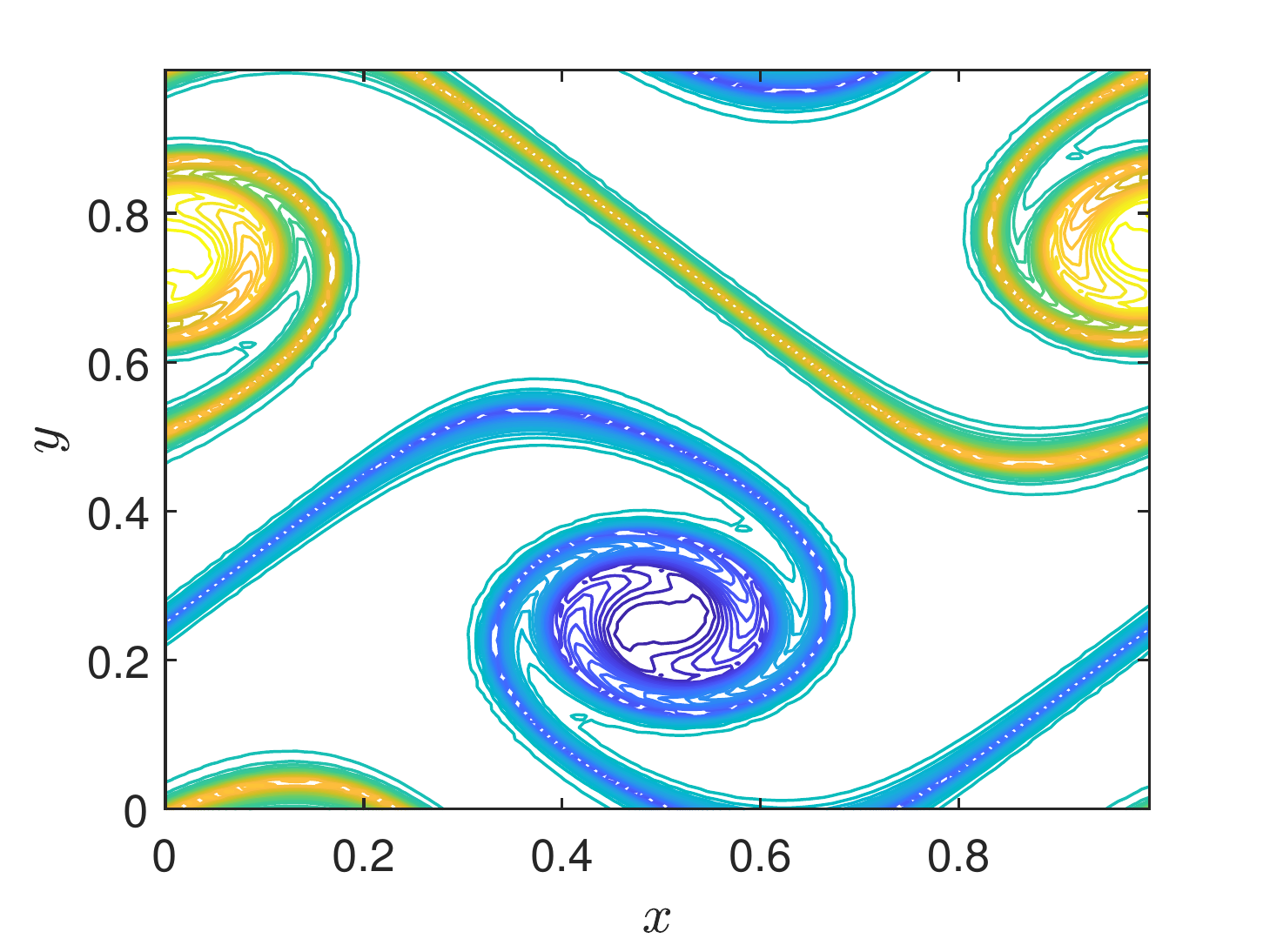}
	\end{minipage}
	\caption{Example\,\ref{ex:NS}(Case B). The evolution of vorticity of Navier-Stokes equation with $\rho=30, \nu=5e-5$ using EOP-GSAV/BDF$k$, ($k=1, 2, 3, 4$) schemes.}
	\label{Fig:NS-double-shear-comparison}
\end{figure}

\end{example}
\section*{Acknowledgement}
No potential conflict of interest was reported by the author. We would like to acknowledge the assistance of volunteers in putting together this example manuscript and supplement.
% \section*{References}
\bibliographystyle{siamplain}
\bibliography{Reference}

\begin{thebibliography}{10}

\bibitem{antoine2021scalar}
{\sc X.~Antoine, J.~Shen, and Q.~Tang}, {\em Scalar auxiliary
  variable/{L}agrange multiplier based pseudospectral schemes for the dynamics
  of nonlinear {S}chr{\"o}dinger/{G}ross-{P}itaevskii equations}, Journal of
  Computational Physics, 437 (2021), p.~110328.

\bibitem{baskaran2013convergence}
{\sc A.~Baskaran, J.~S. Lowengrub, C.~Wang, and S.~M. Wise}, {\em Convergence
  analysis of a second order convex splitting scheme for the modified phase
  field crystal equation}, SIAM Journal on Numerical Analysis, 51 (2013),
  pp.~2851--2873.

\bibitem{chen1998applications}
{\sc L.~Q. Chen and J.~Shen}, {\em Applications of semi-implicit
  {F}ourier-spectral method to phase field equations}, Computer Physics
  Communications, 108 (1998), pp.~147--158.

\bibitem{cheng2018multiple}
{\sc Q.~Cheng and J.~Shen}, {\em Multiple scalar auxiliary variable ({MSAV})
  approach and its application to the phase-field vesicle membrane model}, SIAM
  Journal on Scientific Computing, 40 (2018), pp.~A3982--A4006.

\bibitem{du2019maximum}
{\sc Q.~Du, L.~Ju, X.~Li, and Z.~Qiao}, {\em Maximum principle preserving
  exponential time differencing schemes for the nonlocal {A}llen--{C}ahn
  equation}, SIAM Journal on numerical analysis, 57 (2019), pp.~875--898.

\bibitem{du2021maximum}
{\sc Q.~Du, L.~Ju, X.~Li, and Z.~Qiao}, {\em Maximum bound principles for a
  class of semilinear parabolic equations and exponential time-differencing
  schemes}, SIAM Review, 63 (2021), pp.~317--359.

\bibitem{eyre1998unconditionally}
{\sc D.~J. Eyre}, {\em Unconditionally gradient stable time marching the
  {C}ahn-{H}illiard equation}, MRS Online Proceedings Library (OPL), 529
  (1998), p.~39.

\bibitem{hou2021robust}
{\sc D.~Hou and C.~Xu}, {\em Robust and stable schemes for time fractional
  molecular beam epitaxial growth model using {SAV} approach}, Journal of
  Computational Physics, 445 (2021), p.~110628.

\bibitem{jiang2022improving}
{\sc M.~Jiang, Z.~Zhang, and J.~Zhao}, {\em Improving the accuracy and
  consistency of the scalar auxiliary variable ({SAV}) method with relaxation},
  Journal of Computational Physics, 456 (2022), p.~110954.

\bibitem{ju2022stabilized}
{\sc L.~Ju, X.~Li, and Z.~Qiao}, {\em Stabilized exponential-{SAV} schemes
  preserving energy dissipation law and maximum bound principle for the
  {A}llen--{C}ahn type equations}, Journal of Scientific Computing, 92 (2022),
  p.~66.

\bibitem{ju2018energy}
{\sc L.~Ju, X.~Li, Z.~Qiao, and H.~Zhang}, {\em Energy stability and error
  estimates of exponential time differencing schemes for the epitaxial growth
  model without slope selection}, Mathematics of Computation, 87 (2018),
  pp.~1859--1885.

\bibitem{li2019efficient}
{\sc Q.~Li, L.~Mei, X.~Yang, and Y.~Li}, {\em Efficient numerical schemes with
  unconditional energy stabilities for the modified phase field crystal
  equation}, Advances in Computational Mathematics, 45 (2019), pp.~1551--1580.

\bibitem{li2020stability}
{\sc X.~Li and J.~Shen}, {\em Stability and error estimates of the {SAV}
  fourier-spectral method for the phase field crystal equation}, Adv Comput
  Math, 46 (2020), p.~48.

\bibitem{li2022stability}
{\sc X.~Li, W.~Wang, and J.~Shen}, {\em Stability and error analysis of {IMEX}
  {SAV} schemes for the magneto-hydrodynamic equations}, SIAM Journal on
  Numerical Analysis, 60 (2022), pp.~1026--1054.

\bibitem{lin2019numerical}
{\sc L.~Lin, Z.~Yang, and S.~Dong}, {\em Numerical approximation of
  incompressible {N}avier-{S}tokes equations based on an auxiliary energy
  variable}, Journal of Computational Physics, 388 (2019), pp.~1--22.

\bibitem{liu2020exponential}
{\sc Z.~Liu and X.~Li}, {\em The exponential scalar auxiliary variable
  ({E-SAV}) approach for phase field models and its explicit computing}, SIAM
  Journal on Scientific Computing, 42 (2020), pp.~B630--B655.

\bibitem{shen2018scalar}
{\sc J.~Shen, J.~Xu, and J.~Yang}, {\em The scalar auxiliary variable ({SAV})
  approach for gradient flows}, Journal of Computational Physics, 353 (2018),
  pp.~407--416.

\bibitem{shen2019new}
{\sc J.~Shen, J.~Xu, and J.~Yang}, {\em A new class of efficient and robust
  energy stable schemes for gradient flows}, SIAM Review, 61 (2019),
  pp.~474--506.

\bibitem{shen2010numerical}
{\sc J.~Shen and X.~Yang}, {\em Numerical approximations of {A}llen-{C}ahn and
  {C}ahn-{H}illiard equations}, Discrete Contin. Dyn. Syst, 28 (2010),
  pp.~1669--1691.

\bibitem{xu2006stability}
{\sc C.~Xu and T.~Tang}, {\em Stability analysis of large time-stepping methods
  for epitaxial growth models}, SIAM Journal on Numerical Analysis, 44 (2006),
  pp.~1759--1779.

\bibitem{yang2017linearly}
{\sc X.~Yang and D.~Han}, {\em Linearly first-and second-order, unconditionally
  energy stable schemes for the phase field crystal model}, Journal of
  Computational Physics, 330 (2017), pp.~1116--1134.

\bibitem{yang2018linear}
{\sc X.~Yang, J.~Zhao, and X.~He}, {\em Linear, second order and
  unconditionally energy stable schemes for the viscous {C}ahn--{H}illiard
  equation with hyperbolic relaxation using the invariant energy quadratization
  method}, Journal of Computational and Applied Mathematics, 343 (2018),
  pp.~80--97.

\bibitem{yang2017numerical}
{\sc X.~Yang, J.~Zhao, and Q.~Wang}, {\em Numerical approximations for the
  molecular beam epitaxial growth model based on the invariant energy
  quadratization method}, Journal of Computational Physics, 333 (2017),
  pp.~104--127.

\bibitem{yang2020roadmap}
{\sc Z.~Yang and S.~Dong}, {\em A roadmap for discretely energy-stable schemes
  for dissipative systems based on a generalized auxiliary variable with
  guaranteed positivity}, Journal of Computational Physics, 404 (2020),
  p.~109121.

\bibitem{zhang2022generalized}
{\sc Y.~Zhang and J.~Shen}, {\em A generalized {SAV} approach with relaxation
  for dissipative systems}, Journal of Computational Physics,  (2022),
  p.~111311.

\bibitem{zhao2017numerical}
{\sc J.~Zhao, Q.~Wang, and X.~Yang}, {\em Numerical approximations for a phase
  field dendritic crystal growth model based on the invariant energy
  quadratization approach}, International Journal for Numerical Methods in
  Engineering, 110 (2017), pp.~279--300.

\end{thebibliography}

\end{document}